\documentclass[12pt,reqno]{amsart}
\usepackage{amssymb,amscd}
\usepackage[alphabetic]{amsrefs}
\usepackage[all]{xy}
\usepackage[headings]{fullpage}
\usepackage[T1]{fontenc}
\usepackage{libertine}

\numberwithin{equation}{section}

\swapnumbers
\theoremstyle{plain}
\newtheorem{thm}[subsection]{Theorem}

\newtheorem{prop}[subsection]{Proposition}
\newtheorem{propss}[subsubsection]{Proposition}
\newtheorem{lemma}[subsubsection]{Lemma}
\newtheorem{cor}[subsection]{Corollary}
\newtheorem{corss}[subsubsection]{Corollary}

\theoremstyle{definition}

\theoremstyle{remark}






\usepackage[OT2,T1]{fontenc}
\DeclareSymbolFont{cyrletters}{OT2}{wncyr}{m}{n}
\DeclareMathSymbol{\sha}{\mathalpha}{cyrletters}{"58}


\newcommand{\CC}{\mathcal{C}}

\newcommand{\DD}{\mathcal{D}}

\renewcommand{\SS}{\mathcal{S}}

\newcommand{\WW}{{\mathcal{W}}}

\newcommand{\EE}{\mathcal{E}}

\newcommand{\NN}{\mathcal{N}}

\newcommand{\F}{\mathbb{F}}
\newcommand{\Fp}{{\mathbb{F}_p}}

\newcommand{\Fq}{{\mathbb{F}_q}}

\newcommand{\Fqtimes}{{\mathbb{F}_q^\times}}

\newcommand{\Fpbar}{{\overline{\mathbb{F}}_p}}

\newcommand{\Qbar}{{\overline{\mathbb{Q}}}}
\newcommand{\Zbar}{{\overline{\mathbb{Z}}}}

\newcommand{\ratto}{{\dashrightarrow}}

\newcommand{\Qlbar}{{\overline{\mathbb{Q}}_\ell}}

\newcommand{\Qpbar}{{\overline{\mathbb{Q}}_p}}

\newcommand{\Z}{\mathbb{Z}}

\newcommand{\Q}{\mathbb{Q}}

\newcommand{\C}{\mathbb{C}}
\newcommand{\A}{\mathbb{A}}
\renewcommand{\P}{\mathbb{P}}



\newcommand{\K}{K}

\newcommand{\<}{\langle}
\renewcommand{\>}{\rangle}

\newcommand{\tensor}{\otimes}
\newcommand{\compose}{\circ}
\newcommand{\sdp}{{\rtimes}}
\newcommand{\nodiv}{\not|}
\def\nodiv{\mathrel{\mathchoice{\not|}{\not|}{\kern-.2em\not\kern.2em|}
{\kern-.2em\not\kern.2em|}}}


\DeclareMathOperator{\Tr}{Tr}

\DeclareMathOperator{\N}{N}

\DeclareMathOperator{\ord}{ord}
\DeclareMathOperator{\rk}{Rank}

\DeclareMathOperator{\Hom}{Hom}

\DeclareMathOperator{\Reg}{Reg}

\DeclareMathOperator{\Br}{Br}

\DeclareMathOperator{\lcm}{lcm}

\DeclareMathOperator{\Fr}{Fr}

\def\clap#1{\hbox to 0pt{\hss#1\hss}}

\DeclareMathOperator{\BS}{BS}
\newcommand{\Frr}{{\mathbb{F}_r}} 

\newcommand{\gP}{\mathfrak{P}}
\newcommand{\teich}{\mathbf{t}}
\newcommand{\II}{\mathbf{II}}
\newcommand{\trivcar}{\mathbf{1}}
\newcommand{\hght}{ht}
\newcommand{\ee}{{e}}
\renewcommand{\setminus}{\smallsetminus}

\theoremstyle{plain}
\newtheorem{lemmas}[subsection]{Lemma} 

\begin{document}
\title[Sextic Artin--Schreier twists]
{On the arithmetic of a family of \\ 
twisted constant elliptic curves}

\author{Richard \textsc{Griffon}}
\address{Departement Mathematik und Info., Universit\"at Basel,  Spiegelgasse 1, CH-4051 Basel, Switzerland}
\email{richard.griffon@unibas.ch}

\author{Douglas \textsc{Ulmer}}
\address{Department of Mathematics, University of Arizona, Tucson, AZ 85721 USA}
\email{ulmer@math.arizona.edu}

\date{\today}

\begin{abstract}
Let $\F_r$ be a finite field of characteristic $p>3$.
For any power $q$ of $p$, consider the elliptic curve $E=E_{q,r}$ defined by $y^2=x^3 + t^q -t$ over $K=\F_r(t)$.
We describe several arithmetic invariants of $E$ such as the rank of its Mordell--Weil group $E(K)$, the size of its N\'eron--Tate regulator $\Reg(E)$, and the order of its Tate--Shafarevich group $\sha(E)$ (which we prove is finite).
These invariants have radically different behaviours depending on the congruence class of $p$ modulo 6. 
For instance  $\sha(E)$ either has trivial $p$-part or is a $p$-group.
On the other hand, we show that the product $|\sha(E)|\Reg(E)$ has size comparable to $r^{q/6}$ as $q\to\infty$, regardless of $p\pmod{6}$.
Our approach relies on the BSD conjecture, an explicit expression for the $L$-function of $E$, and a geometric analysis of the N\'eron model of $E$. 
\end{abstract}

\subjclass[2010]{
Primary 
 11G05, 
 14J27; 
Secondary 
 11G40, 
 11G99, 
 14G10, 
 14G99. 
}

\maketitle

\section{Introduction}
For a prime $p>3$, and powers $q$ and $r$ of $p$, we study the elliptic curve
\begin{equation*}
E: \quad y^2=x^3+t^q-t    
\end{equation*}
over the rational function field $K=\Frr(t)$.  We are interested in
the Mordell--Weil group $E(K)$, its regulator $\Reg(E)$, and the
Tate--Shafarevich group $\sha(E)$ of $E$.  By old results of Tate \cite{Tate66b} and
Milne \cite{Milne75}, $\sha(E)$ is finite and the conjecture of Birch and
Swinnerton-Dyer holds for $E$.

One of our main results says that $\Reg(E)|\sha(E)|$ is an integer
comparable in archimedean size to $r^{q/6}$ when $r$ is fixed and $q$
tends to $\infty$.  (See Theorem~\ref{thm:BS} below for the precise
statement.)  On the other hand, we will show that if
$p\equiv1\pmod 6$, then $E(K)=0$, $\Reg(E)=1$, and $|\sha(E)|$ is a
$p$-adic unit; and that if $p\equiv-1\pmod6$ and $\Frr$ is
sufficiently large, then $E(K)$ has rank $2(q-1)$, $\Reg(E)|\sha(E)|$
is a power of $p$, and $\sha(E)$ is a $p$-group
(Propositions~\ref{prop:ord-L-p=1(6)} and \ref{prop:ord-L-p=-1(6)},
and Corollary~\ref{cor:p-parts-via-L}).  These results show in
particular that the archimedean and $p$-adic sizes of
$\Reg(E)|\sha(E)|$ are independent---in our examples,
$\Reg(E)|\sha(E)|$ is large in the archimedean metric, whereas it may
be a $p$-adic unit or divisible by a large power of $p$.

To prove these results, we combine an analytic analysis of the special
value $L^*(E)$, the Birch and Swinnerton-Dyer (BSD) formula, and an
algebraic analysis of $\sha(E)$.  We are able to deduce the BSD
formula and analyze $\sha(E)$ by using the fact that the N\'eron model
$\EE\to\P^1$ of $E$ is birational to the quotient of a product of
curves by a finite group.  In fact, $\EE$ has three distinct such
presentations, and each is convenient for some aspect of our study.

The plan of the paper is as follows: In the next section, we gather
the basic definitions and present a few preliminary results about $E$.
In Section~\ref{s:exp-sums}, we recall standard results about Gauss
and Jacobi sums and use them in Section~\ref{s:L-elem} to give an
elementary calculation of the Hasse--Weil $L$-function of $E$.  In
Section~\ref{s:curves}, we prove results about the geometry and
cohomology of certain curves over $\Frr$ which are used in
Section~\ref{s:domination} to show that the N\'eron model of $E$ is
dominated by a product of curves (in multiple ways).  In Section~\ref{s:L-cohom}, 
we use these dominations to give alternate calculations of the $L$-function.
In Section~\ref{s:BSD}, we
apply the BSD conjecture to study the rank of $E(K)$, and in
Section~\ref{s:BSD-p} we study the $p$-adic size of the special value
and the order of $\sha(E)$ using the BSD formula.
Section~\ref{s:sha-alg} reproves our results about $\sha(E)$ by a
direct, algebraic approach, i.e., independently of the BSD formula.
Finally, in Section~\ref{s:BS}, we study the archimedean size of the special
value and the ``Brauer--Siegel ratio'' of Hindry. 

The following table summarizes our main results:

\renewcommand{\arraystretch}{1.4}
\begin{center}
\begin{tabular}{c | c | c |}
& $p\equiv 1\bmod{6}$ & $p\equiv-1\bmod{6}$\\ \hline
$E(K)_{\mathrm{tors}}$ & \multicolumn{2}{|c|}{$\cong\{0\}$} \\
& \multicolumn{2}{|c|}{(Proposition~\ref{prop:Tawagawa-torsion}(2))} \\ \hline
BSD conjecture &  \multicolumn{2}{|c|}{holds for $E$} \\ 
& \multicolumn{2}{|c|}{(Theorem~\ref{thm:BSD})}  \\ \hline 
$\rk E(K)$ & $= 0$ & $= 2(q-1)$ for $\Frr$ large enough \\
& (Proposition~\ref{prop:ord-L-p=1(6)}(3)) &  (Proposition~\ref{prop:ord-L-p=-1(6)}(3)) \\ \hline
$\mathrm{Reg}(E)$ & $= 1$ & is a power of $p$ for $\Frr$ large enough \\ 
& (Proposition~\ref{prop:ord-L-p=1(6)}(4)) & (Corollary~\ref{cor:p-parts-via-L}(3)) \\ \hline
$\sha(E)$ & has trivial $p$-part & is a $p$-group \\ 
& (Proposition~\ref{prop:sha-alg}(1)) &  (Corollary~\ref{cor:p-parts-via-L}(3))\\\hline
$\dim\sha(E)$ & $= 0$ & $= \lfloor q/6\rfloor$  \\
& (Corollary~\ref{cor:dimsha-via-L}(1)) & (Corollary~\ref{cor:dimsha-via-L}(2))\\ \hline
$\lim_{q\to\infty}\BS(E)$ & \multicolumn{2}{|c|}{$= 1$} \\
& \multicolumn{2}{|c|}{(Theorem~\ref{thm:BS})} \\ \hline
$|\sha(E)|\mathrm{Reg}(E)$ & $\geq r^{\lfloor{q}/{6}\rfloor (1+o(1))}$ as $q\to\infty$  & $= r^{\lfloor q/6 \rfloor}$ for $\Frr$ large enough \\
& (Corollary~\ref{cor:BS-p=1(6)}) & (Corollary~\ref{cor:p-parts-via-L}(3)) \\ \hline
\end{tabular}
\end{center}

Here, ``for $\Frr$ large enough'' means that there is a finite extension $\F_{r_0}$ of $\Fp$ such 
that the statement holds for all finite extensions $\Frr$ of $\F_{r_0}$ (see Proposition~\ref{prop:ord-L-p=-1(6)}(3) for an explicit definition of $r_0$).

\subsection{Acknowledgement}
RG is supported by the Swiss National Science Foundation (SNSF Professorship \#170565 awarded to Pierre Le~Boudec), and received additional funding from ANR grant ANR-17-CE40-0012 (FLAIR). 
DU is partially supported by grant 359573 from the Simons Foundation.
Both authors thank the anonymous referee for a very careful reading of the paper and several valuable suggestions.

\section{First results}\label{s:first-results}

\subsection{Definitions and notation}\label{ss:definitions}
Notation from this section will be in force throughout the paper.  We
refer to \cite{Ulmer11} for a review of what is known about elliptic
curves over function fields, in particular with regard to the
conjecture of Birch and Swinnerton-Dyer.

Let $p>3$ be a prime number, let $\Fp$ be the field of $p$ elements, 
and fix an algebraic closure $\Fpbar$ of $\Fp$.  
Let $\Frr\subset\Fpbar$  be the finite extension of $\Fp$ of cardinality $r=p^\nu$,
and let $\K=\Frr(t)$ be the rational function field over $\Frr$.  We
write $v$ for a place of $K$, $K_v$ for the completion of $K$ at $v$,
$\deg(v)$ for the degree of $v$, $\F_v$ for the residue field at $v$,
and $r_v=r^{\deg(v)}$ for the cardinality of $\F_v$.  We identify
places of $K$ with closed points of the projective line $\P^1_{\Frr}$ over $\Frr$, and we note
that finite places of $K$ are in bijection with monic irreducible polynomials in
$\Frr[t]$. 

Let $q=p^f$ be a power of $p$, and let $E$ be the elliptic curve over
$K$ defined by
\begin{equation}\label{eq:WModel}
E=E_{q,r}: \quad y^2=x^3+t^q-t.    
\end{equation}
Write $E(K)$ for the group of $K$-rational points on $E$.  By the
Lang--N\'eron theorem, this is a finitely generated abelian group.

Let $\EE\to\P^1_{\Frr}$ be the N\'eron model of $E$.  We write $c_v$
for the number of connected components in the special fiber of $\EE$
over $v$.  One also calls $c_v$ the local Tamagawa number of $E$ at
$v$.

We denote the (differential) height  of $E$, as defined in \cite{Ulmer11}*{Lecture~3, \S2}, by $\deg(\omega_E)$.
It follows from \cite{Ulmer11}*{Lecture~3, Exer.~2.2} that for $E$, 
$$\deg(\omega_E)=\lceil q/6\rceil=\begin{cases}
\frac{q+5}6&\text{if $q\equiv1\pmod6$}\\
\frac{q+1}6&\text{if $q\equiv-1\pmod6.$}
\end{cases}$$

\subsection{Reduction types}\label{ss:reduction}
From the Weierstrass equation \eqref{eq:WModel}, one easily computes
 $$\Delta = - 2^43^3 \left(t^q-t\right)^2 \qquad \text{ and } \qquad 
 j(E) = 0.$$
Applying Tate's algorithm (see \cite{SilvermanAT}*{Chap.~IV, \S9}),
one obtains the following further facts:
\begin{itemize}
\item The curve $E$ has additive reduction of type $\II$ at
  all finite places $v$ dividing $t^q-t$, 
\item At $t=\infty$, the curve $E$ has additive reduction of type
  $\II^\ast$ if $q\equiv 1\bmod{6}$ and of type $\II$
  if $q\equiv 5\bmod{6}$.
\item The curve $E$ has good reduction at all other places of $K$.
\end{itemize}
From this collection of local information, one deduces that 
the conductor $\mathcal{N}_E$ of $E$ has degree  $\deg\mathcal{N}_E=2(q+1)$. 
One can also recover the fact that $\deg(\omega_E)=\lceil q/6\rceil$ from this computation.

\subsection{Isotriviality}
Consider the finite extension $L=K[u]/(u^6=t^q-t)$ of $K$, and let
$E_0$ be the elliptic curve over $\Frr$ defined by
$$E_0:\quad w^2=z^3+1.$$  
Then $E\times_KL$ 
is isomorphic to the constant curve
$E_0\times_{\Frr}L$ via the substitution $(x,y)=(u^2z,u^3w)$.
In other words,  $E$ is the sextic twist of $E_0$ (or rather of $E_0\times_\Frr K$) by $t^q-t$.

We record two consequences for later use.  Recall that the local
Tamagawa number $c_v$ is the number of components in the special fiber
of the N\'eron model at $v$.  Its values in terms of the local
reduction type are tabulated in \cite[p.~365]{SilvermanAT}.

\begin{prop}\label{prop:Tawagawa-torsion}
\mbox{}
\begin{enumerate}
\item For every place $v$ of $K$, the local Tamagawa number $c_v$ is $1$.
\item One has $E(K)_{\mathrm{tors}}=0$.
\end{enumerate}  
\end{prop}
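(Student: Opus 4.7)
For part (1), the argument is a direct table lookup. Section~\ref{ss:reduction} establishes that $E$ has only the reduction types ``good'', $II$, and $II^\ast$; the table of local Kodaira invariants on p.~365 of Silverman's \emph{Advanced Topics} (already cited above) gives Tamagawa number $1$ in each of these cases, so $c_v = 1$ for every $v$.

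For part (2), my plan is to use the sextic twist structure of Section~\ref{ss:reduction} to transport the problem to the constant elliptic curve $E_0$, and then exploit the fact that $t^q-t$ is squarefree. Suppose $P = (x_0,y_0) \in E(K)$ is a nontrivial torsion point. Via the $L$-isomorphism $(x,y) \mapsto (x/u^2, y/u^3)$, $P$ corresponds to a nontrivial torsion point $(x_0/u^2,\,y_0/u^3) \in E_0(L)_{\mathrm{tors}}$. Since $E_0$ is constant, any $L$-point of $E_0$ extends (by properness of $E_0$ and regularity of the model) to a morphism from the smooth projective $\Frr$-model $C_L$ of $L$ into $E_0$; if such a morphism is $n$-torsion then its image lies in the finite subscheme $E_0[n]$, and the connectedness of $C_L$ forces the morphism to be constant. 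So $(x_0/u^2,\,y_0/u^3) = (\alpha,\beta)$ for some $(\alpha,\beta) \in E_0(\overline{\Frr})$.

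The crux of the argument---and the main obstacle to overcome---is to derive a contradiction from the requirement $x_0,y_0 \in K$. If $\alpha \neq 0$, cubing the identity $\alpha u^2 = x_0 \in K$ yields $t^q-t = (x_0/\alpha)^3$, exhibiting $t^q - t$ as a cube in $\overline{\Frr}(t)^\times$; this contradicts the squarefreeness of $t^q-t = \prod_{a \in \Fq}(t-a)$. Hence $\alpha = 0$ and $x_0 = 0$; substituting into the Weierstrass equation~\eqref{eq:WModel} then gives $y_0^2 = t^q-t$, which by the same squarefreeness cannot be a square in $K^\times$---a final contradiction. Therefore $E(K)_{\mathrm{tors}} = 0$.
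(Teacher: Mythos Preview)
Your proof is correct and follows essentially the same approach as the paper: transport a putative torsion point to $E_0(L)$ via the sextic-twist isomorphism, use that torsion on a constant curve has constant coordinates, and then derive a contradiction from $K$-rationality. The paper's endgame is marginally quicker---it observes directly that $u^2,u^3\notin K$ (since $[L:K]=6$), forcing $\alpha=\beta=0$ and hence $P=(0,0)\notin E(K)$---whereas you cube and square to invoke squarefreeness of $t^q-t$ explicitly; but this is the same underlying fact.
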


\begin{proof}
  Part (1) is immediate from the table cited above.  For part (2),
  suppose that $P\in E(K)$ is a non-trivial torsion point.  Let
  $Q=(\alpha,\beta)\in E_0(L)$ be the image of $P$ under the
  above isomorphism $E\times_KL\cong E_0\times_{\Frr}L$.  Then $Q$ is
  again a torsion point, and it is known (e.g.,
  \cite[Prop.~I.6.1]{Ulmer11}) that torsion points on a constant
  curve have constant coordinates.  I.e., we have
  $\alpha,\beta\in\Frr$.  The original point $P$ thus has coordinates
  $(\alpha u^2,\beta u^3)$.  However, if $\alpha\in\Frr$, then
  $\alpha u^2\in K$ only if $\alpha=0$, and if $\beta\in\Frr$, then
  $\beta u^3\in K$ only if $\beta=0$.  Since $(0,0)\not\in E(K)$,
  there is no non-trivial torsion point $P\in E(K)$.
\end{proof}

\section{Preliminaries on exponential sums}\label{s:exp-sums}

\subsection{Finite fields}\label{ss:finite-fields}
Fix an algebraic closure $\Qbar$ of $\Q$ and a prime ideal $\gP$ above
$p$ in the ring of algebraic integers $\Zbar\subset \Qbar$.  The
quotient $\Zbar/\gP$ is then an algebraic closure of $\Fp$ which we
denote by $\Fpbar$.   All finite fields in this paper will be viewed as
subfields of this $\Fpbar$.

\subsection{Multiplicative characters}\label{ss:mult-chars}
Reduction modulo $\gP$ induces an isomorphism between the roots of
unity of order prime to $p$ in $\Zbar$ and $\Fpbar^\times$.  We let
$\teich:\Fpbar^\times\to\Qbar^\times$ denote the inverse of this
isomorphism.  The same letter $\teich$ will be used to denote the
restriction of $\teich$ to the multiplicative group of any finite
extension $\F$ of $\Fp$ ($\F$ being viewed as a subextension of
$\Fpbar$).

If $\F$ is a finite extension of $\Fp$ and $n$ is a divisor of
$|\F^\times|$, define
$$\chi_{\F,n}:=\teich^{|\F^\times|/n}.$$
This is a character of $\F^\times$ of order exactly $n$.  In
particular, if $n=|\F^\times|$, the character $\chi_{\F, n}$ is a
generator of the group of multiplicative characters of $\F$.

If $\F\subset\F'$ are finite extensions of $\Fp$, if $n$ divides the
order of $\F^\times$,  and if $\N_{\F'/\F}$
denotes the norm from $\F'$ to $\F$, then an elementary calculation
shows that 
$\chi_{\F',n}=\chi_{\F,n}\compose\N_{\F'/\F}.$

\subsection{Additive characters}\label{ss:add-chars}
Fix once and for all a non-trivial additive character
$$\psi_p:\Fp\to\Q(\mu_p)^\times\subset\Qbar^\times.$$  If $\F$ is a finite
extension of $\Fp$, if $\Tr_{\F/\Fp}$ denotes the trace from $\F$ to
$\Fp$, and if $\alpha\in\F^\times$, then the map $x\mapsto\psi_\alpha(x)$
defined by
$$\psi_\alpha(x)=\psi_p\left(\Tr_{\F/\Fp}(\alpha x)\right)$$
for all $x\in\F$ is a non-trivial additive character of $\F$. 
Moreover, any non-trivial
additive character of $\F$ is of the form $\psi_\alpha$ for a unique
$\alpha\in\F^\times$.  When we need to make the underlying field precise, we
write $\psi_{\F,\alpha}$ instead of $\psi_\alpha$.

\subsection{Gauss sums}\label{ss:GaussSums}
If $\F$ is a finite extension of $\Fp$, $\chi$ is a non-trivial
character of $\F^\times$, and $\psi$ is a non-trivial additive
character of $\F$, define the Gauss sum $G_\F(\chi,\psi)$ by
$$G_\F(\chi,\psi)=-\sum_{x\in\F^\times}\chi(x)\psi(x).$$

We recall a few well-known properties of these Gauss sums:
\begin{enumerate}
\item\label{item.Gauss.integer} If $\chi$ has order $n$, the sum $G_\F(\chi,\psi)$ is an
  algebraic integer in $\Q(\mu_{np})$.
\item\label{item.Gauss.magnitude}
For any non-trivial characters $\chi$ and $\psi$,   one has $|G_\F(\chi,\psi)| = |\F|^{1/2}$ in any complex embedding of $\Qbar$.
\item\label{item.Gauss.change.add.char} 
For all non-trivial  multiplicative characters $\chi$ 
  on $\F^\times$ and all $\alpha\in\F^\times$, one has
$$G_\F(\chi,\psi_\alpha) = \chi^{-1}(\alpha)G_\F(\chi,\psi_1).$$ 
\item\label{item.Gauss.HD} (Hasse--Davenport relation) Let $\chi$ be a
  non-trivial multiplicative character on $\F^\times$ and $\psi$ be a
  non-trivial additive character on $\F$.  Then for any finite
  extension $\F'/\F$, one has
$$G_{\F'}(\chi\compose \N_{\F'/\F},\psi\compose \Tr_{\F'/\F})
=G_\F(\chi,\psi)^{[\F':\F]}.$$
\item (Stickelberger's Theorem)  Let $\ord$ be the $p$-adic valuation
  of $\Qbar$ associated to $\gP$, normalized so that $\ord(p)=1$.  If
  $\F$ has cardinality $p^\mu$ and $0<s<p^\mu-1$ has $p$-adic
  expansion
$$s=s_0+s_1p+\cdots+s_{\mu-1}p^{\mu-1}$$
with $0\le s_i<p$, then
$$\ord(G_\F(\chi_{\F,|\F^\times|}^{-s},\psi))=\frac1{p-1}\sum_{i=0}^{\mu-1}s_i.$$
\end{enumerate}
These results are classical, and the reader may find proofs of them
(and the claims in the next two subsections) in \cite{WashingtonCF}*{Chap.~VI, \S1-\S2} for instance.

\subsection{Explicit Gauss sums}\label{ss:explicit-sums}
Let $\F$ be a finite extension of $\F_p$, and write $|\F|=p^\mu$.
An elementary calculation shows that, for any non-trivial additive character $\psi$ of $\F$, one has
\begin{equation}\label{eq:quad-Gauss-sum}
G_\F(\chi_{\F,2},\psi)^2=((-1)^{(p-1)/2} p)^{\mu}.
\end{equation}
In particular,  $\ord G_\F(\chi_{\F, 2}, \psi)= \mu/2$. Here, as above, $\ord$ denotes the $p$-adic valuation on $\Qbar$ associated to $\gP$, normalized to that $\ord(p)=1$.

If $p\equiv1\pmod3$, 
then Stickelberger's theorem (see (5) above) shows that for any non-trivial additive character
$\psi$ of $\F$, one has 
\begin{equation}\label{eq:cubic-Gauss-sum1}
\ord G_\F(\chi_{\F,3},\psi)=\frac23\mu\quad\text{and}\quad
\ord G_\F(\chi_{\F,3}^{-1},\psi)=\frac13\mu.  
\end{equation}
On the other hand, if $p\equiv2\pmod3$, then $3$ divides $|\F^\times|$ if and only if $\mu=[\F:\F_p]$ is even. 
If this is the case (i.e., if $|\F|=p^\mu\equiv1\pmod3$),
an old result of Tate and Shafarevich (see
\cite[Lemma~8.2]{Ulmer02}) and the Hasse--Davenport relation yield that
$$G_\F(\chi_{\F,3},\psi_1)=G_\F(\chi_{\F,3}^{-1},\psi_1)=(-p)^{\mu/2},$$
and therefore (see (3) in the previous subsection)
\begin{equation}\label{eq:cubic-Gauss-sum2}
G_\F(\chi_{\F,3},\psi_\alpha)=\chi_{\F,3}^{-1}(\alpha)(-p)^{\mu/2}
\quad\text{and}\quad 
G_\F(\chi_{\F,3}^{-1},\psi_\alpha)=\chi_{\F,3}(\alpha)(-p)^{\mu/2}.
\end{equation}
In particular, $\ord G_\F(\chi_{\F,3}^{\pm1},\psi_\alpha)= \mu/2$ in this case.

\subsection{Jacobi sums}\label{ss:JacobiSums}
We require only the simplest case:  Let $\F$ be a finite extension of
$\Fp$ and let $\chi_1$ and $\chi_2$ be two non-trivial characters of
$\F^\times$ such that $\chi_1\chi_2$ is also non-trivial.  Define
$$J_\F(\chi_1,\chi_2)=-\sum_{x\in\F}\chi_1(x)\chi_2(1-x).$$

An elementary calculation (again, see \cite[Chap.~VI]{WashingtonCF})
shows that
\begin{equation}\label{eq:GaussJacobi}
J_\F(\chi_1,\chi_2)=\frac{G_\F(\chi_1,\psi)G_\F(\chi_2,\psi)}
{G_\F(\chi_1\chi_2,\psi)}  
\end{equation}
for any non-trivial additive character $\psi$ of $\F$.  One may then
deduce the archimedean and $p$-adic sizes of $J(\chi_1,\chi_2)$
from the results quoted in Section~\ref{ss:GaussSums}.

\subsection{Orbits}\label{ss:orbits}
Recall that $p>3$ is a prime. 
Given an integer $n\geq 1$ prime to $p$, let 
$$S=S_{n,q}=\left(\Z/n\Z\setminus\{0\}\right)\times\Fqtimes
\quad\text{and}\quad
S^\times=S_{n,q}^\times=(\Z/n\Z)^\times\times\Fqtimes.$$ 
Let $r=p^\nu$ for some positive integer $\nu$. Write $\<r\>$ for the subgroup of $\Q^\times$ generated by
$r$, and consider the action of $\<r\>$  on $S$ and $S^\times$ given by the rule
$$\forall (i, \alpha)\in S, \qquad r(i,\alpha):=(ri,\alpha^{1/r}).$$
In other words, $r$ acts on $\Z/n\Z$ by multiplication, and on $\Fqtimes$ by the inverse of the $r$-power Frobenius. 
Let $O_{r,n,q}$ be the set of orbits of $\<r\>$ on $S$ and
$O_{r,n,q}^\times$ the set of orbits on $S^\times$.

If $n=1$, then $O_{r,n,q}^\times$ is just the set of orbits of $\<r\>$
on $\Fqtimes$, which we denote by $O_{r,q}$.  Note that if $o\in O_{r,q}$
is the orbit through $\alpha$, then the cardinality $|o|$ of $o$  is
equal to the degree $[\Frr(\alpha):\Frr]$ of the field extension
$\Frr(\alpha)$ over $\Frr$.

For a general $n$, if $o\in O_{r,n,q}^\times$ is the orbit through
$(i,\alpha)$, then
\begin{equation}\label{eq:orbit-size}
|o|=\lcm\left(\ord^\times(r\bmod n),[\Frr(\alpha):\Frr]\right)
\end{equation}
where $\ord^\times(r\bmod n)$ denotes the order of $r$ in
$(\Z/n\Z)^\times$. 
Note that, for any $\alpha\in\F_q$, one has $[\F_r(\alpha):\F_r] = \lcm(\nu, [\F_p(\alpha):\F_p])/[\F_p(\alpha):\F_p]$, and $[\F_p(\alpha):\F_p]$ divides $f=[\F_q:\F_p]$. 
It is  then clear that $|o|$ divides $\lcm\left(\ord^\times(r\bmod n),\lcm(f,\nu)/f\right)$ for any orbit $o\in O^\times_{r,n,q}$.

In what follows, we will only need the cases where $n$ divides 6.  
If $r\equiv1\pmod6$, then   $\<r\>$ acts trivially on $\Z/6\Z$ and  the
orbits $o\in O_{r,6,q}$ are ``vertical'' in the sense that they are of
the form $o=\{(i,\alpha)\}$ where $i$ is fixed and $\alpha$ runs
through an orbit of $\<r\>$ on $\Fqtimes$.  In particular,
$|o|=[\Frr(\alpha):\Frr]$.

On the other hand, if $r\equiv5\equiv-1\pmod6$, then orbits $o\in
O_{r,6,q}$ ``bounce left and right'' in the sense that an orbit $o$ contains
elements $(i,\alpha)$ and $r(i,\alpha)=(-i,\alpha^{1/r})$.  In this case, if
$o$ is the orbit through $(i,\alpha)$, then 
$|o|=\lcm(2,[\Frr(\alpha):\Frr])$.  

In both cases (that is to say, for $r\equiv\pm1 \pmod 6$), note that $\nu |o|$ is even for all orbits $o\in O^\times_{r, 6, q}$.

For $n\in\{2,3\}$, the natural projection $(\Z/6\Z)^\times\to(\Z/n\Z)^\times$ induces a map $\pi_n:O_{r,6,q}^\times\to O_{r,n,q}^\times$. 
We record a few elementary observations about $\pi_n$:
\begin{itemize}
    \item The map $\pi_3$
is a bijection, because $(\Z/6\Z)^\times\to(\Z/3\Z)^\times$ is a
bijection.
    \item If $r\equiv1\pmod6$, then $\pi_2$ is two-to-one.  (This is
essentially the same point as the ``vertical'' remark above.)
    \item    If
$r\equiv-1\pmod6$  and if $o'\in O_{r,2,q}^\times$ has $|o'|$ even, then there
are two orbits $o\in O_{r,6,q}^\times$ with $\pi_2(o)=o'$.  
Finally, if $r\equiv-1\pmod6$  and if $o'\in O_{r,2,q}^\times$ has $|o'|$
odd, then there is a unique orbit $o\in O_{r,6,q}^\times$ with $\pi_2(o)=o'$
and the underlying map of sets $o\to o'$ is two-to-one.
\end{itemize}
Motivated by this last remark, for any $o\in O_{r,6,q}^\times$, we define
$$m_2(o)=\frac{|o|}{|\pi_2(o)|}.$$
Thus $m_2(o)=1$ unless $r\equiv-1\pmod6$ and $|\pi_2(o)|$ is odd, in
which case $m_2(o)=2$.

\subsection{Gauss sums associated to orbits}\label{ss:G(o)}
Fix data $p$, $r$, $q$, and $n$ as above, and let $o\in O_{r,n,q}$ be
the orbit of $\<r\>$ through
$(i,\alpha)\in S_{n,q}=\left(\Z/n\Z\setminus\{0\}\right)\times\Fqtimes$.
Let $\F=\F_{r^{|o|}}$, i.e., $\F$ is the extension of~$\Frr$ of degree
$|o|$.  By formula~\eqref{eq:orbit-size} for $|o|$, $\F$ can be interpreted as the
smallest extension of~$\Frr$ which admits a multiplicative character
of order $n$ and contains $\alpha$. 
To the orbit $o$ we  then associate the Gauss sum
\begin{equation}\label{eq:G(o)-def}
G(o)=G_\F(\chi_{\F,n}^i,\psi_\alpha),  
\end{equation}
where $\chi_{\F, n}$ and $\psi_\alpha$ are the characters on $\F$ defined in Sections~\ref{ss:mult-chars} and \ref{ss:add-chars}. 
An elementary computation, as in \cite[Lemma~2.5.8]{CohenNT1}, shows
that
$G_\F(\chi,\psi_{\alpha})=G_\F(\chi^p,\psi_{\alpha^{1/p}}),$
so that $G(o)$ is indeed well defined independently of the choice of element $(i,\alpha)\in o$.

We next record the valuations of Gauss sums associated to orbits for
$n=2$ and $3$.  These claims follow immediately from the results of
Section~\ref{ss:explicit-sums}.  

When $n=2$, we have $\ord(G(o))=\nu|o|/2$ for all orbits $o\in O_{r,2,q}^\times$.

When $n=3$, $p\equiv1\pmod3$,  and $o\in O_{r,3,q}^\times$, then 
$$\ord(G(o))=\begin{cases}
\frac23\nu|o|&\text{if $o$ contains an element $(1,\alpha)$}\\
\frac13{\nu|o|}&\text{if $o$ contains an element $(-1,\alpha)$.}
\end{cases}$$

When $n=3$ and $p\equiv-1\pmod3$, then $\ord(G(o))=\frac12{\nu|o|}$
for all $o\in O_{r,3,q}^\times$.

The following shows that the Gauss sums $G(o)$ ``decompose'' as roots of unity
times powers of Gauss sums of small weight.  
This will play a key role in our estimation of the archimedean size of $\Reg(E)|\sha(E)|$ in  Section~\ref{s:BS}.

\begin{prop}\label{prop:G-power}
Let $n\geq 1$ be an integer coprime to $p$, and write $c:= \ord^\times(p\bmod{n})$ for the order of $p$ modulo $n$. 
Then for all $o\in O_{r,n,q}$, one has
$$G(o)=\zeta g^{|o|\nu/c}$$
where $\zeta$ is a $n$-th root of unity, and $g\in\Q(\mu_{np})$ a Weil integer of size $p^{c/2}$.
\end{prop}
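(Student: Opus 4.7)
The plan is to descend $G(o)$ to a Gauss sum over the smallest subfield of $\F=\F_{r^{|o|}}$ admitting a character of order $n$ via the Hasse--Davenport relation, and then absorb the dependence on $\alpha$ into an $n$-th root of unity. Pick any representative $(i,\alpha)\in o$, and set $\F_0=\F_{p^c}$.

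First I would check that $\F_0\subseteq\F$. Since $\F$ carries the character $\chi_{\F,n}$ of order $n$, we have $n\mid|\F^\times|=p^{\nu|o|}-1$, so $c=\ord^\times(p\bmod n)$ divides $\nu|o|$, and $[\F:\F_0]=\nu|o|/c$. Note also that $i\not\equiv 0\pmod n$ by definition of $S_{n,q}$, so the characters $\chi_{\F,n}^i$ and $\chi_{\F_0,n}^i$ are both nontrivial.

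Next I would invoke Hasse--Davenport (property~(4) of Section~\ref{ss:GaussSums}) together with the compatibilities $\chi_{\F,n}^i=\chi_{\F_0,n}^i\compose\N_{\F/\F_0}$ (Section~\ref{ss:mult-chars}) and $\psi_{\F,1}=\psi_{\F_0,1}\compose\Tr_{\F/\F_0}$ (transitivity of trace). This yields
$$G_\F(\chi_{\F,n}^i,\psi_{\F,1})=G_{\F_0}(\chi_{\F_0,n}^i,\psi_{\F_0,1})^{\nu|o|/c}.$$
Setting $g:=G_{\F_0}(\chi_{\F_0,n}^i,\psi_{\F_0,1})$, properties~(1) and (2) of Section~\ref{ss:GaussSums} immediately give $g\in\Q(\mu_{np})$ with $|g|=|\F_0|^{1/2}=p^{c/2}$, so $g$ is a Weil integer of the required size.

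Finally I would shift the additive character from $\psi_{\F,1}$ back to $\psi_\alpha$ using property~(3) of Section~\ref{ss:GaussSums}, namely
$$G(o)=G_\F(\chi_{\F,n}^i,\psi_\alpha)=\chi_{\F,n}^{-i}(\alpha)\cdot G_\F(\chi_{\F,n}^i,\psi_{\F,1}).$$
The prefactor $\zeta:=\chi_{\F,n}^{-i}(\alpha)$ is a value of a character of order dividing $n$, hence an $n$-th root of unity, and combining with the previous display gives the claimed identity $G(o)=\zeta g^{\nu|o|/c}$. There is no serious obstacle: the argument is essentially Hasse--Davenport applied to the extension $\F/\F_0$, and the only piece of bookkeeping worth flagging is the verification $\F_0\subseteq\F$, which is what makes the descent legal in the first place.
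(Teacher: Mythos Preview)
Your proof is correct and follows essentially the same approach as the paper's: both arguments combine the change-of-additive-character formula (property~(3)) with the Hasse--Davenport relation to descend from $\F=\F_{r^{|o|}}$ to $\F_{p^c}$, identifying $\zeta=\chi_{\F,n}^{-i}(\alpha)$ and $g=G_{\F_{p^c}}(\chi_{\F_{p^c},n}^i,\psi_{\F_{p^c},1})$. The only difference is the order in which you apply these two steps, which is immaterial.
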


Recall that an algebraic number $z\in\Qbar$ is called \emph{a Weil integer of size $p^a$} (with
$a\in\frac12\Z_{\ge0}$) if $z$ is an algebraic integer such that
$|z|=p^a$ in any complex embedding $\Q(z)\hookrightarrow\C$.  (These
numbers are also sometimes called $p$-Weil integers of weight $2a$.)


\begin{proof}
  Note that $\F_{p^c}^\times$ admits characters of order exactly $n$.  
  By definition, for any choice of representative $(i, \alpha)\in o$, we have
$$G(o)=G_\F(\chi_{\F,n}^i,\psi_{\F,\alpha})$$
where $\F$ is the extension of $\Frr$ of degree $|o|$, i.e.,
$|\F|=p^{|o|\nu}$. By construction,  $c$ divides $\nu|o|$, so $\F$ is an extension of $\F_{p^c}$.
Then the following holds: 
\begin{align*}
G(o)&=G_\F(\chi_{\F,n}^i,\psi_{\F,\alpha})
=\chi_{\F,n}^{-i}(\alpha)G_\F(\chi_{\F,n}^i,\psi_{\F,1})
&\text{(by (3) in Section~\ref{ss:GaussSums})}\\
&=\chi_{\F,n}^{-i}(\alpha)G_{\F_{p^c}}(\chi_{\F_{p^c},n}^i,\psi_{\F_{p^c},1})^{|o|\nu/c}
&\text{(by the Hasse--Davenport relation)}.
\end{align*}
We now let $\zeta:=\chi_{\F,n}^{-i}(\alpha)$ and $g=G_{\F_{p^c}}(\chi_{\F_{p^c},n},\psi_{\F_{p^c},1})$.
Since $\chi_{\F,n}$ has order $n$, $\zeta$ is a $n$-th root of unity. 
By \eqref{item.Gauss.integer} and \eqref{item.Gauss.magnitude} in Section~\ref{ss:GaussSums}, $g$ is a Weil integer in $\Q(\mu_{np})$ of size $p^{c/2}$.
\end{proof}

\subsection{Jacobi sums associated to orbits}\label{ss:J(o)}
With data $p$ and $r$ as usual, let $\<r\>$ act on $(\Z/6\Z)^\times$ by
multiplication, and let $N=N_{r,6}$ be the set of orbits of $\<r\>$ on
$(\Z/6\Z)^\times$.  Thus, if $r\equiv1\pmod6$, there are two orbits,
both singletons, and if $r\equiv-1\pmod6$, there is a unique orbit,
$o=\{1,-1\}$.  (This is a somewhat trivial situation, but we introduce
it for consistency with our treatment of Gauss sums.)
Given $o\in N_{r,6}$, write $\F=\F_{r^{|o|}}$ and associate to $o$ the
Jacobi sum
\begin{equation}\label{eq:def-J(o)}
J(o):=J_\F(\chi_{\F,2}^{-i},\chi_{\F,3}^{-i})
=J_{\F}(\chi_{\F,6}^{-3i},\chi_{\F,6}^{-2i})
\end{equation}
for any $i\in o$.  As a straightforward calculation shows, one has $J_\F(\chi_1^p,\chi_2^p)=J_\F(\chi_1,\chi_2)$, so that
the sum $J(o)$ is well defined independently of the choice of $i\in o$.

We next record the valuations of $J(o)$ for $o\in N_{r,6}$.  These
claims follow easily from the expression of Jacobi sums in terms of
Gauss sums and Stickelberger's theorem (see Sections~\ref{ss:GaussSums} and~\ref{ss:JacobiSums}).  If $p\equiv-1\pmod6$, then
$$\ord(J(o))=\frac12{\nu|o|}$$
for all $o\in N_{r,6}$.  On the other hand, if $p\equiv1\pmod6$, then
$$\ord(J(\{1\}))=0\quad\text{and}\quad\ord(J(\{-1\}))=\nu.$$

Finally, we introduce the map $\rho_6:O_{r,6,q}^\times\to N_{r,6}$
induced by the projection 
$$(\Z/6\Z)^\times\times\Fq^\times\to(\Z/6\Z)^\times.$$
This will play a role in our geometric calculation of the $L$-function $L(E,s)$ in Section~\ref{s:L-cohom}.

\section{Elementary calculation of the $L$-function}\label{s:L-elem}
Recall that we have fixed a prime number $p>3$, a finite field $\Frr$ of
characteristic $p$, a power $q$ of $p$, and that we have defined $E=E_{q,r}$ as the
elliptic curve
$$E:\quad y^2=x^3+t^q-t$$
over $K=\Frr(t)$.  In this section, we give an elementary calculation
of the Hasse--Weil $L$-function of $E$ over $K$. 
The Hasse--Weil $L$-function of $E$ is defined as the Euler product
$$L(E,T)=\prod_{\text{good $v$}}
\left(1-a_vT^{\deg(v)}+r_vT^{2\deg(v)}\right)^{-1}
\prod_{\text{bad $v$}}\left(1-a_vT^{\deg(v)}\right)^{-1},$$ 
where the products are over places $v$ of $K$.
Here ``good $v$'' refers to the places where $E$ has good reduction,
``bad $v$'' refers to the places of bad reduction, and for any place $v$,
$\F_v$ is the residue field at $v$, $r_v$ is its cardinality, and
$a_v$ is the integer such that the number of points on the
plane cubic model of $E$ over $\F_v$ is equal to $r_v-a_v+1$.  Note that, since
$E$ has additive reduction at all bad places
(Section~\ref{ss:reduction}), the local factors at such places are all
$1$, so
\begin{equation}\label{eq:L-elem}
L(E,T)=\prod_{\text{good $v$}}\left(1-a_vT^{\deg(v)}+r_vT^{2\deg(v)}\right)^{-1}.
\end{equation}

One also considers $L(E,s)=L(E,T)$ with $T=r^{-s}$.  Since the curve $E$ is non-constant, it is known 
(e.g., \cite[Lecture~1, Thm.~9.3]{Ulmer11}) that
$L(E,s)$ is a polynomial in $T=r^{-s}$ and that it satisfies a
functional equation relating $L(E,s)$ and $L(E,2-s)$.

Recall from Section~\ref{ss:orbits} that $O_{r,n,q}^\times$ denotes the 
set of orbits of $\<r\>$ acting on $(\Z/n\Z)^\times\times\Fqtimes$, that 
$\pi_n:O_{r,6,q}^\times\to O_{r,n,q}^\times$ (for $n=2,3$) denotes the
map induced by the natural projection
$(\Z/6\Z)^\times\to(\Z/n\Z)^\times$, and that
$m_2(o)=\frac{|o|}{|\pi_2(o)|}$.  
As in Section~\ref{ss:G(o)}, we attach a Gauss sum  $G(o)$  to any orbit  $o\in O^\times_{r,n,q}$.

The main result of this section is the following.

\begin{thm}\label{thm:L-elem} In the above setting, we have
$$L(E,s)=\prod_{o\in O_{r,6,q}^\times}
\left(1-G(\pi_2(o))^{m_2(o)}G(\pi_3(o))r^{-s|o|}\right).$$ 
\end{thm}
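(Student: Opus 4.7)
The strategy is to take formal logarithms of both sides of the theorem and match coefficients of $T^m/m$ for each $m\geq 1$ (with $T=r^{-s}$). Writing $A(o):=G(\pi_2(o))^{m_2(o)}G(\pi_3(o))$ and expanding the Euler product \eqref{eq:L-elem} with trivial bad factors---and observing that the singular fibers $t^q=t$ contribute zero because $\sum_x\chi_2(x^3)=0$---the theorem reduces to verifying, for every $m\geq 1$,
\begin{equation}\label{eq:plan-target}
\sum_{t\in\F_{r^m}}S_t^{(m)}\;=\;\sum_{o\in O_{r,6,q}^\times\,:\,|o|\,\mid\,m}|o|\cdot A(o)^{m/|o|},
\end{equation}
where $S_t^{(m)}:=\sum_{x\in\F_{r^m}}\chi_{\F_{r^m},2}\bigl(x^3+(t^q-t)\bigr)$.

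The classical character-sum evaluation of the number of points on $y^2=x^3+\beta$ gives $S_t^{(m)}=0$ whenever $3\nmid r^m-1$ (then $x\mapsto x^3$ is bijective), and $S_t^{(m)}=-\sum_{i\in\{\pm 1\}}\chi_6^{-i}(t^q-t)\,J_{\F_{r^m}}(\chi_3^i,\chi_2)$ otherwise, using $\chi_2\chi_3=\chi_6^{-1}$ (all characters on $\F_{r^m}$). In the first case no orbit satisfies $|o|\mid m$ either, so \eqref{eq:plan-target} reads $0=0$; henceforth I assume $6\mid r^m-1$. Fourier-inverting $\chi_6^{-i}$ via the identity $\chi(z)=-\frac{\chi(-1)G(\chi,\psi_1)}{|\F|}\sum_{\alpha\in\F^\times}\chi^{-1}(\alpha)\psi_\alpha(z)$, applying the orthogonality $\sum_{t\in\F_{r^m}}\psi_\alpha(t^q-t)=|\F_{r^m}|\cdot\mathbf{1}[\alpha\in\F_q\cap\F_{r^m}]$, and using the Jacobi-to-Gauss identity $J(\chi_3^i,\chi_2)\,G(\chi_6^{-i},\psi_1)=G(\chi_3^i,\psi_1)G(\chi_2,\psi_1)$ of Section~\ref{ss:JacobiSums} together with the twist rule (3) of Section~\ref{ss:GaussSums} converts the left-hand side of \eqref{eq:plan-target} into
$$\chi_{\F_{r^m},2}(-1)\sum_{(i,\alpha)\in\{\pm 1\}\times(\F_q\cap\F_{r^m})^\times}G_{\F_{r^m}}(\chi_3^i,\psi_\alpha)\,G_{\F_{r^m}}(\chi_2,\psi_\alpha).$$

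Next, I apply the Hasse--Davenport relation (Section~\ref{ss:GaussSums}(4)) together with the orbit-size identifications $|\pi_3(o)|=|o|$ and $|\pi_2(o)|=|o|/m_2(o)$ from Section~\ref{ss:orbits} to descend each Gauss sum to its defining field: this gives $G_{\F_{r^m}}(\chi_3^i,\psi_\alpha)\,G_{\F_{r^m}}(\chi_2,\psi_\alpha)=A(o(i,\alpha))^{m/|o(i,\alpha)|}$, where $o(i,\alpha)\in O^\times_{r,6,q}$ is the orbit through $(i,\alpha)$. Grouping the sum by orbits (each contributing $|o|$ equal terms) collapses it to $\sum_{o\,:\,|o|\mid m}|o|\cdot A(o)^{m/|o|}$, matching the right-hand side of \eqref{eq:plan-target} up to the prefactor $\chi_{\F_{r^m},2}(-1)=(-1)^{(r^m-1)/2}\in\{\pm 1\}$.

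The main technical obstacle is this parasitic prefactor. When $\chi_2(-1)=+1$ the identity \eqref{eq:plan-target} follows immediately. When $\chi_2(-1)=-1$ (i.e., $r^m\equiv 3\pmod 4$), the substitution $\alpha\mapsto-\alpha$ combined with $\chi_6^i(-1)=\chi_2(-1)=-1$ forces $\sum_{\alpha\in(\F_q\cap\F_{r^m})^\times}\chi_6^i(\alpha)=0$ for each $i$, so both sides of \eqref{eq:plan-target} vanish independently and the identity still holds. Since both $L(E,T)$ and $\prod_o(1-A(o)T^{|o|})$ are polynomials in $T$ with constant term~$1$, matching the logarithmic coefficients for all $m\geq 1$ yields the claimed factorization.
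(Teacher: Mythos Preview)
Your argument is correct, and the overall architecture---take formal logarithms, match the coefficient of $T^m/m$ by a character-sum identity, and descend via Hasse--Davenport---is exactly that of the paper.  The execution of the character-sum step, however, is organized differently.  The paper first substitutes $u=t^q-t$ and invokes the additive counting identity $|\{t\in\F: t^q-t=u\}|=\sum_{\alpha\in\F\cap\F_q}\psi_\alpha(u)$ (Lemma~\ref{lemma:S}(1)) to transform $\sum_{t}S_t^{(m)}$ into $\sum_{\alpha}S_\F(\lambda,\psi_\alpha)$, and then evaluates the latter directly as $G_\F(\chi_2,\psi_\alpha)\sum_{i}G_\F(\chi_3^i,\psi_\alpha)$ (Lemma~\ref{lemma:S}(2)).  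You instead evaluate $S_t^{(m)}$ for each $t$ in Jacobi-sum form, Fourier-invert the multiplicative character $\chi_6^{-i}(t^q-t)$, and only then sum over $t$.  Both routes land on the same double sum over $(i,\alpha)$, but yours picks up the factor $\chi_6^{-i}(-1)=\chi_{\F_{r^m},2}(-1)$ from the inversion formula; the paper's ordering never produces it.  Your disposal of this sign is valid (when $\chi_2(-1)=-1$ the inner sum $\sum_{\alpha}\chi_6^i(\alpha)$ vanishes by $\alpha\mapsto-\alpha$, so both sides of \eqref{eq:plan-target} are zero), though it would be cleaner to note explicitly that $\chi_6^{\pm1}(-1)=\chi_2(-1)$ since $\chi_6^3=\chi_2$ and $\chi_6(-1)\in\{\pm1\}$.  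The paper's route is marginally more economical precisely because it sidesteps this case split.
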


Note that, as a polynomial in $r^{-s}$, the $L$-function has degree $\sum_{o\in O_{r, 6,q}^\times} |o| = |S_{6,r,q}^\times| = 2(q-1)$. This is consistent with what the Grothendiek--Ogg--Shafarevich formula predicts, namely that the $L$-function has degree $\deg(\NN_E)-4$ where $\NN_E$ is the conductor of $E$ (recall from Section~\ref{ss:reduction} that $\deg\NN_E = 2(q+1)$).

The first, elementary, proof of Theorem~\ref{thm:L-elem} will be given at the end of this section, after proving several lemmas in the next few subsections. 
In Section~\ref{s:L-cohom}, we will provide two more conceptual proofs of this statement (see Theorems~\ref{thm:ST-L} and \ref{thm:AS-L}, as well as Section~\ref{ss:L-compare}).

\begin{lemmas}\label{lemma:S}
Let $\F$ be a finite field of characteristic $p$, and let $\psi$ be a non-trivial additive character of~$\F$.
\begin{enumerate}
\item 
For any $u\in\F$ and any power $q$ of $p$, one has
$$\left|\{t\in\F : t^q-t=u\}\right|=
\sum_{\alpha\in\F\cap\Fq}\psi(\alpha u).$$
\item   
Denote   the non-trivial quadratic character of $\F^\times$ by $\lambda=\chi_{\F,2}$. Consider the sum
\begin{equation}\label{eq:def-S-sum}
    S_\F(\lambda,\psi)=\sum_{x,z\in\F}\lambda(x^3+z)\psi(z).
\end{equation}
Then
$$S_\F(\lambda,\psi)=\begin{cases}
0&\text{if $|\F|\equiv 2\pmod3$}\\
G_\F(\lambda,\psi)\sum_{i\in\{1,2\}}G_\F(\chi_{\F,3}^i,\psi)&
\text{if $|\F|=1\pmod3$}.
\end{cases}$$
\end{enumerate}
\end{lemmas}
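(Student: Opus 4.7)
Both parts rest on orthogonality of additive characters of $\F$, which I would parametrize as $\{\psi_\beta\}_{\beta\in\F}$ following Section~\ref{ss:add-chars}.

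For part~(1), I would expand the indicator of $\{t^q-t=u\}$ by Fourier inversion on $\F$:
\[
\bigl|\{t\in\F:t^q-t=u\}\bigr|=\frac1{|\F|}\sum_{\beta\in\F}\psi_\beta(-u)\sum_{t\in\F}\psi_\beta(t^q-t).
\]
Using the Frobenius-invariance of $\Tr_{\F/\Fp}$, namely $\Tr(\beta t^q)=\Tr(\beta^{1/q}t)$, I rewrite $\psi_\beta(t^q-t)=\psi_p\bigl(\Tr((\beta^{1/q}-\beta)t)\bigr)$; summing over $t\in\F$ yields $|\F|$ when $\beta^{1/q}=\beta$ (equivalently $\beta\in\F\cap\Fq$) and $0$ otherwise. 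A re-indexing $\beta\mapsto-\beta$ (which preserves $\F\cap\Fq$) then collapses orthogonality into the stated formula.

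For part~(2), I would substitute $y=x^3+z$ in the defining double sum, which decouples the two variables:
\[
S_\F(\lambda,\psi)=\Bigl(\sum_{y\in\F}\lambda(y)\psi(y)\Bigr)\Bigl(\sum_{x\in\F}\psi(-x^3)\Bigr)=-G_\F(\lambda,\psi)\sum_{x\in\F}\psi(-x^3),
\]
the first factor being immediate from the definition of the Gauss sum together with $\lambda(0)=0$. If $|\F|\equiv 2\pmod 3$, then $3\nmid|\F^\times|$, so cubing is a bijection of $\F$ and the second factor reduces to $\sum_x\psi(-x)=0$, giving the first case. If instead $|\F|\equiv 1\pmod 3$, the number of cube roots of $y\in\F^\times$ equals $1+\chi_{\F,3}(y)+\chi_{\F,3}^2(y)$, so expanding $\sum_{x\in\F^\times}\psi(-x^3)$ produces three pieces. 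The trivial-character piece contributes $-1$; for each of the cubic-character pieces, the substitution $y\mapsto-y$ introduces a factor of $\chi_{\F,3}^i(-1)=1$ (since $\chi_{\F,3}$ has order $3$ while $(-1)^2=1$) and the result evaluates to $-G_\F(\chi_{\F,3}^i,\psi)$. Restoring the $x=0$ contribution $\psi(0)=1$ cancels the $-1$, so $\sum_{x\in\F}\psi(-x^3)=-G_\F(\chi_{\F,3},\psi)-G_\F(\chi_{\F,3}^2,\psi)$; multiplying by $-G_\F(\lambda,\psi)$ produces the claimed identity.

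Neither half presents a genuine obstacle: both are routine orthogonality manipulations. The only point that requires care is the sign bookkeeping in the $|\F|\equiv 1\pmod 3$ case, where one must use $\chi_{\F,3}(-1)=1$ to ensure that the Gauss sums that emerge carry the original additive character $\psi$ rather than $\psi\circ(-1)$.
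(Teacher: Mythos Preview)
Your proof is correct and follows essentially the same approach as the paper. For part~(1) the paper also appeals to orthogonality of additive characters (deferring the details to a reference), and for part~(2) both arguments use the same ingredients---a decoupling substitution and the character-sum identity for the number of cube roots---though you apply them in the opposite order: you first substitute $y=x^3+z$ to split $S_\F(\lambda,\psi)$ as $-G_\F(\lambda,\psi)\sum_x\psi(-x^3)$ and then expand the cubic sum, whereas the paper first expands the cube-root count and then substitutes $u=y+z$. Your route is marginally more direct, but the two are interchangeable.
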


\begin{proof}
Part (1) is straightforward when $\F$ is an extension of $\Fq$, and the general case 
is proven in
  \cite[Lemma~4.3]{Griffonpp1801}.  (The key point is that the kernel
  and the image of the map $\F\to\F$, $t\mapsto t^q-t$ are orthogonal
  complements with respect to the $\Fp$-bilinear form
  $\<\alpha,\beta\>=\Tr_{\F/\Fp}(\alpha\beta)$.) 
  We now turn to the proof of (2). For any non-trivial additive character $\psi$ on $\F$,  consider
$$S_\F(\lambda,\psi)=\sum_{x,z\in\F}\lambda(x^3+z)\psi(z).$$
Let $\trivcar$ denote the trivial multiplicative character of $\F^\times$.
It is classical that for any $y\in\F$,
$$\big|\big\{ x\in\F :  y=x^3\big\}\big|=\sum_{\theta^3=\trivcar} \theta(y)$$
where the sum runs over characters on $\F^\times$ whose order divides
$3$ (see \cite[Lemma 2.5.21]{CohenNT1}). This allows us to rewrite the
sum $S_\F(\lambda,\psi)$ as 
\begin{align*}
S_\F(\lambda,\psi)
&= \sum_{y\in\F}\sum_{z\in\F} 
  \left(\sum_{\theta^3=\trivcar}\theta(y)\right) \lambda(y+z) \psi(z)\\
&= \sum_{\theta^3=\trivcar}\sum_{y\in\F} \theta(y)
  \left(\sum_{z\in\F} \lambda(y+z) \psi(z)\right) \\
&= \sum_{\theta^3=\trivcar}\sum_{y\in\F} \theta(y)
   \left(\sum_{u\in\F} \lambda(u) \psi(u-y)\right) 
    &{(\text{by setting }u=z+y)}\\
&= \left(\sum_{\theta^3=\trivcar}\sum_{y\in\F} \theta(y)\psi(-y)\right)
   \left(\sum_{u\in\F} \lambda(u) \psi(u)\right)\\
&= \left(\sum_{u\in\F} \lambda(u) \psi(u)\right)
   \left(\sum_{\theta^3=\trivcar}\theta(-1)\sum_{v\in\F}\theta(v)\psi(v)\right) 
&{(\text{by setting } v=-y)}.
\end{align*}
The first sum equals $-G_\F(\lambda,\psi)$ and, for a character
$\theta$ such that $\theta^3=\trivcar$, the sum over $v\in\F$ equals
$-G_\F(\theta,\psi)$.  Moreover, $\theta(-1)=1$ for all $\theta$ such
that $\theta^3=\trivcar$, and $G_\F(\trivcar,\psi)=0$, so we have
$$S_\F(\lambda,\psi)=G_\F(\lambda,\psi)
\sum_{\substack{\theta^3=\trivcar\\\theta\neq\trivcar}}G_\F(\theta,\psi).$$
To conclude the proof, it remains to note that if $|\F|\equiv2\pmod3$,
then there are no non-trivial characters of order 3, so the right hand
side vanishes, while if $|\F|\equiv1\pmod3$, the two non-trivial
characters of order 3 are $\chi_{\F,3}^i$, $i\in\{1,2\}$.
\end{proof}

To ease notation, for the rest of this section we write $\F_n$ for
$\F_{r^n}$, i.e., $\F_n$ is the extension of $\Frr$ of degree $n$.
Fix a non-trivial additive character $\psi_{\F_n}$ of $\F_n$ and for any $\alpha\in\F_n$, let $\psi_{\F_n,\alpha}$ denote the additive character on $\F_n$ defined by $z\in\F_n \mapsto\psi_{\F_n}(\alpha z)$. 

\begin{lemmas}\label{lemma:L-lhs} 
As Taylor series in $T$,
$$-\log L(E,T)=
\sum_{n\ge1}\frac{T^n}n\sum_{\alpha\in\F_n\cap\Fq}
S_{\F_n}(\lambda_{\F_n},\psi_{\F_n,\alpha})$$
where $\lambda_{\F_n}=\chi_{\F_n,2}$ is the non-trivial quadratic
character of $\F_n^\times$ and $S_{\F_n}(\lambda_{\F_n}, \psi_{\F_n, \alpha})$ is the sum defined by equation \eqref{eq:def-S-sum}. 
\end{lemmas}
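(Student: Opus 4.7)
The plan is to expand $\log L(E,T)$ from the Euler product~\eqref{eq:L-elem}, re-interpret the resulting sums of Frobenius traces as a global affine point count on the Weierstrass model of $E$, and then evaluate that count via Lemma~\ref{lemma:S}(1) together with the standard identity $|\{y\in\F_n:y^2=z\}|=1+\lambda_{\F_n}(z)$.

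First, I would apply $-\log(1-\alpha T^d)=\sum_{k\ge 1}\alpha^k T^{kd}/k$ to each good local factor $(1-\alpha_v T^{\deg(v)})(1-\beta_v T^{\deg(v)})$ and collect the contributions with $k\deg(v)=n$. This yields $\log L(E,T) = \sum_{n\ge 1}(T^n/n)\,B_n$ where
$$B_n = \sum_{\substack{v\text{ good}\\ \deg(v)\mid n}}\deg(v)\bigl(\alpha_v^{n/\deg(v)}+\beta_v^{n/\deg(v)}\bigr).$$
Next, I would re-express $B_n$ as a point count on the affine Weierstrass surface. The Galois orbit of a good closed point $v$ of degree $d\mid n$ consists of $d$ elements $t_0\in\A^1(\F_n)$; the fiber of $\EE\to\P^1$ over each is smooth with $r^n-(\alpha_v^{n/d}+\beta_v^{n/d})$ affine $\F_n$-points. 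Summing, and including the finite bad fibers at $t_0\in\Fq\cap\F_n$ (each being a cuspidal cubic $y^2=x^3$ with exactly $r^n$ affine $\F_n$-points, parametrised by $s\mapsto(s^2,s^3)$), I obtain
$$N''_n := \sum_{t_0\in\F_n}\bigl|\{(x,y)\in\F_n^2 : y^2=x^3+t_0^q-t_0\}\bigr| \;=\; r^n\cdot r^n - B_n \;=\; r^{2n}-B_n.$$

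To compute $N''_n$ in a second way, I would rewrite its inner count as $\sum_{u\in\F_n}|\{t_0:t_0^q-t_0=u\}|\cdot|\{(x,y):y^2=x^3+u\}|$, apply Lemma~\ref{lemma:S}(1) to the first factor, and use $|\{y\in\F_n:y^2=z\}|=1+\lambda_{\F_n}(z)$ in the second. This gives
$$N''_n = \sum_{\alpha\in\F_n\cap\Fq}\sum_{x,u\in\F_n}\psi_{\F_n,\alpha}(u)\bigl(1+\lambda_{\F_n}(x^3+u)\bigr).$$
The ``$1$''-part collapses by character orthogonality to $r^{2n}$ (only $\alpha=0$ survives the inner sum over $u$), while after the substitution $z=x^3+u$ the ``$\lambda$''-part is exactly $\sum_{\alpha\in\F_n\cap\Fq} S_{\F_n}(\lambda_{\F_n},\psi_{\F_n,\alpha})$ by~\eqref{eq:def-S-sum} (the $\alpha=0$ term contributes $0$ and may be kept for free). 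Equating the two expressions for $N''_n$ yields $B_n = -\sum_\alpha S_{\F_n}(\lambda_{\F_n},\psi_{\F_n,\alpha})$, and multiplying by $-1$ gives the claim.

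The main delicate point is the bookkeeping in the second step: the cuspidal fibers at $t_0\in\Fq\cap\F_n$ do \emph{not} contribute to $B_n$ (they are additive bad places of $E$), yet each of them contributes exactly $r^n$ affine $\F_n$-points to $N''_n$---which is precisely what is needed for $N''_n$ to be indexed over \emph{all} $t_0\in\F_n$ and thus match the character-sum computation coming from Lemma~\ref{lemma:S}(1). Once this cancellation is in place, the rest reduces to routine character-sum manipulations and a clean comparison with the definition of $S_{\F_n}(\lambda_{\F_n},\psi_{\F_n,\alpha})$.
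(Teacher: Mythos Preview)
Your proof is correct and follows essentially the same approach as the paper, which works directly with the Frobenius trace $A_E(t,n)=-\sum_{x}\lambda_{\F_n}(x^3+t^q-t)$ rather than the full affine point count (so your ``$1$''-part cancellation is absorbed into that identity, and the bad-fiber bookkeeping becomes the observation that $\sum_x\lambda_{\F_n}(x^3)=0$). One cosmetic remark: no substitution $z=x^3+u$ is needed in the last step---your sum $\sum_{x,u}\psi_{\F_n,\alpha}(u)\,\lambda_{\F_n}(x^3+u)$ is already $S_{\F_n}(\lambda_{\F_n},\psi_{\F_n,\alpha})$ after renaming the dummy variable $u$ to $z$ (the substitution $z=x^3+u$ would instead produce $\sum_{x,z}\psi_{\F_n,\alpha}(z-x^3)\lambda_{\F_n}(z)$, which is not what you want).
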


\begin{proof}
In the definition of $L(E, T)$, write the Euler factor at a good place $v$ as
$$\left(1-a_vT^{\deg(v)}+r_vT^{2\deg(v)}\right)
=\left(1-\alpha_vT^{\deg(v)}\right)\left(1-\beta_vT^{\deg(v)}\right).$$
Taking the logarithm of the Euler product \eqref{eq:L-elem} and reordering terms yields that
$$\log L(E,T)=\sum_{n\ge1}\frac{T^n}n
\sum_{\substack{\text{good $v$}\\\deg(v)|n}}
\deg(v)\left(\alpha_v^{n/\deg(v)}+\beta_v^{n/\deg(v)}\right).$$
To obtain this expression, we have used the standard identity between Taylor series:
\begin{equation}\label{eq:log-taylor-exp}
\log(1-\alpha T)=-\sum_{n\ge1}\frac{(\alpha T)^n}{n}.    
\end{equation}
If $t\in\F_n$, define $A_E(t,n)$ to be the integer such that $r^n+1-A_E(t,n)$ is the number of
$\F_n$-rational points on the reduction of $E$ at $t$.  It then follows from
\cite[V.2.3.1]{SilvermanAEC} that
$$ \alpha_v^{n/\deg(v)}+\beta_v^{n/\deg(v)} =
A_E(t,n)$$
for any $t\in\F_n$ lying over $v$.  Thus,
$$L(E,T)=\sum_{n\ge1}\frac{T^n}n
\sum_{\substack{\text{good $t$}\\t\in\F_n}}
A_E(t,n).$$
Denote  the non-trivial quadratic character of
$\F_n^\times$ by $\lambda_{\F_n}$. Then \cite[V.1.3]{SilvermanAEC} asserts that
$$A_E(t,n)=-\sum_{x\in\F_n}\lambda_{\F_n}(x^3+t^q-t).$$
Note that if $t\in\Fq$, then $t^q-t=0$, and the sum on the right hand side
vanishes, so we may drop the restriction ``good $t$'' in the last
expression for $L(E,T)$, i.e.,
$$-\log L(E,T)=\sum_{n\ge1}\frac{T^n}n
\sum_{t\in\F_n}\sum_{x\in\F_n}\lambda_{\F_n}(x^3+t^q-t).$$

Now applying Lemma~\ref{lemma:S} part (1), we get that 
\begin{align*}
\sum_{t\in\F_n}\sum_{x\in\F_n}\lambda_{\F_n}(x^3+t^q-t)
&=\sum_{x\in\F_n}\sum_{u\in\F_n}
\sum_{\alpha\in\F_n\cap\Fq}\psi(\alpha u)\lambda_{\F_n}(x^3+u)\\
&=\sum_{\alpha\in\F_n\cap\Fq}S_{\F_n}(\lambda_{\F_n},\psi_{\F_n,\alpha}).
\end{align*}
Therefore, we have proved, as desired, that
$$-\log L(E,T)=\sum_{n\ge1}\frac{T^n}n\sum_{\alpha\in\F_n\cap\Fq}
S_{\F_n}(\lambda_{\F_n},\psi_{\F_n,\alpha}).$$
\end{proof}

\begin{lemmas}\label{lemma:L-rhs} 
As Taylor series in $T$,
\begin{multline*}
-\log \prod_{o\in O_{r,6,q}^\times}
\left(1-G(\pi_2(o))^{m_2(o)}G(\pi_3(o))T^{|o|}\right)\\
=\sum_{\substack{n\ge1\\r^n\equiv1\,(\mathrm{mod}\,6)}}\frac{T^n}n
\sum_{\alpha\in\F_n\cap\Fq}\sum_{i\in\{1,2\}}
G_{\F_n}(\chi_{\F_n,2},\psi_{\F_n,\alpha})
G_{\F_n}(\chi_{\F_n,3}^i,\psi_{\F_n,\alpha}).
\end{multline*}
\end{lemmas}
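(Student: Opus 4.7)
The plan is to apply the Taylor expansion of $-\log(1-\cdot)$ to each factor on the left, regroup by powers of $T$, and then use the Hasse--Davenport relation (twice) together with a reinterpretation of the sum over orbits as a sum over representatives.

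First, using the identity \eqref{eq:log-taylor-exp} on each factor and setting $n=k|o|$, one obtains
$$-\log\prod_{o\in O_{r,6,q}^\times}\!\bigl(1 - G(\pi_2(o))^{m_2(o)}G(\pi_3(o))T^{|o|}\bigr) = \sum_{n\ge 1} \frac{T^n}{n} \sum_{\substack{o\in O_{r,6,q}^\times \\ |o| \mid n}} |o|\bigl(G(\pi_2(o))^{m_2(o)}G(\pi_3(o))\bigr)^{n/|o|}.$$
Next I rewrite the inner product as a single product of two Gauss sums over $\F_n$. Pick a representative $(i,\alpha)\in(\Z/6\Z)^\times\times\Fqtimes$ of $o$. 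Since $r$ is odd, $\ord^\times(r\bmod 6)=\ord^\times(r\bmod 3)$, so $|\pi_3(o)|=|o|$ and by definition $G(\pi_3(o))=G_{\F_{r^{|o|}}}(\chi_{\F_{r^{|o|}},3}^{\,i\bmod 3},\psi_{\F_{r^{|o|}},\alpha})$; since $m_2(o)|\pi_2(o)|=|o|$, the Hasse--Davenport relation applied to $\F_{r^{|o|}}/\F_{r^{|\pi_2(o)|}}$ yields $G(\pi_2(o))^{m_2(o)} = G_{\F_{r^{|o|}}}(\chi_{\F_{r^{|o|}},2},\psi_{\F_{r^{|o|}},\alpha})$. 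A further application of Hasse--Davenport to $\F_n/\F_{r^{|o|}}$ (of degree $n/|o|$) then converts the $(n/|o|)$-th power into
$$\bigl(G(\pi_2(o))^{m_2(o)}G(\pi_3(o))\bigr)^{n/|o|} = G_{\F_n}(\chi_{\F_n,2},\psi_{\F_n,\alpha})\,G_{\F_n}(\chi_{\F_n,3}^{\,i\bmod 3},\psi_{\F_n,\alpha}).$$

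Finally, I interpret the sum over orbits $o$ with $|o|\mid n$ as a sum over their $|o|$ representatives $(i,\alpha)$. The condition $|o|\mid n$ is equivalent to $r^n(i,\alpha)=(i,\alpha)$, i.e., $r^n\equiv 1\pmod 6$ \emph{and} $\alpha\in\F_n$. Using that $(\Z/6\Z)^\times\to(\Z/3\Z)^\times$ is a bijection with image $\{1,2\}$, the sum over $i\in(\Z/6\Z)^\times$ becomes the sum over $i\in\{1,2\}$; and enlarging the $\alpha$-range from $\F_n\cap\Fqtimes$ to $\F_n\cap\Fq$ is harmless since $\psi_{\F_n,0}$ is trivial, so the two Gauss-sum factors both vanish when $\alpha=0$. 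Combining these manipulations produces the right-hand side. The main obstacle is careful bookkeeping of the two successive Hasse--Davenport applications, together with the identifications $|\pi_3(o)|=|o|$ and $|\pi_2(o)|m_2(o)=|o|$ which make the factor-by-factor match work, and the verification that the condition $r^n\equiv 1\pmod 6$ on the right-hand side is precisely what the inner sum on the left-hand side picks out.
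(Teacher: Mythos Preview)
Your proof is correct and follows essentially the same approach as the paper's: expand the logarithm, regroup by $T^n$, apply Hasse--Davenport to convert the powers $\omega(o)^{n/|o|}$ into Gauss sums over $\F_n$, and reinterpret the orbit sum as a sum over representatives via the equivalence $|o|\mid n\iff r^n\equiv1\pmod6$ and $\alpha\in\F_n$. You are in fact slightly more careful than the paper in two places: you split the Hasse--Davenport step into two applications (first $\F_{r^{|o|}}/\F_{r^{|\pi_2(o)|}}$, then $\F_n/\F_{r^{|o|}}$) rather than one, and you explicitly justify the passage from $\alpha\in\F_n\cap\Fqtimes$ to $\alpha\in\F_n\cap\Fq$, which the paper leaves implicit.
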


\begin{proof} 
To lighten the notation, we write $\omega(o) := G(\pi_2(o))^{m_2(o)}G(\pi_3(o))$ for any $o\in O_{r,6,q}^\times$.
By identity \eqref{eq:log-taylor-exp}, we have 
\begin{equation*}
-\log \prod_{o\in O_{r,6,q}^\times}
\left(1- \omega(o)T^{|o|}\right)\\
=\sum_{n\ge1}\frac{T^n}n\sum_{\substack{o\in
    O_{r,6,q}^\times\\\text{$|o|$ divides $n$}}}
|o| \omega(o)^{n/|o|}. 
\end{equation*}
Write $\F_o$ for $\F_{r^{|o|}}$, the extension of $\Frr$ of degree
$|o|$.  
Pick a representative $(i,\alpha)\in o$. By definition, we have 
$G(\pi_3(o))=G_{\F_o}(\chi_{\F_o,3}^i,\psi_{\F_o,\alpha})$
and the Hasse--Davenport relation
(Section~\ref{ss:GaussSums}) yields that
$$G(\pi_3(o))^{n/|o|}=
G_{\F_n}(\chi_{\F_n,3}^i,\psi_{\F_n,\alpha}).$$
Similarly, using the definition and the Hasse--Davenport relation, we have
$$G(\pi_2(o))^{m_2(o)n/|o|}=G_{\F_n}(\chi_{\F_n,2},\psi_{\F_n,\alpha}).$$
Note that $|o|$ divides $n$ if and only if $r^n\equiv1\pmod6$ and
$\alpha\in\F_n$. 
Thus, 
\begin{multline*}
-\log \prod_{o\in O_{r,6,q}^\times}
\left(1-\omega(o) T^{|o|}\right)\\
=\sum_{\substack{n\ge1\\r^n\equiv1\,(\mathrm{mod}\,6)}}\frac{T^n}n
\sum_{\alpha\in\F_n\cap\Fq}\sum_{i\in\{1,2\}}
G_{\F_n}(\chi_{\F_n,2},\psi_{\F_n,\alpha})G_{\F_n}(\chi_{\F_n,3}^i,\psi_{\F_n,\alpha}).
\end{multline*}
This completes the proof of the lemma.
\end{proof}

\begin{proof}[Proof of Theorem~\ref{thm:L-elem}]
According to Lemma~\ref{lemma:L-lhs},
$$-\log L(E,T)=
\sum_{n\ge1}\frac{T^n}n\sum_{\alpha\in\F_n\cap\Fq}
S_{\F_n}(\lambda_{\F_n},\psi_{\F_n,\alpha}),$$
and part (2) of Lemma~\ref{lemma:S}  
says that
$$S_{\F_n}(\lambda_{\F_n},\psi_{\F_n,\alpha})=\begin{cases}
0&\text{if $|\F_n|=r^n\equiv2\pmod3$}\\
\sum_{i\in\{1,2\}}
G_{\F_n}(\chi_{\F_n,2},\psi_{\F_n,\alpha})
G_{\F_n}(\chi_{\F_n,3}^i,\psi_{\F_n,\alpha})
&\text{if $|\F_n|=r^n\equiv1\pmod3$.}
\end{cases}$$
Noting that $r^n\equiv1\pmod3$ if and only if $r^n\equiv1\pmod6$,
we have
$$-\log L(E,T)
=\sum_{\substack{n\ge1\\r^n\equiv1\,(\mathrm{mod}\,6)}}\frac{T^n}n
\sum_{\alpha\in\F_n\cap\Fq}\sum_{i\in\{1,2\}}
G_{\F_n}(\chi_{\F_n,2},\psi_{\F_n,\alpha})
G_{\F_n}(\chi_{\F_n,3}^i,\psi_{\F_n,\alpha}).
$$
By Lemma~\ref{lemma:L-rhs}, the expression on the right hand side is 
$$-\log \prod_{o\in O_{r,6,q}^\times}
\left(1-G(\pi_2(o))^{m_2(o)}G(\pi_3(o))T^{|o|}\right),$$
thus concluding the proof of the Theorem.
\end{proof}

\section{Auxiliary curves}\label{s:curves}
In this section, we record some well-known facts about the geometry of
certain curves to be used in the sequel.

\subsection{Cohomology}
Throughout this section and the next, we denote by
$H^n(-)$ any rational Weil cohomology theory (with coefficients in an algebraically
closed field) for varieties over $\Frr$, for example
$\ell$-adic cohomology $H^n(-\times_{\Frr}\overline{\F}_r,\Qlbar)$ or
crystalline cohomology $H^n(-/W)\tensor_{W(\Frr)}\Qpbar$.  (See, for
example, \cite{Kleiman68}.)  Among other things, these groups admit a
functorial action of the geometric Frobenius $\Fr_r$.

Here is a well-known lemma about characteristic polynomials in
induced representations.  See \cite[Lemma~1.1]{Gordon79} or
\cite[Lemma~2.2]{Ulmer07b} for a proof.

\begin{lemma}\label{lemma:charpoly}
Let $V$ be a finite-dimensional vector space with subspaces $W_i$
indexed by $i\in\Z/m\Z$ such that $V=\oplus_{i\in\Z/m\Z}W_i$.  Let
$\phi:V\to V$ be a linear transformation such that $\phi(W_i)\subset
W_{i+1}$ for all $i\in\Z/m\Z$.  Then
\begin{equation*}
\det(1-\phi T|V)=\det(1-\phi^{m}T^m|W_0).
\end{equation*}
\end{lemma}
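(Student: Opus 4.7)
The plan is to pass from the determinant identity to a power-series identity for traces of iterates of $\phi$, via the formal identity
\[
\det(1-T\psi|U) \;=\; \exp\left(-\sum_{n\geq 1}\frac{\tr(\psi^n|U)}{n}T^n\right),
\]
valid for any endomorphism $\psi$ of a finite-dimensional vector space $U$. Taking logarithms of both sides of the claim, it suffices to establish
\[
\sum_{n\geq 1}\frac{\tr(\phi^n|V)}{n}T^n \;=\; \sum_{k\geq 1}\frac{\tr((\phi^m)^k|W_0)}{k}T^{mk},
\]
after which exponentiation recovers the stated equality.

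Write $\phi_i := \phi|_{W_i}\colon W_i \to W_{i+1}$ (indices mod $m$). The power-series equality breaks into two pieces. First, $\tr(\phi^n|V) = 0$ whenever $m \nmid n$: since $\phi^n$ maps $W_i$ into $W_{i+n}$, and $W_{i+n} \neq W_i$ in this case, the matrix of $\phi^n$ in a basis adapted to $V = \bigoplus_{i\in\Z/m\Z} W_i$ has no diagonal block, so its trace vanishes. Second, for $n = mk$, I must show that $\tr(\phi^{mk}|V) = m\cdot\tr((\phi^m)^k|W_0)$. Since $\phi^{mk}$ preserves each $W_i$, this amounts to showing $\tr((\phi^m)^k|W_i) = \tr((\phi^m)^k|W_0)$ for every $i$, which I would establish by writing $X := \phi_{i-1}\circ\cdots\circ\phi_0$ and $Y := \phi_{m-1}\circ\cdots\circ\phi_i$ and observing that $\phi^m|_{W_0} = XY$ while $\phi^m|_{W_i} = YX$; hence $(\phi^m|_{W_0})^k$ and $(\phi^m|_{W_i})^k$ are $(XY)^k$ and $(YX)^k$, which have equal traces by the cyclic invariance $\tr(AB) = \tr(BA)$ that holds even for rectangular $A$ and $B$.

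The only real obstacle is the legitimacy of the log/exp manipulation in positive characteristic, where dividing the trace sums by $n$ is suspect. I would handle this by a Zariski-density argument: both sides of the desired identity are polynomials in $T$ whose coefficients are polynomial functions of the matrix entries of the $\phi_i$, so it is enough to prove the identity for universal parameters over $\Z$, where after base-change to $\Q$ the formal power-series manipulation is valid. Alternatively, Newton's identities express the coefficients of the characteristic polynomial as universal integer polynomials in the power-sum traces $\tr(\psi^n|U)$, which makes the trace identity directly equivalent to the determinant identity without any need to invert integers. Aside from this technicality, the argument is routine; in particular, allowing the $W_i$ to have different dimensions introduces no difficulty, since the rectangular form of $\tr(AB) = \tr(BA)$ is all that is required.
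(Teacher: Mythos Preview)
Your argument is correct. The paper does not actually prove this lemma; it only cites \cite[Lemma~1.1]{Gordon79} and \cite[Lemma~2.2]{Ulmer07b}, so there is no in-paper proof to compare against. The trace-identity approach you give (reduce to $\tr(\phi^n|V)=0$ for $m\nmid n$ and $\tr(\phi^{mk}|V)=m\,\tr((\phi^m)^k|W_0)$ via cyclic invariance of trace, then invoke the exponential formula) is the standard one and is essentially what appears in the cited references; your handling of positive characteristic by viewing both sides as universal polynomial identities over $\Z$ is the correct fix.

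One cosmetic slip: with $X=\phi_{i-1}\circ\cdots\circ\phi_0:W_0\to W_i$ and $Y=\phi_{m-1}\circ\cdots\circ\phi_i:W_i\to W_0$, one has $\phi^m|_{W_0}=YX$ and $\phi^m|_{W_i}=XY$, the reverse of what you wrote. This is immaterial to the argument, since you only use $\tr((XY)^k)=\tr((YX)^k)$.
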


\subsection{An elliptic curve}\label{ss:E0}
We have already introduced the elliptic curve
$$E_0:\quad w^2=z^3+1$$
over $\Frr$.  The displayed equation defines a smooth affine curve, and
there is a unique point at infinity on $E_0$ which we denote by $O\in
E_0$. 

The curve $E_0$ carries an action of $\mu_6$ via
$\zeta(z,w)=(\zeta^2z,\zeta^3w)$.  The character group of $\mu_6$ is
$\Z/6\Z$.  It is well known that $H^1(E_0)$ has dimension 2, and
that under the action of $\mu_6$, it decomposes as the direct sum of
two lines corresponding to the subspaces where $\zeta\in\mu_6$ acts by
$\zeta$ and $\zeta^{-1}$ (i.e., corresponding to the characters
indexed by $\pm1\in\Z/6\Z$):
\begin{equation}\label{eq:H1(E0)-decomp}
H^1(E_0)=H^1(E_0)^{(1)}\oplus H^1(E_0)^{(-1)}.
\end{equation}
Also, powers of $\Fr_r$ act on the two subspaces as $\<r\>$ acts on
$\{\pm1\}=(\Z/6\Z)^\times\subset\Z/6\Z$. 

More explicitly, if $r\equiv1\pmod6$, so that $\<r\>$ has two orbits
on $(\Z/6\Z)^\times$, then $\Fr_r$ preserves the two subspaces, and
the corresponding eigenvalues are
$$J(\{1\})=J_\Frr(\chi_{\Frr,6}^{-3},\chi_{\Frr,6}^{-2})\quad\text{and}\quad
J(\{-1\})=J_\Frr(\chi_{\Frr,6}^{3},\chi_{\Frr,6}^{2}),$$
where the Jacobi sums are as defined in equation \eqref{eq:def-J(o)}.

%
%

If $r\equiv5\pmod6$, so that $\<r\>$ has a unique orbit on
$(\Z/6\Z)^\times$, then $\Fr_r$ exchanges the two
subspaces, and the eigenvalues of $\Fr_r^2$ are both
$$J(\{1,-1\})=J_{\F_{r^2}}(\chi_{\F_{r^2},6}^{-3},\chi_{\F_{r^2},6}^{-2})
=J_{\F_{r^2}}(\chi_{\F_{r^2},6}^{3},\chi_{\F_{r^2},6}^{2}).$$

Finally, applying Lemma~\ref{lemma:charpoly}, we find that
$$\det\left(1-T\Fr_r\left|H^1(E_0)\right.\right)=
\prod_{o\in N_{r,6}}\left(1-J(o)T^{|o|}\right).$$

We remark that this result together with the values of $\ord(J(o))$
recorded in Section~\ref{ss:J(o)} are compatible with the well-known
fact that $E_0$ is ordinary if $p\equiv1\pmod6$ and supersingular if
$p\equiv-1\pmod6$.

\subsection{Artin--Schreier curves}\label{ss:Cnq}
For a positive integer $n$ relatively prime to $p$, let $C_{n,q}$ 
be the smooth projective curve over $\Frr$ defined by the equation
$$C_{n,q}:\quad u^n=t^q-t.$$
(We also use the equation $w^n=z^q-z$ when  more than one instance of
$C_{n,q}$ is under discussion.  Only $n=2,3,6$ will be used later in
this paper.)  The displayed equation defines a smooth affine curve, and
there is a unique point at infinity on $C_{n,q}$ which we denote by
$\infty\in C_{n,q}$.

The curve $C_{n,q}$ carries actions of $\mu_{n}$ via
$\zeta(t,u)=(t,\zeta u)$, and of $\Fq$ via $\alpha(t,u)=(t+\alpha,u)$.
(In fact, it carries an action of the larger group
$\Fq\sdp\mu_{n(q-1)}$, where $\zeta\in\mu_{n(q-1)}$ acts via
$\zeta(t,u)=(\zeta^nt,\zeta u)$.  In this section and the next, we
will only need the action of the subgroup $\mu_n\times\Fq$.  The
action of the larger group will be useful in Section~\ref{s:sha-alg}.)
The character group of $\mu_n\times\Fq$ is isomorphic to
$\Z/n\Z\times\Fq$.

The cohomology group $H^1(C_{n,q})$ has dimension $(q-1)(n-1)$, and
under the action of $\mu_n\times\Fq$, it decomposes into lines where
$\mu_n$ and $\Fq$ act through their non-trivial characters.  
(This is proven for $q=p$ in \cite[Cor.~2.2]{Katz81}, and the
arguments there generalize straightfowardly to the case $q=p^f$.)  In
particular, the subspace of $H^1(C_{n,q})$ where $\mu_n$ acts via a
given non-trivial character has dimension $q-1$, and the subspace
where $\Fq$ acts via a given non-trivial character has dimension
$n-1$.

Recall from Section~\ref{ss:orbits} that
$S=S_{n,q}:=\left(\Z/n\Z\setminus\{0\}\right)\times\Fqtimes$ and that
$O_{r,n,q}$ denotes the set of orbits of the action of $\<r\>$ on $S$.
We index the characters of $\mu_n\times\Fq$ (with values in the
coefficient field of our cohomology theory) which are non-trivial on
both factors by $S$.
The subspace of $H^1(C_{n,q})$ where $\mu_n\times\Fq$ acts via the
character indexed by $(i,\alpha)$ will be denoted by
$H^1(C_{n,q})^{(i,\alpha)}$. 
We thus obtain a direct sum decomposition of $H^1(C_{n,q})$ into lines as follows: 
\begin{equation}\label{eq:H1(Cnq)-decomp}
H^1(C_{n,q})=\bigoplus_{(i,\alpha)\in S_{n,q}}H^1(C_{n,q})^{(i,\alpha)}.    
\end{equation}
Katz \cite[Cor.~2.2]{Katz81} further gave a description of the action of
Frobenius on the cohomology $H^1(C_{n,q})$: the Frobenius $\Fr_r$ sends the subspace
indexed by $(i,\alpha)$ to the subspace indexed by
$(ri,\alpha^{1/r})$. 
If $o\in O_{r,n,q}$ is the orbit through
$(i,\alpha)$, then the $|o|$-th iterate $\Fr_r^{|o|}$ stabilizes the subspace
$H^1(C_{n,q})^{(i,\alpha)}$ (which is a line) and the eigenvalue of
  $\Fr_r^{|o|}$ on $H^1(C_{n,q})^{(i,\alpha)}$ is the Gauss sum
$$G(o):=G_{\F}(\chi_{\F,n}^i,\psi_\alpha)$$
where $\F=\F_{r^{|o|}}$.  (Again, Katz treated the case $q=p$, but the
generalization is straightforward.)  


Applying Lemma~\ref{lemma:charpoly}, we have
$$\det\left(1-T\Fr_r\left|H^1(C_{n,q})\right.\right)=
\prod_{o\in O_{r,n,q}}\left(1-G(o)T^{|o|}\right).$$

We remark that this result together with the values of $\ord(G(o))$
recorded in Section~\ref{ss:G(o)} are compatible with the well-known
fact that $C_{2,q}$ is supersingular, and they show that $C_{3,q}$ is
supersingular when $p\equiv-1\pmod6$ and neither supersingular nor
ordinary if $p\equiv1\pmod6$.  (In this last case, the slopes are
$1/3$ and $2/3$, both with multiplicity $q-1$, 
cf.~\cite[\S8.3]{PriesUlmer16}.)

\subsection{Fermat curves}\label{ss:Fd}
For a positive integer $d$ prime to $p$, let $F_d$ be the Fermat curve of
degree $d$ over $\Frr$.  This is by definition the smooth, projective
curve in $\P^2$ given by the homogeneous equation
$$F_d:\quad X_0^d+X_1^d+X_2^d=0.$$
The genus of $F_d$ is $(d-1)(d-2)/2$, so $H^1(F_d)$ has dimension
$(d-1)(d-2)$.  The curve $F_d$ carries an action of $(\mu_d)^3/\mu_d$
where the three copies of $\mu_d$ in the numerator act by
multiplication on the three coordinates, and the diagonally embedded
$\mu_d$ acts trivially.  Under the action of this group, $H^1(F_d)$
decomposes into lines on which each of the factors $\mu_d$ acts
non-trivially and the diagonally embedded $\mu_d$ acts trivially.
There are $(d-1)(d-2)$ such characters.  The action of Frobenius on
$H^1(F_d)$ is given by Jacobi sums.  Since we will not need the
cohomology of $F_d$ later in the paper, we omit the details.

\section{Domination by a product of curves}\label{s:domination}
In this section we define the Weierstrass and N\'eron models $\WW$ and
$\EE$ of $E$ and relate them to products of curves.  Throughout,
unless explicitly indicated otherwise by the notation, products of
varieties are over $\Frr$ (i.e., $\times$ means $\times_\Frr$).

Our ultimate aim is to compute the relevant part of the cohomology of
a model $\EE$ of $E$ by showing that $\EE$ is birational to the
quotient of a product of curves by a finite group.

\subsection{Models}
Let $\WW\to\P^1_{\Frr}$ be the Weierstrass model of $E$ over $K$, i.e., the
surface fibered over~$\P^1$ whose fibers are the plane cubic
reductions of $E$ at the places of $K$.  More precisely, let 
$$d=\deg(\omega_E)=\lceil q/6\rceil
=\begin{cases}
(q+5)/6&\text{if $q\equiv1\pmod6$}\\
(q+1)/6&\text{if $q\equiv5\pmod6$},
\end{cases}$$
and define $\WW$ by glueing the surfaces
$$y^2z=x^3+(t^q-t)z^3\subset \P^2_{x,y,z}\times\A^1_t$$
and
$$y^{\prime 2}z'=x^{\prime3}+(t^{\prime 6d-q}-t^{\prime
  6d-1})z^{\prime3}
\subset \P^2_{x',y',z'}\times\A^1_{t'}$$
via the map $([x',y',z'],t')=([x/t^{2d},y/t^{3d},z],1/t)$.  Then $\WW$
is a irreducible, normal, projective surface, and projection onto the
$t$ and $t'$ coordinates defines a morphism $\WW\to\P^1$ whose generic
fiber is $E$.

When $q\equiv5\pmod6$, $\WW$ is a regular surface (i.e., is smooth
over $\Frr$), and we define $\EE=\WW$.  When $q\equiv1\pmod 6$, $\WW$
has a singularity at the point $([x',y',z'],t')=([0,0,1],0)$ and is
regular elsewhere.  In this case, we define $\EE$ as the minimal
desingularization of $\WW$.  (The desingularization introduces 8 new
components.)

The reduction types of $\EE$ at closed points of $\P^1$ (i.e. at places of $K$) 
were recorded in Section~\ref{ss:reduction}.

\subsection{Sextic twists}\label{ss:sextic-DPC}
We saw above that $E$ becomes isomorphic to a constant curve after
extension of $K$ to $L=K[u]/(u^6=t^q-t)$.  Geometrically, this means
that $\EE$ is birational to a quotient of $E_0\times C_{6,q}$.  In
this subsection, we make this statement more explicit and deduce a
cohomological consequence.

Let $\mu_6$ act on $E_0\times C_{6,q}$ ``anti-diagonally,'' i.e., via
$\zeta(z,w,t,u)=(\zeta^2z,\zeta^3w,t,\zeta^{-1}u)$.  Define a rational
map $E_0\times C_{6,q}\ratto\WW$ by 
$$(z,w,t,u)\mapsto\left([x,y,z],t\right)=\left([zu^2,wu^3,1],t\right).$$
It is obvious that this map factors through the quotient
$\SS:=(E_0\times C_{6,q})/\mu_6$ and so we have a commutative diagram
\begin{equation*}
\xymatrix{\SS\ar@{-->}[r]\ar[d]&\WW\ar[d]\\
C_{6,q}/\mu_6\ar@{=}[r]&\P^1_t}
\end{equation*}
where the bottom horizontal arrow is the canonical isomorphism
$C_{6,q}/\mu_6\cong\P^1_t$ and the left vertical arrow is induced by
the projection onto $C_{6,q}$.

Now let $\tilde\SS\to\SS$ be a blow-up so that $\tilde\SS$ is smooth and
$\SS\ratto\WW$ induces a morphism $\tilde\SS\to\EE$.  (This can
be made completely explicit in terms of the fixed points of the action of
$\mu_6$ and the formula for the rational map
$E_0\times C_{6,q}\ratto\WW$, but the details will not be important
for our analysis.)  The diagram above then extends to
\begin{equation*}
\xymatrix{\tilde\SS\ar[r]\ar[d]&\EE\ar[d]\\
\SS\ar@{-->}[r]\ar[d]&\WW\ar[d]\\
C_{6,q}/\mu_6\ar@{=}[r]&\P^1_t.}
\end{equation*}

The following  encapsulates everything we need to know
about the geometry of $\tilde\SS\to\EE$.

\begin{propss} \mbox{}
  \begin{enumerate}
  \item The strict transform of $(O\times C_{6,q})/\mu_6$ in $\tilde\SS$
    maps to the zero section of $\EE$.
\item The strict transform of $(E_0\times \infty)/\mu_6$ in $\tilde\SS$
    maps to a fiber of $\EE\to\P^1$.
\item Every component of the exceptional divisor of $\tilde\SS\to\SS$
  maps into a fiber of $\EE\to\P^1$.
  \end{enumerate}
\end{propss}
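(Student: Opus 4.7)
The strategy is to exploit the commutativity of the two natural projections to $\P^1_t$ coming from the diagram. First, I would rewrite the rational map $E_0\times C_{6,q}\ratto\WW$ in projective coordinates on $E_0\subset\P^2$ using the homogeneous equation $\bar w^2\bar s=\bar z^3+\bar s^3$, so that $O=[0:1:0]$. A short verification—using $u^6=t^q-t$ to check that the Weierstrass equation of $\WW$ holds—shows that the map is
$$([\bar z:\bar w:\bar s],(t,u))\longmapsto\bigl([\bar z u^2:\bar w u^3:\bar s],\,t\bigr).$$
Next I would check that the projection $\SS\to C_{6,q}/\mu_6=\P^1_t$ and the structural morphism $\WW\to\P^1_t$ agree on the dense open locus where the rational map is defined. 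After lifting along the blow-up $\tilde\SS\to\SS$, the two induced morphisms $\tilde\SS\to\P^1_t$ (factoring through $\EE$ or through $\SS$) coincide on a dense open of $\tilde\SS$ and both target the separated scheme $\P^1_t$, so they are equal globally.

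Part (1) then follows by direct substitution. For any $(O,(t,u))$ with $(t,u)$ in the affine locus of $C_{6,q}$, the image under the above map is $([0:u^3:0],t)=([0:1:0],t)$, which is precisely the zero section of $\WW$. The strict transform of $(O\times C_{6,q})/\mu_6$ in $\tilde\SS$ dominates this open locus; since the zero section is a closed subscheme of $\EE$ and $\tilde\SS\to\EE$ is a morphism, the entire strict transform maps into the zero section.

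Parts (2) and (3) each reduce to the commutativity established above, by tracking which points of $\P^1_t$ are involved. For (2), the closed subvariety $E_0\times\infty\subset E_0\times C_{6,q}$ lies entirely over $\infty\in C_{6,q}/\mu_6$, and so do its image in $\SS$ and its strict transform in $\tilde\SS$; by commutativity, the image in $\EE$ is contained in the fiber of $\EE\to\P^1$ over $\infty$. For (3), each irreducible component of the exceptional divisor of $\tilde\SS\to\SS$ lies over a single closed point of $\SS$—either a quotient singularity coming from a fixed point of the $\mu_6$-action on $E_0\times C_{6,q}$, or a base point of the rational map $\SS\ratto\WW$—and that point in turn lies over a single closed point of $\P^1_t$. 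By commutativity, the image in $\EE$ is contained in a single fiber.

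The only bookkeeping step of substance is writing out the rational map in projective coordinates and checking that the Weierstrass equation is satisfied; no deeper geometric obstacle is expected, as each of the three assertions then reduces to a one-line observation about which $t$-value the relevant subvariety sits over.
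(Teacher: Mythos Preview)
Your proposal is correct and follows essentially the same approach as the paper: the authors dispose of parts (1) and (2) in one line as ``obvious from the formula defining $E_0\times C_{6,q}\ratto\WW$,'' and prove part (3) by exactly your commutativity argument, observing that a component of the exceptional divisor lies over a single point of $C_{6,q}/\mu_6\cong\P^1_t$ and hence maps into a fiber of $\EE\to\P^1_t$. Your writeup is simply more explicit about the projective coordinates and about why the two compositions to $\P^1_t$ agree.
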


\begin{proof}
  The first two points are obvious from the formula defining
  $E_0\times C_{6,q}\ratto\WW$.  The third point follows by examining the
  outer rectangle of the last displayed diagram.  Indeed, if $E$ is a
  component of the exceptional divisor of $\tilde\SS\to\SS$, then $E$
  lies over a single point of $C_{6,q}/\mu_6\cong\P^1_t$ and thus maps
  to a fiber of $\EE\to\P^1_t$.
\end{proof}

Let $T\subset H^2(\EE)$ be the subspace spanned by the
  classes of the zero section and components of fibers of
  $\EE\to\P^1$.  This is the subspace Shioda calls the ``trivial
  lattice'' (see \cite{Shioda92}).

\begin{corss}\label{cor:sextic-cohom}  
There is a canonical isomorphism
$$H^2(\EE)/T\cong \left(H^1(E_0)\tensor H^1(C_{6,q})\right)^{\mu_6}.$$
Here the exponent $\mu_6$ indicates the subspace invariant under the
anti-diagonal action of $\mu_6$.
\end{corss}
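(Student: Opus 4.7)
The plan is to compute $H^2(\EE)/T$ by transporting the K\"unneth decomposition of $H^2(E_0\times C_{6,q})$ through the chain of surfaces $E_0\times C_{6,q}\to\SS\xleftarrow{\sigma}\tilde\SS\xrightarrow{\tau}\EE$, where $\sigma$ is the desingularization and $\tau$ the induced birational morphism. Since $\mu_6$ is finite, pullback along the quotient $\pi\colon E_0\times C_{6,q}\to\SS$ identifies $H^2(\SS)\cong H^2(E_0\times C_{6,q})^{\mu_6}$ with rational coefficients. Because any automorphism of a smooth projective variety acts trivially on its top cohomology, $\mu_6$ acts trivially on $H^2(E_0)$ and $H^2(C_{6,q})$, so K\"unneth yields
\[ H^2(\SS)\cong \Q_\ell[A]\oplus (H^1(E_0)\otimes H^1(C_{6,q}))^{\mu_6}\oplus \Q_\ell[F], \]
where $[A]$ and $[F]$ denote the classes in $H^2(\SS)$ of $A=(O\times C_{6,q})/\mu_6$ and $F=(E_0\times\infty)/\mu_6$.

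Passing to $\tilde\SS$, the morphism $\sigma$ (resolving the quotient singularities of $\SS$) and the birational morphism $\tau$ of smooth projective surfaces each induce, in rational cohomology, a direct-sum decomposition
\[ H^2(\tilde\SS)=\sigma^*H^2(\SS)\oplus V_\sigma=\tau^*H^2(\EE)\oplus V_\tau, \]
where $V_\sigma,V_\tau$ are spanned by the classes of their respective exceptional curves, $\tau_*\tau^*=\mathrm{id}$, and $\tau_*$ vanishes on $V_\tau$.  Consider the composition
\[ \Phi\colon H^2(\SS)\xrightarrow{\sigma^*}H^2(\tilde\SS)\xrightarrow{\tau_*}H^2(\EE)\twoheadrightarrow H^2(\EE)/T. \]
By parts~(1)--(2) of the Proposition, $\tau_*\sigma^*[A]$ equals the class of the zero section plus a correction lying in $T$, and $\tau_*\sigma^*[F]$ equals the class of a fiber plus a correction in $T$; both therefore belong to $T$, so $\Phi$ factors through a canonical map
\[ \bar\Phi\colon (H^1(E_0)\otimes H^1(C_{6,q}))^{\mu_6}\longrightarrow H^2(\EE)/T. \]

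It remains to prove that $\bar\Phi$ is bijective.  Surjectivity is short: any $x\in H^2(\EE)$ equals $\tau_*\tau^*x$, and decomposing $\tau^*x=\sigma^*y_0+y_1$ with $y_1\in V_\sigma$ gives $x=\tau_*\sigma^*y_0+\tau_*y_1$, where $\tau_*y_1\in T$ by part~(3) of the Proposition.  For injectivity, suppose that $\omega\in(H^1(E_0)\otimes H^1(C_{6,q}))^{\mu_6}$ satisfies $\tau_*\sigma^*\omega\in T$.  Then $\sigma^*\omega\in\tau^*T+V_\tau$, and the crux of the argument is to verify the inclusion
\[ (\tau^*T+V_\tau)\cap\sigma^*H^2(\SS)\ \subseteq\ \sigma^*\bigl(\Q_\ell[A]+\Q_\ell[F]\bigr), \]
whence $\omega\in\langle[A],[F]\rangle$, and thus $\omega=0$ as the middle K\"unneth summand is disjoint from $\langle[A],[F]\rangle$ in $H^2(\SS)$.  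The principal obstacle is precisely this inclusion: one must check, for each generator of $T$ (the zero section and every component of every fiber of $\EE$), that its $\tau$-pullback lies in $V_\sigma+V_\tau+\sigma^*\langle[A],[F]\rangle$.  This is where part~(3) of the Proposition is indispensable, since it controls the image under $\tau$ of every $\sigma$-exceptional curve, and it dovetails with the explicit description of the bad fibers of $\EE$ given in Section~\ref{ss:reduction}.
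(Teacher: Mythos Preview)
Your surjectivity argument is correct and essentially matches the paper's, but your injectivity argument is incomplete.  You correctly reduce injectivity to the inclusion
\[
(\tau^*T+V_\tau)\cap\sigma^*H^2(\SS)\ \subseteq\ \sigma^*\bigl(\Q_\ell[A]+\Q_\ell[F]\bigr),
\]
but you do not verify it; you only assert that ``one must check'' it for each generator of $T$.  This is not a triviality: when $q\equiv1\pmod6$ the fiber of $\EE$ over $t=\infty$ is of type $\II^*$ and has nine components, and the Proposition tells you only where $\sigma$-exceptional curves go under $\tau$, not how the individual components of bad fibers pull back under $\tau$.  Nothing in Section~\ref{ss:reduction} or part~(3) of the Proposition gives you this without further work, so as written the argument has a real gap.

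The paper sidesteps this entirely.  Having established the surjection
\[
\left(H^1(E_0)\otimes H^1(C_{6,q})\right)^{\mu_6}\twoheadrightarrow H^2(\EE)/T,
\]
it simply counts dimensions: the left side has dimension $2(q-1)$ by the decompositions recorded in Section~\ref{s:curves}, and the right side has dimension $\deg(\NN_E)-4=2(q-1)$ by the Grothendieck--Ogg--Shafarevich formula.  Hence the surjection is a bijection.  This avoids any fiber-by-fiber analysis of $\tau$ and is the intended route; I recommend you replace your direct injectivity argument with this dimension count.
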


\begin{proof}
  The dominant morphism $\tilde\SS\to\EE$ induces a surjection
  $H^2(\tilde\SS)\to H^2(\EE)$.  Using the K\"unneth formula, taking
  invariants, and using the blow-up formula, we obtain a canonical
  isomorphism 
\begin{align*}
H^2(\tilde\SS)&\cong H^2(\SS)\oplus B\\
&\cong H^2(E_0\times C_{6,q} / \mu_6)\oplus B\\
&\cong H^2(E_0\times C_{6,q})^{\mu_6}\oplus B\\
&\cong \left(H^1(E_0)\tensor H^1(C_{6,q})\right)^{\mu_6}
\oplus \left(H^0(E_0)\tensor H^2(C_{6,q})\right)
\oplus \left(H^2(E_0)\tensor H^0(C_{6,q})\right)
\oplus B
\end{align*}
where $B$ denotes the subspace spanned by the classes of components of
the exceptional divisor of $\tilde\SS\to\SS$.

The proposition shows that $H^0(E_0)\tensor H^2(C_{6,q})$,
$H^2(E_0)\tensor H^0(C_{6,q})$, and $B$ all map to $T$.  Thus we have
a well-defined and canonical surjection
$$\left(H^1(E_0)\tensor H^1(C_{6,q})\right)^{\mu_6}\to H^2(\EE)/T.$$

To finish, we compare dimensions.  We recalled in
Section~\ref{s:curves} above that $\mu_6$ acts
on $H^1(E_0)$ through the characters $\zeta\mapsto\zeta^{\pm1}$, each
with multiplicity one (see equation \eqref{eq:H1(E0)-decomp}).  Similarly, $\mu_6$ acts on $H^1(C_{6,q})$
through characters $\zeta\mapsto\zeta^i$ with $i\not\equiv0\pmod6$,
each with multiplicity $q-1$ (see equation \eqref{eq:H1(Cnq)-decomp}).  Thus 
$$\dim\left(H^1(E_0)\tensor H^1(C_{6,q})\right)^{\mu_6}=2(q-1).$$

On the other hand, the Grothendieck--Ogg--Shafarevich formula says that $H^2(\EE)/T$ has
dimension $\deg(\NN_E)-4$ where $\NN_E$ denotes the conductor of $E$.
We noted above that $\deg(\NN_E)=2(q+1)$,
so $H^2(\EE)/T$ has dimension $2(q-1)$.  Therefore the surjection
$$\left(H^1(E_0)\tensor H^1(C_{6,q})\right)^{\mu_6}\to H^2(\EE)/T$$
is in fact a bijection.
\end{proof}


\subsection{Artin--Schreier quotients}\label{ss:AS-DPC}
In this subsection, we show that $\EE$ is birational to a quotient of
a product of Artin--Schreier curves, in the style of
\cite{PriesUlmer16}.

Let 
$$\CC=C_{2,q}:\quad w_1^2=z_1^q-z_1
\qquad \text{ and } \qquad
\DD=C_{3,q}:\quad w_2^3=z_2^q-z_2.$$ 
Write $\infty_\CC$ and $\infty_\DD$ for the points at infinity on $\CC$
and $\DD$ respectively.
Let $\Fq$ act on $\CC\times\DD$ ``diagonally,'' i.e., via 
$\alpha(z_1,w_1,z_2,w_2)=(z_1+\alpha,w_1,z_2+\alpha,w_2).$
It is easily seen that the sole fixed point of this action is $(\infty_\CC,\infty_\DD)$.

Define a rational map $\CC\times\DD\ratto\P^1_t$ by
$(z_1,w_1,z_2,w_2)\mapsto t=z_1-z_2$,
and a rational map $\CC\times\DD\ratto\WW$ by
$$(z_1,w_1,z_2,w_2)\mapsto([x,y,z],t)=([w_2,w_1,1],z_1-z_2).$$
Both of these maps are morphisms away from $(\infty_\CC,\infty_\DD)$,
and they clearly factor through the quotient $(\CC\times\DD)/\Fq$.

\begin{propss}
  There is a proper birational morphism $\SS'\to\CC\times\DD$ resolving
  the indeterminacy of $\CC\times\DD\ratto\WW$ such that the
  components of the exceptional divisor of $\SS'\to\CC\times\DD$ map either
  to the fiber of $\WW$ over $t=\infty$ or to the zero-section of $\WW$.
\end{propss}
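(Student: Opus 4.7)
The plan is to combine Zariski's elimination of indeterminacy with a local analysis at $(\infty_\CC,\infty_\DD)$ to identify the images of the exceptional components. First, I verify that the formula $f\colon(z_1,w_1,z_2,w_2)\mapsto([w_2:w_1:1],z_1-z_2)$ defines a morphism on $\CC\times\DD\setminus\{(\infty_\CC,\infty_\DD)\}$. On the locus where $z_1,z_2$ are both finite this is clear. At a point of $\{\infty_\CC\}\times(\DD\setminus\{\infty_\DD\})$ (and symmetrically along the other boundary divisor), one passes to the second affine chart $([x':y':z'],t')=([x/t^{2d}:y/t^{3d}:z],1/t)$ of $\WW$; using $\ord_{\infty_\CC}(z_1)=-2$, $\ord_{\infty_\CC}(w_1)=-q$, and $6d-q\geq 1$, a direct estimate yields $x/t^{2d}\to 0$, $y/t^{3d}\to 0$, and $t'\to 0$, so $f$ extends and sends such a point to the cusp $[0:0:1]$ of $\WW_\infty$. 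Since $\CC\times\DD$ is a smooth projective surface and $\WW$ is proper, Zariski's theorem on elimination of indeterminacy then furnishes a composition of point blow-ups $\sigma\colon\SS'\to\CC\times\DD$ supported over $(\infty_\CC,\infty_\DD)$ such that $\tilde f:=f\circ\sigma$ is a morphism $\SS'\to\WW$.

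Every irreducible component $E$ of the exceptional divisor of $\sigma$ is a $\P^1$ inside $\sigma^{-1}((\infty_\CC,\infty_\DD))$, and its image $\tilde f(E)$ is contained in the set of all limits $\lim_{\tau\to 0}f(\gamma(\tau))$ along arcs $\gamma$ in $\CC\times\DD$ tending to $(\infty_\CC,\infty_\DD)$. To compute such limits, fix local uniformizers $s_1$ at $\infty_\CC$ and $s_2$ at $\infty_\DD$, so that $z_1=s_1^{-2}u_1$, $w_1=s_1^{-q}v_1$, $z_2=s_2^{-3}u_2$, $w_2=s_2^{-q}v_2$ for units $u_i,v_i$ near the origin. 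Writing $\gamma(\tau)=(a(\tau),b(\tau))$ with $a(0)=b(0)=0$, one has $t(\tau)=a^{-2}u_1(a)-b^{-3}u_2(b)$. Two cases occur: if $t(\tau)\to\infty$ then $\tilde f(\gamma(\tau))$ has limit in $\WW_\infty$; if $t(\tau)$ tends to a finite $t_0\in\A^1$, the leading polar terms must cancel, forcing $2\ord_\tau(a)=3\ord_\tau(b)$, hence $\ord_\tau(a/b)>0$ and $a/b\to 0$. It follows that $w_2/w_1=(a/b)^q(v_2/v_1)\to 0$ and $1/w_1=a^q/v_1\to 0$, so $[w_2:w_1:1]=[w_2/w_1:1:1/w_1]$ tends to $[0:1:0]$, which is the point on the zero-section of $\WW$ above $t=t_0$.

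Every possible limit thus lies in $\WW_\infty\cup(\text{zero-section})$, and so does $\tilde f(E)$. Since $\tilde f(E)$ is an irreducible subvariety of $\WW$, and $\WW_\infty$ and the zero-section are two distinct irreducible curves meeting in a single point, $\tilde f(E)$ is entirely contained in one of them, which proves the proposition. The technical heart of the argument is the calculation in the second case above: one has to track the orders of vanishing of $a$ and $b$ carefully in order to conclude $w_2/w_1\to 0$, and hence show that whenever $t$ stays bounded the map collapses onto the zero-section rather than landing in some extraneous section or multisection of $\WW\to\P^1_t$.
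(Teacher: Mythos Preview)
Your argument is correct, and it takes a genuinely different route from the paper's. The paper invokes the explicit recipe of \cite[Prop.~3.1.5]{PriesUlmer16}, which resolves the indeterminacy of $\CC\times\DD\ratto\P^1_t$ by four concrete point blow-ups, and then asserts (omitting the calculation) that the resulting map to $\WW$ is already a morphism, with the first three exceptional curves landing in $\WW_\infty$ and the last in the zero-section. Your approach is more conceptual: you appeal to Zariski's theorem for the mere existence of some $\SS'$, and then use a valuation-theoretic dichotomy on formal arcs through $(\infty_\CC,\infty_\DD)$ to show that every limit of $f$ there lies in $\WW_\infty\cup(\text{zero-section})$. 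The key observation---that if $t=z_1-z_2$ stays bounded then the pole orders force $2\ord_\tau(a)=3\ord_\tau(b)$, hence $\ord_\tau(a)>\ord_\tau(b)$ and $[w_2:w_1:1]\to[0:1:0]$---is exactly what makes the zero-section appear, and you identify it cleanly.

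What each approach buys: the paper's explicit construction gives a concrete $\SS'$ and even tells you which three components go to $\WW_\infty$ and which one to the zero-section, information that could in principle be used downstream. Your argument avoids any computation and is robust to the specific blow-up sequence, but does not distinguish the individual components. For the purposes of the proposition as stated (and for Corollary~\ref{cor:AS-cohom}, which is all the paper needs), either suffices.

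One small remark: since you are working over $\Frr$, ``arcs'' and ``limits'' should be read as formal arcs $\Spec\Fpbar[[\tau]]\to\CC\times\DD$ after base change to the algebraic closure; the claim that every point of an exceptional component is hit by such an arc with $a,b\not\equiv0$ holds because at any smooth point of $\SS'$ one can choose a tangent direction avoiding the (finitely many) components of $V(\sigma^*s_1)\cup V(\sigma^*s_2)\cup\mathrm{Exc}(\sigma)$ through that point.
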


\begin{proof}
  The proof of \cite[Prop~3.1.5]{PriesUlmer16} gives an explicit
  recipe for a morphism $\SS'\to\CC\times\DD$ resolving the
  indeterminacy of $\CC\times\DD\ratto\P^1_t$.  It is a sequence of four
  blow-ups of closed points.  Straightforward calculation, which we
  omit, shows that the induced map $\SS'\to\CC\times\DD\ratto\WW$ is in
  fact a morphism, and that it behaves as stated in the proposition on
  the components of the exceptional divisor.  Indeed, the first three
  blow-ups map to the fiber over $t=\infty$ and the last maps to the
  zero section.
\end{proof}

The diagonal action of $\Fq$ on $\CC\times\DD$ lifts uniquely to $\SS'$
and fixes the exceptional divisor pointwise.  It is clear that the
morphism $\SS'\to\WW$ factors through the quotient $\SS'/\Fq$, so we
have the following commutative diagram:
\begin{equation*}
\xymatrix{\SS'/\Fq\ar[r]\ar[d]&\WW\ar[d]\\
\P^1_t\ar@{=}[r]&\P^1_t.}
\end{equation*}

Now let $\tilde\SS\to\SS'/\Fq$ be a proper birational morphism so that
$\tilde \SS$ is a smooth projectve surface and the induced rational map
$\tilde\SS\ratto\EE$ is a morphism.  The diagram above then extends to
\begin{equation*}
\xymatrix{\tilde\SS\ar[r]\ar[d]&\EE\ar[d]\\
\SS'/\Fq\ar[r]\ar[d]&\WW\ar[d]\\
\P^1_t\ar@{=}[r]&\P^1_t.}
\end{equation*}

The following summarizes the relevant aspects of the geometry of this
picture.  

\begin{propss} \mbox{}
  \begin{enumerate}
  \item The strict transforms of $\infty_\CC\times\DD$ and
    $\CC\times\infty_\DD$ in $\tilde\SS$ map to the fiber of
    $\EE\to\P^1$ over $t=\infty$.
  \item The strict transforms in $\tilde\SS$ of the images in
    $\SS'/\Fq$ of the components of the exceptional fiber of
    $\SS'\to\CC\times\DD$ map to the fiber of $\EE\to\P^1$ over
    $t=\infty$ or to the zero-section of $\EE$.
\item Every component of the exceptional divisor of $\tilde\SS\to\SS'/\Fq$
  maps to a fiber of $\EE\to\P^1$.
  \end{enumerate}
\end{propss}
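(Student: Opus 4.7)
The plan is to track each geometric object through the commutative diagram displayed just before the statement, reducing each claim to a computation on $\CC\times\DD$, where the map to $\P^1_t$ is the explicit projection $(z_1,w_1,z_2,w_2)\mapsto z_1-z_2$. A key point is that this coordinate $t=z_1-z_2$ is preserved by the diagonal $\Fq$-action, so the fibration over $\P^1_t$ descends through $\SS'$ and $\SS'/\Fq$ and agrees with the one coming from $\WW\to\P^1_t$. Since the two compositions $\tilde\SS\to\EE\to\WW$ and $\tilde\SS\to\SS'/\Fq\to\WW$ coincide, for each component $Z\subset\tilde\SS$ one can first determine its image in $\WW$ and then use the known behaviour of $\EE\to\WW$ to locate its image in $\EE$.

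For part (1), a generic point of $\infty_\CC\times\DD$ has $z_1=\infty$ and $z_2$ finite, so $t=z_1-z_2=\infty$; the symmetric statement holds for $\CC\times\infty_\DD$. The images of these subvarieties in $\WW$ therefore lie in the fiber over $t=\infty$, and since $\EE\to\WW$ is proper, the preimage of that fiber equals the fiber of $\EE$ over $t=\infty$. Hence the strict transforms in $\tilde\SS$ land in the fiber of $\EE$ over $t=\infty$.

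For part (2), the previous proposition identifies the image in $\WW$ of each component of the exceptional divisor of $\SS'\to\CC\times\DD$ as lying in either the fiber over $t=\infty$ or in the zero-section. Since $\SS'\to\WW$ factors through $\SS'/\Fq$, the image in $\WW$ of the corresponding subvariety of $\SS'/\Fq$ lies in the same locus, and its strict transform in $\tilde\SS$ has the same image in $\WW$. The image in $\EE$ is then contained in the preimage of that locus under $\EE\to\WW$. The preimage of the fiber over $t=\infty$ is the fiber of $\EE$ over $t=\infty$. For the zero-section, $\EE\to\WW$ is an isomorphism over it: the zero-section is $[x,y,z]=[0,1,0]$, while the unique singularity of $\WW$, present only when $q\equiv 1\pmod 6$, is located at $([x',y',z'],t')=([0,0,1],0)$ on the other affine patch, hence off the zero-section.

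For part (3), any exceptional component of $\tilde\SS\to\SS'/\Fq$ is contracted to a single closed point of $\SS'/\Fq$, which has a well-defined image in $\P^1_t$ under the morphism $\SS'/\Fq\to\P^1_t$. Therefore the component is contained in a single fiber of $\tilde\SS\to\P^1_t$, and its image in $\EE$ lies in the corresponding fiber of $\EE\to\P^1$. The most delicate verification in the whole argument is the zero-section case in (2), where one must know that the singular point resolved by $\EE\to\WW$ avoids the zero-section; the remaining parts reduce to bookkeeping within the commutative diagram.
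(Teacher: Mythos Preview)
Your argument is correct and follows the same approach as the paper: each part is handled by chasing the commutative diagram and reducing to the explicit formula $t=z_1-z_2$, with part (2) resting on the previous proposition. You actually supply more detail than the paper does, in particular the verification that the singular point of $\WW$ (when $q\equiv1\pmod6$) lies off the zero-section, so that the preimage of the zero-section under $\EE\to\WW$ is again the zero-section; the paper leaves this implicit.
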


\begin{proof}
  The first point is obvious from the formula defining
  $\CC\times\DD\ratto\WW$.  The second point follows from the previous
  proposition.  The third point follows by examining the last displayed
  diagram.  Indeed, if $E$ is a component of the exceptional divisor
  of $\tilde\SS\to\SS/\Fq$, then $E$ lies over a single point of
  $\P^1_t$ and thus maps to a fiber of
  $\EE\to\P^1_t$.
\end{proof}

\begin{corss}\label{cor:AS-cohom} 
  Let $T\subset H^2(\EE)$ be the trivial lattice, i.e., the subspace
  spanned by the classes of the zero section and components of fibers
  of $\EE\to\P^1$.  There is a canonical isomorphism
$$H^2(\EE)/T\cong \left(H^1(\CC)\tensor H^1(\DD)\right)^{\Fq}.$$
Here the exponent $\Fq$ indicates the subspace invariant under the
diagonal action of $\Fq$.
\end{corss}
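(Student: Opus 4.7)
The plan is to mimic the argument used for Corollary~\ref{cor:sextic-cohom}, replacing the sextic twist picture with the Artin--Schreier diagram just set up. First I would use that the morphism $\tilde\SS\to\EE$ is surjective, hence induces a surjection $H^2(\tilde\SS)\onto H^2(\EE)$. Since $\tilde\SS\to\SS'/\F_q$ is obtained by blow-ups of smooth points and $\SS'\to\CC\times\DD$ is obtained by four blow-ups (by the construction of \cite{PriesUlmer16}), the blow-up formula and the fact that the diagonal $\F_q$-action fixes every exceptional divisor pointwise give a canonical decomposition
\[
H^2(\tilde\SS)\cong H^2(\CC\times\DD)^{\F_q}\oplus B,
\]
where $B$ is the subspace spanned by the classes of components of the exceptional divisors of the two birational morphisms. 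Applying K\"unneth then yields
\[
H^2(\tilde\SS)\cong\bigl(H^1(\CC)\tensor H^1(\DD)\bigr)^{\F_q}\oplus\bigl(H^0(\CC)\tensor H^2(\DD)\bigr)\oplus\bigl(H^2(\CC)\tensor H^0(\DD)\bigr)\oplus B,
\]
the inner two factors being automatically $\F_q$-invariant.

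Next, the proposition just proved shows that each of the three summands other than $(H^1(\CC)\tensor H^1(\DD))^{\F_q}$ maps into $T$: the two ``$H^0\otimes H^2$'' and ``$H^2\otimes H^0$'' pieces are generated by the classes of the strict transforms of $\infty_\CC\times\DD$ and $\CC\times\infty_\DD$, which land in the fiber above $t=\infty$ by part (1); and the classes in $B$ all map either to the fiber above $t=\infty$, to the zero-section, or to other fibers of $\EE\to\P^1$, by parts (2) and (3). Therefore the induced map descends to a well-defined surjection
\[
\bigl(H^1(\CC)\tensor H^1(\DD)\bigr)^{\F_q}\onto H^2(\EE)/T.
\]

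To conclude, I would check that both sides have the same dimension. By the analysis in Section~\ref{ss:Cnq}, under the $\F_q$-action, $H^1(\CC)$ (resp.\ $H^1(\DD)$) decomposes into eigenspaces indexed by the non-trivial additive characters of $\F_q$, each occurring with multiplicity $n-1=1$ (resp.\ $n-1=2$). Under the diagonal action, an eigenvector of $H^1(\CC)\tensor H^1(\DD)$ corresponding to characters $(\beta,\gamma)$ is $\F_q$-invariant precisely when $\gamma=\beta^{-1}$, and requiring both characters to be non-trivial leaves $q-1$ choices for $\beta$, each contributing $1\cdot 2=2$ dimensions. Hence $\dim(H^1(\CC)\tensor H^1(\DD))^{\F_q}=2(q-1)$, which matches the dimension of $H^2(\EE)/T=\deg(\NN_E)-4=2(q-1)$ computed via Grothendieck--Ogg--Shafarevich in Section~\ref{ss:reduction}. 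The surjection is therefore an isomorphism.

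The main obstacle I anticipate is verifying cleanly that the exceptional divisor of $\SS'\to\CC\times\DD$, which lives above the single point $(\infty_\CC,\infty_\DD)$, maps into $T$ after passing through the quotient by $\F_q$ and the desingularization $\tilde\SS\to\SS'/\F_q$; this is precisely the content of parts (2) and (3) of the preceding proposition, so no additional blow-up analysis beyond \cite{PriesUlmer16} should be required. The rest is formal K\"unneth bookkeeping and the numerical coincidence coming from the conductor computation.
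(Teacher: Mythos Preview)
Your proposal is correct and follows essentially the same approach as the paper, which explicitly says the proof is ``completely parallel'' to that of Corollary~\ref{cor:sextic-cohom} and only sketches the argument. You have simply written out the details of that sketch---blow-up formula, K\"unneth, the preceding proposition to push the extraneous summands into $T$, then the dimension count via Section~\ref{ss:Cnq} and Grothendieck--Ogg--Shafarevich---so there is nothing to correct.
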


\begin{proof}
The proof is completely parallel to that of \ref{cor:sextic-cohom}, so we just sketch the argument.   
The dominant morphism $\tilde\SS\to\EE$ induces a surjection
  $H^2(\tilde\SS)\to H^2(\EE)$.  Using the K\"unneth formula, taking
  invariants, using the blow-up formula, and applying the proposition, we
 obtain a canonical 
 surjection
$$\left(H^1(\CC)\tensor H^1(\DD)\right)^{\Fq}\to H^2(\EE)/T.$$
To finish, we use Section~\ref{s:curves} and the proof of Corollary~\ref{cor:sextic-cohom} to check that 
$\left(H^1(\CC)\tensor H^1(\DD)\right)^{\Fq}$ and $H^2(\EE)/T$ both have
dimension $2(q-1)$.  Thus the displayed surjection is a bijection.
\end{proof}

\subsection{Fermat quotients}
The surfaces $\WW$ and $\EE$ have affine open subsets defined by an
equation with four monomials in three variables, namely
$$y^2=x^3+t^q-t.$$
In Shioda's terminology, these are ``Delsarte surfaces.''  This allows
one to show that (over a sufficiently large ground field) $\EE$ is
birational to a quotient of a Fermat surface by a finite group.  The
Fermat surface is itself birational to the quotient of a product of
two Fermat curves by a finite group.  Thus we arrive at a birational
presentation of $\EE$ as a quotient of a product of Fermat curves.  It
turns out that this presentation factors through the sextic twist
presentation given in Section~\ref{ss:sextic-DPC}, in a sense to be
explained below.  Thus, the Fermat quotient presentation does not give
essential new information, and we will only sketch the main points,
omitting most details.

Let $d=6q-6$.  Applying the method of Shioda (see \cite{Shioda86} and
\cite[\S6]{Ulmer07b} or \cite[Lecture 2, \S10]{Ulmer11}) yields a
dominant rational map from $F_d^2$ to $\EE$.  Explicitly, take two
copies of $F_d$ with homogeneous coordinates $[X_0,X_1,X_2]$ and
$[Y_0,Y_1,Y_2]$, and assume that $\Frr$ is large enough to contain a
primitive $2d$-th root of unity $\epsilon$.  Consider the rational map
$\phi:F_d^2\ratto\EE$ given by
$$\left([X_0,X_1,X_2],[Y_0,Y_1,Y_2]\right)\mapsto
(x,y,t)=\left(
\epsilon^2\frac{X_1^{2q-2}}{X_2^{2q-2}}\frac{Y_0^{2q-2}Y_1^2}{Y_2^{2q}},
\epsilon^{3q}\frac{X_0^{3q-3}}{X_2^{3q-3}}\frac{Y_0^{3q-3}Y_1^3}{Y_2^{3q}},
\epsilon^6\frac{Y_1^6}{Y_2^6}
\right).$$
Then it is not hard to check that $\phi$ is dominant of generic degree
$d^3$ and that it induces a birational isomorphism $F_d^2/G\ratto \EE$
where $G\subset\left(\mu_d^3/\mu_d\right)^2$ is the group generated by 
$$([1,1,\zeta],[\zeta,1,1]),\quad([\zeta^2,\zeta^3,1],[1,1,1]),\quad\text{and}
\quad([\zeta,\zeta^2,1],[1,\zeta^{q-1},1])$$
where $\zeta=\epsilon^2$ is a primitive $d$-th root of unity in
$\Frr$.  

Analyzing the geometry of $\phi$ would allow us to show that
$H^2(\EE)/T$ is isomorphic to a certain subspace of $H^2(F_d^2)$.
We omit the details, because, as we explain next, $\phi$ factors
through the rational map $E_0\times C_{6,q}\ratto\WW$ given in
Subsection~\ref{ss:sextic-DPC}. 

Indeed, consider the morphism $\tau_1:F_d\to E_0$ given by
$$[X_0,X_1,X_2]\mapsto(z,w)=
\left(\left(\frac{X_1}{X_2}\right)^{2q-2},
\left(\frac{\epsilon X_0}{X_2}\right)^{3q-3}\right)$$
and the morphism $\tau_2:F_d\to C_{6,q}$ given by
$$[Y_0,Y_1,Y_2]\mapsto(t,u)=
\left(\left(\frac{\epsilon Y_1}{Y_2}\right)^{6},
\frac{\epsilon Y_0^{q-1}Y_1}{Y_2^q}
\right).$$
Then it is straightforward to check that the diagram
\begin{equation*}
\xymatrix{F_d^2\ar^\phi@{-->}[rr]\ar_{\tau_1\times\tau_2}[rd]&&\EE\\
&E_0\times C_{6,q}\ar@{-->}[ur]}
\end{equation*}
commutes, where the right diagonal rational map is that given in
Subsection~\ref{ss:sextic-DPC}.  This implies that $H^2(\EE)/T$
already appears in the cohomology of $E_0\times C_{6,q}$, and
moreover, the relevant map is defined without requiring an extension
of $\Frr$.  We will thus omit any further consideration of Fermat curves.

\section{Geometric calculation of the $L$-function}\label{s:L-cohom}
In this section, we use the presentation of $\EE$ as a quotient of a
product of curves to give another calculation of $L(E,s)$ via the
cohomological formula for it proved in \cite{Shioda92}.  As in the
previous section, let $T\subset H^2(\EE)$ be the subspace spanned by
the classes of the zero-section and all components of all fibers of
$\EE\to\P^1$.  Shioda proved that
$$L(E,s)=\det\left(1-\Fr_rr^{-s}\left|H^2(\EE)/T\right.\right).$$

\subsection{Via sextic twists}
Recall from Section~\ref{ss:orbits} that $\<r\>$ acts on
$S^\times=(\Z/6\Z)^\times\times\Fqtimes$, the set of orbits being denoted $O_{r,6,q}^\times$.  As
in Section~\ref{ss:J(o)}, let $N_{r,6}$ denote the set of orbits of
$\<r\>$ on $(\Z/6\Z)^\times$, and let $\rho_6:O_{r,6,q}^\times\to N_{r,6}$ be
the map induced by the projection
$(\Z/6\Z)^\times\times\Fqtimes\to(\Z/6\Z)^\times$.
Define
$$n_6(o)=\frac{|o|}{|\rho_6(o)|}.$$
Note that $n_6(o)$ is either $|o|$ (if $r\equiv1\pmod6$) or $|o|/2$
(if $r\equiv-1\pmod6$).  To each orbit $o\in O_{r,6,q}^\times$ we
attach the Jacobi sum $J(\rho_6(o))$  (see Equation~\eqref{eq:def-J(o)}) and the Gauss sum $G(o)$ (see Equation~\eqref{eq:G(o)-def}).

\begin{thm}\label{thm:ST-L}
  $$L(E,s)=
\prod_{o\in O_{r,6,q}^\times}\left(1-J(\rho_6(o))^{n_6(o)}G(o)r^{-s|o|}\right).$$
\end{thm}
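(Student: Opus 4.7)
The starting point is Shioda's cohomological formula
$$L(E,s)=\det\left(1-\Fr_r r^{-s}\left|H^2(\EE)/T\right.\right),$$
combined with the canonical isomorphism
$$H^2(\EE)/T\cong \left(H^1(E_0)\tensor H^1(C_{6,q})\right)^{\mu_6}$$
from Corollary~\ref{cor:sextic-cohom}. The plan is to decompose this $\mu_6$-invariant subspace as a direct sum of $\Fr_r$-stable pieces indexed by orbits $o\in O_{r,6,q}^\times$, each of dimension $|o|$, and then to apply Lemma~\ref{lemma:charpoly} to each piece.

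First I would identify the invariants explicitly. Using the decomposition $H^1(E_0)=H^1(E_0)^{(1)}\oplus H^1(E_0)^{(-1)}$ from Section~\ref{ss:E0} and the decomposition $H^1(C_{6,q})=\bigoplus_{(i,\alpha)\in S_{6,q}}H^1(C_{6,q})^{(i,\alpha)}$ from Section~\ref{ss:Cnq}, and tracking the anti-diagonal action $\zeta(z,w,t,u)=(\zeta^2 z,\zeta^3 w,t,\zeta^{-1}u)$, a generator $\zeta\in\mu_6$ acts on $H^1(E_0)^{(j)}\tensor H^1(C_{6,q})^{(i,\alpha)}$ by $\zeta^{j-i}$. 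Invariance thus forces $i\equiv j\pmod 6$; since $j\in\{\pm1\}$, this selects precisely the lines indexed by $(i,\alpha)\in S_{6,q}^\times=(\Z/6\Z)^\times\times\Fqtimes$:
$$\left(H^1(E_0)\tensor H^1(C_{6,q})\right)^{\mu_6}
=\bigoplus_{(i,\alpha)\in S_{6,q}^\times}H^1(E_0)^{(i)}\tensor H^1(C_{6,q})^{(i,\alpha)}.$$

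Next I would analyze the action of $\Fr_r$. By Sections~\ref{ss:E0} and~\ref{ss:Cnq}, Frobenius sends $H^1(E_0)^{(i)}\tensor H^1(C_{6,q})^{(i,\alpha)}$ to $H^1(E_0)^{(ri)}\tensor H^1(C_{6,q})^{(ri,\alpha^{1/r})}$, which is exactly the action of $\<r\>$ on $S_{6,q}^\times$ defining $O_{r,6,q}^\times$. Therefore the decomposition refines to a direct sum over orbits $o\in O_{r,6,q}^\times$ of $\Fr_r$-stable subspaces $V_o$, each of dimension $|o|$. By Lemma~\ref{lemma:charpoly},
$$\det(1-\Fr_r T\mid V_o)=1-\lambda_o T^{|o|},$$
where $\lambda_o$ is the eigenvalue of $\Fr_r^{|o|}$ on any one of the summand lines $H^1(E_0)^{(i)}\tensor H^1(C_{6,q})^{(i,\alpha)}$ with $(i,\alpha)\in o$.

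The final step is to compute $\lambda_o$ as a product of the eigenvalues on the two tensor factors. On $H^1(C_{6,q})^{(i,\alpha)}$, Section~\ref{ss:Cnq} identifies the eigenvalue of $\Fr_r^{|o|}$ as $G(o)$. For the elliptic factor, $\rho_6(o)\in N_{r,6}$ is the $\<r\>$-orbit of $i$ in $(\Z/6\Z)^\times$, and $|\rho_6(o)|$ divides $|o|$ with quotient $n_6(o)$; Section~\ref{ss:E0} gives that $\Fr_r^{|\rho_6(o)|}$ acts on $H^1(E_0)^{(i)}$ by $J(\rho_6(o))$, so $\Fr_r^{|o|}$ acts by $J(\rho_6(o))^{n_6(o)}$. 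Multiplying the two eigenvalues gives $\lambda_o=J(\rho_6(o))^{n_6(o)}G(o)$, and taking the product over all orbits $o$ yields the claimed formula.

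The main obstacle I anticipate is bookkeeping rather than depth: one must carefully verify that the Frobenius action on the two factors genuinely assembles into the single $\<r\>$-action on $S_{6,q}^\times$ (in particular, that the orbit-size $|o|$ is always a common multiple of $|\rho_6(o)|$ and of $[\Frr(\alpha):\Frr]$, so the two Frobenius calculations can be matched up at the $|o|$-th iterate), and that the ``anti-diagonal'' sign convention in the $\mu_6$ action is treated consistently across both Corollary~\ref{cor:sextic-cohom} and the character decompositions of Sections~\ref{ss:E0}--\ref{ss:Cnq}.
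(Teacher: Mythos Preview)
Your proposal is correct and follows essentially the same approach as the paper: Shioda's formula plus Corollary~\ref{cor:sextic-cohom}, the decomposition of the $\mu_6$-invariants into lines indexed by $S_{6,q}^\times$, the grouping of these lines into $\Fr_r$-stable blocks indexed by $O_{r,6,q}^\times$, and Lemma~\ref{lemma:charpoly} to read off the factor $1-J(\rho_6(o))^{n_6(o)}G(o)T^{|o|}$ for each orbit. The only differences are expository: you spell out the anti-diagonal $\mu_6$ action on the tensor summands and the bookkeeping for $|o|$ versus $|\rho_6(o)|$ a bit more explicitly than the paper does.
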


\begin{proof}
By Proposition~\ref{cor:sextic-cohom}, we know that
$$H^2(\EE)/T\cong\left(H^1(E_0)\tensor H^1(C_{6,q})\right)^{\mu_6}$$
where $\mu_6$ acts anti-diagonally.  Combining Equations~\eqref{eq:H1(E0)-decomp} and \eqref{eq:H1(Cnq)-decomp}, the right hand side decomposes as the direct sum
$$\bigoplus_{(i,\alpha)\in S^\times}
H^1(E_0)^{(i)}\tensor H^1(C_{6,q})^{(i,\alpha)}$$
where the summands are one-dimensional.  If $o\in O_{r,6,q}^\times$, then the
subspace 
$$\bigoplus_{(i,\alpha)\in o}
H^1(E_0)^{(i)}\tensor H^1(C_{6,q})^{(i,\alpha)}$$ is preserved by
the $r$-power Frobenius $\Fr_r$, and by what was recalled in Sections~\ref{ss:E0} and
\ref{ss:Cnq}, the eigenvalue of $\Fr_r^{|o|}$ on
$H^1(E_0)^{(i)}\tensor H^1(C_{6,q})^{(i,\alpha)}$ is
$J(\rho_6(o))^{n_6(o)}G(o)$.  By Lemma~\ref{lemma:charpoly}, the characteristic
polynomial of $\Fr_rr^{-s|o|}$ on the displayed subspace is
$\left(1-J(\rho_6(o))^{n_6(o)}G(o)r^{-s|o|}\right)$.
Taking the product over all orbits yields the theorem.
\end{proof}

\subsection{Via Artin--Schreier quotients}
As in Section~\ref{ss:orbits}, let $\<r\>$ act on
$S^\times=(\Z/n\Z)^\times\times\Fqtimes$ with orbits
$O_{r,n,q}^\times$.  For $n=2,3$, the natural projection $(\Z/6\Z)^\times\to(\Z/n\Z)^\times$ induces a map
$\pi_n:O^\times_{r,6,q}\to O^\times_{r,n,q}$.  Recall that we write 
$$m_2(o)=\frac{|o|}{|\pi_2(o)|}.$$
(There is no need for an analogous $m_3(o)$ since $|\pi_3(o)|=|o|$ for
all $o\in O_{r,6,q}^\times$.)
To each orbit $o\in O_{r,6,q}^\times$ we associate
Gauss sums $G(\pi_2(o))$ and $G(\pi_3(o))$ (see Section~\ref{ss:G(o)}).

\begin{thm}\label{thm:AS-L}
  $$L(E,s)=
\prod_{o\in O_{r,6,q}^\times}\left(1-G(\pi_2(o))^{m_2(o)}G(\pi_3(o))r^{-s|o|}\right).$$
\end{thm}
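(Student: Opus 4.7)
The plan is to mirror the proof of Theorem~\ref{thm:ST-L}, replacing Corollary~\ref{cor:sextic-cohom} by Corollary~\ref{cor:AS-cohom}. By Shioda's cohomological formula, $L(E,s) = \det(1 - \Fr_r r^{-s} \mid H^2(\EE)/T)$, and Corollary~\ref{cor:AS-cohom} identifies $H^2(\EE)/T$ with $(H^1(\CC)\otimes H^1(\DD))^{\F_q}$, with $\F_q$ acting diagonally. Thus the task is to describe this invariant subspace and the action of $\Fr_r$ on it explicitly, using the material on Artin--Schreier curves recalled in Section~\ref{ss:Cnq}.

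First I would use the decomposition \eqref{eq:H1(Cnq)-decomp} for $n=2$ and $n=3$ to write
$$H^1(\CC)\otimes H^1(\DD) = \bigoplus_{(i_1,\alpha_1)\in S_{2,q},\ (i_2,\alpha_2)\in S_{3,q}} H^1(\CC)^{(i_1,\alpha_1)}\otimes H^1(\DD)^{(i_2,\alpha_2)}.$$
On each summand, the diagonal $\F_q$-action is via the additive character $\psi_{\alpha_1+\alpha_2}$, so the $\F_q$-invariants are exactly the summands with $\alpha_2=-\alpha_1$. Combined with the Chinese Remainder Theorem isomorphism $(\Z/6\Z)^\times \simeq (\Z/2\Z)^\times\times(\Z/3\Z)^\times$, this sets up a natural bijection between the invariant lines and $S^\times_{6,q}$: to $(i,\alpha)\in S^\times_{6,q}$ one associates the line $H^1(\CC)^{(i_1,\alpha)}\otimes H^1(\DD)^{(i_2,-\alpha)}$ where $(i_1,i_2)$ is the CRT-image of $i$. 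Since $\Fr_r$ sends $H^1(C_{n,q})^{(j,\beta)}$ to $H^1(C_{n,q})^{(rj,\beta^{1/r})}$ and $(-\alpha)^{1/r}=-\alpha^{1/r}$, this identification is equivariant for the $\<r\>$-action, so Frobenius orbits in the invariant subspace match the set $O^\times_{r,6,q}$.

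It then remains to compute, for an orbit $o\in O^\times_{r,6,q}$ with representative $(i,\alpha)$, the eigenvalue of $\Fr_r^{|o|}$ on the corresponding invariant line, which factors as a product of eigenvalues on the two tensor factors. On $H^1(\CC)^{(i_1,\alpha)}$, the $\<r\>$-orbit in $S_{2,q}$ through $(i_1,\alpha)$ is exactly $\pi_2(o)$, of size $|\pi_2(o)|=|o|/m_2(o)$; hence $\Fr_r^{|o|} = (\Fr_r^{|\pi_2(o)|})^{m_2(o)}$ acts by $G(\pi_2(o))^{m_2(o)}$. On $H^1(\DD)^{(i_2,-\alpha)}$, the $\<r\>$-orbit has size $|\pi_3(o)|=|o|$ (because $\pi_3$ is a bijection preserving orbit sizes), and its Gauss sum equals $G(\pi_3(o))$. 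Applying Lemma~\ref{lemma:charpoly} to each such orbit and multiplying over all $o\in O^\times_{r,6,q}$ yields the claimed formula.

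The main subtle point is the sign $\alpha_2=-\alpha_1$, which one might fear causes the Gauss sum on the $\DD$-factor to be $G_\F(\chi_{\F,3}^{i_2},\psi_{-\alpha})$ rather than $G_\F(\chi_{\F,3}^{i_2},\psi_\alpha)$; however, these two sums are equal since $\chi_{\F,3}(-1)=1$ (as $-1$ must lie in $\mu_2\cap\mu_3=\{1\}$ under a cubic character), so the formula matches the definition of $G(\pi_3(o))$ without any cosmetic adjustment. As an independent sanity check, the resulting expression coincides termwise with that of Theorem~\ref{thm:L-elem}, providing the cross-verification mentioned in the introduction to Section~\ref{s:L-cohom}.
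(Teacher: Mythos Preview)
Your proposal is correct and follows essentially the same route as the paper's own proof: both use Shioda's formula together with Corollary~\ref{cor:AS-cohom}, decompose $(H^1(\CC)\otimes H^1(\DD))^{\Fq}$ into lines indexed by $S^\times_{6,q}$ via the condition $\alpha_2=-\alpha_1$ and the CRT isomorphism, compute the eigenvalue of $\Fr_r^{|o|}$ on each line as $G(\pi_2(o))^{m_2(o)}G(\pi_3(o))$, handle the sign via $\chi_{\F,3}(-1)=1$ (the paper phrases this as ``$-1$ is a cube''), and finish with Lemma~\ref{lemma:charpoly}.
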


\begin{proof}
  By Corollary~\ref{cor:AS-cohom}, we have
$$H^2(\EE)/T\cong\left(H^1(C_{2,q})\tensor H^1(C_{3,q})\right)^{\Fq}$$
where $\Fq$ acts diagonally.  Using Equation~\eqref{eq:H1(Cnq)-decomp} twice, we get a direct sum decomposition of the right hand side: 
$$\bigoplus_{(i,\alpha)\in S^\times}
H^1(C_{2,q})^{(i\,\textrm{mod}\,2,\alpha)}\tensor H^1(C_{3,q})^{(i\,\textrm{mod}\,3,-\alpha)},$$
where all the summands are one-dimensional.  For any orbit $o\in O_{r,6,q}^\times$,  the
subspace 
\begin{equation*}
    \bigoplus_{(i,\alpha)\in o}
H^1(C_{2,q})^{(i\,\textrm{mod}\,2,\alpha)}\tensor H^1(C_{3,q})^{(i\,\textrm{mod}\,3,-\alpha)}
\end{equation*}
is preserved by the $r$-power Frobenius.
The results recalled in Section~\ref{ss:Cnq} show that the eigenvalue of
$\Fr_r^{|o|}$ acting on the line 
$H^1(C_{2,q})^{(i\,\textrm{mod}\,2,\alpha)}\tensor H^1(C_{3,q})^{(i\,\textrm{mod}\,3,-\alpha)}$ is $G(\pi_2(o))^{m_2(o)}G(\pi_3(o))$.
(Here we use that
$G_{\F}(\chi_{\F,3}^i,\psi_{-\alpha})=G_{\F}(\chi_{\F,3}^i,\psi_{\alpha})$, a consequence of the fact that
$-1$ is a cube in any finite field $\F$.)   
Lemma~\ref{lemma:charpoly} now implies that the characteristic polynomial
of $\Fr_rr^{-s|o|}$ on the displayed subspace is 
$\left(1-G(\pi_2(o))^{m_2(o)}G(\pi_3(o))r^{-s|o|}\right)$.
Taking the product over orbits then yields the theorem.
\end{proof}

\subsection{Comparison of $L$-functions}\label{ss:L-compare}
As a  check, we verify that the three expressions for $L(E,s)$
are in fact equal.

The ``Artin--Schreier'' expression for the $L$-function in
Theorem~\ref{thm:AS-L}  is visibly equal to the ``elementary''
expression in Theorem~\ref{thm:L-elem}.

The index sets for the products in the ``Artin--Schreier''  and ``sextic twist''
expressions for the $L$-function (Theorems~\ref{thm:AS-L} and
 \ref{thm:ST-L} respectively) are
the same, namely $O_{r,6,q}^\times$.  If $o\in O_{r,6,q}^\times$ is the orbit through
$(i,\alpha)$, let $o'$ be the orbit through $(-i,\alpha)$.  The map
$o\mapsto o'$ gives a bijection $O_{r,6,q}^\times\to O_{r,6,q}^\times$ with
$n_6(o)=n_6(o')$.  

Let $o\in O_{r,6,q}^\times$ and choose $(i,\alpha)\in o$.
Write $\F=\F_{r^{|o|}}$,
$\F'=\F_{r^{|\pi_2(o)|}}$, and $\F''=\F_{r^{|\rho_6(o)|}}$, so that
$\F/\F'$ is an extension of degree $m_2(o)$, and $\F/\F''$ is an
extension of degree $n_6(o)$.  Then
\begin{align*}
G(\pi_2(o))^{m_2(o)}G(\pi_3(o))
&=G_{\F'}(\chi_{\F',2}^i,\psi_\alpha)^{m_2(o)}G_\F(\chi_{\F,3}^i,\psi_\alpha)
&\text{(definition of $G(\pi_n(o)$)}\\
&=G_\F(\chi_{\F,2}^i,\psi_\alpha)G_\F(\chi_{\F,3}^i,\psi_\alpha)
&\text{(Hasse--Davenport relation)}\\
&=J_\F(\chi_{\F,2}^i,\chi_{\F,3}^i)G_\F(\chi_{\F,2}^i\chi_{\F,3}^i,\psi_\alpha)
&\text{(Equation~\eqref{eq:GaussJacobi})}\\
&=J_{\F''}(\chi_{\F'',2}^i,\chi_{\F'',3}^i)^{n_6(o)}G_\F(\chi_{\F,2}^i\chi_{\F,3}^i,\psi_\alpha)
&\text{(Hasse--Davenport relation)}\\
&=J(\rho_6(o'))^{n_6(o')}G_\F(\chi_{\F,2}^i\chi_{\F,3}^i,\psi_\alpha)\
&\text{(definition of $J(\rho_6(o'))$}\\
&&\text{and $n_6(o)=n_6(o')$)}\\
&=J(\rho_6(o'))^{n_6(o')}G_\F(\chi_{\F,6}^{-i},\psi_\alpha)
&\text{($2+3=-1\pmod6$)}\\
&=J(\rho_6(o'))^{n_6(o')}G(o')
&\text{(definition of $G(o')$).}
\end{align*}
Thus the $o$ factor in the ``Artin--Schreier'' product for $L(E,s)$ equals the $o'$
factor in the ``sextic twist'' product for $L(E,s)$.

\section{First application of the BSD conjecture}\label{s:BSD}
In this section, we show that the conjecture of Birch and
Swinnerton-Dyer (BSD) holds for $E$, and we deduce consequences for the
Mordell--Weil group $E(K)$.

\subsection{Notation and definitions}\label{ss:BSD-notations}
We recall the remaining definitions needed to state our BSD result.
There is a canonical $\Z$-bilinear pairing
$$\<\cdot,\cdot\>:E(K)\times E(K)\to\Q$$
which is non-degenerate modulo torsion.  (This is the canonical
N\'eron--Tate height pairing divided by $\log r$.  See \cite{Neron65} for the
definition and \cite[B.5]{HindrySilvermanDG} for a friendly
introduction.)  
Choosing a $\Z$-basis $P_1,\dots,P_R$ for $E(K)$ modulo
torsion, we define the \emph{regulator} of $E$ as
$$\Reg(E):=\left|\det\< P_i,P_j\>_{1\le i,j\le R}\right|.$$
The regulator is a positive rational number, well defined independently
of the choice of bases, and by convention, it is $1$ when the rank of
$E(K)$ is zero.

We write $H^1(K,E)$ for the \'etale cohomology of $K$ with
coefficients in $E$ and similarly for $H^1(K_v,E)$ for any place $v$ of $K$.  The
\emph{Tate--Shafarevich group} of $E$  is defined as
$$\sha(E):=\ker\left(H^1(K,E)\to\prod_v H^1(K_v,E)\right)$$
where the product is over the places of $K$ and the map is the product
of the restriction maps.  

The leading coefficient of the $L$-function (also called its \emph{special value} at $s=1$ or $T=r^{-1}$) is defined by 
\begin{equation*}
L^*(E):=\frac1{\rho!}
\left.\left(\frac{d}{dT}\right)^\rho L(E,T)\right|_{T=r^{-1}}
=\frac{1}{(\log r)^\rho}\frac1{\rho!}
\left.\left(\frac{d}{ds}\right)^\rho L(E,s)\right|_{s=1}
\end{equation*}
where $\rho$ is the order of vanishing $\rho:=\ord_{s=1}L(E,s)$.  
The point of the normalization by $(\log r)^{-\rho}$ is to ensure that $L^*(E)$ is a rational number (recall indeed that $L(E,s)$ is a polynomial with integral coefficients in $T=r^{-s}$).
Note that the above definition directly implies the two relations:
$$ L^\ast(E) = \left. \frac{L(E, T)}{(1-rT)^\rho}\right|_{T=r^{-1}} 
\qquad \text{ and } \qquad 
L^\ast(E) = \lim_{s\to 1} \frac{L(E, s)}{(1-r^{1-s})^\rho}.$$
We refer to Section~\ref{ss:definitions} for the definition of the local Tamagawa numbers $c_v$.

Here is our main result connecting all these invariants.

\begin{thm}\label{thm:BSD}
The BSD conjecture holds for $E$.  More precisely,
\begin{enumerate}
\item $\ord_{s=1}L(E,s)=\rk E(K)$,
\item $\sha(E)$ is finite,
\item we have an equality
$$L^*(E)=\frac{\Reg(E)|\sha(E)|\prod_vc_v}{r^{\deg(\omega_E)-1}|E(K)_{\mathrm{tors}}|^2}.$$
\end{enumerate}
\end{thm}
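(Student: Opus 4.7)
The plan is to deduce all three parts of the theorem at once via the standard Tate--Milne mechanism, using the geometric presentation of $\EE$ from Section~\ref{s:domination}. Recall Milne's theorem \cite{Milne75}: for a non-isotrivial elliptic surface $\EE\to\P^1$ over $\Frr$ with generic fibre $E$, the validity of the Tate conjecture $T^1$ for $\EE$ is equivalent to the BSD conjecture for $E$, and moreover implies the precise formula claimed in part~(3). It therefore suffices to establish $T^1$ for our $\EE$, i.e.\ that the cycle class map $\NS(\EE)\tensor\Qlbar\to H^2(\EE)^{\Fr_r=r}$ is surjective.

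To do this, I would use the fact, proved in Section~\ref{s:domination} (cf.\ Corollary~\ref{cor:AS-cohom} or Corollary~\ref{cor:sextic-cohom}), that $\EE$ is dominated by a smooth projective surface which is birational to a quotient of a product of two smooth projective curves (either $\CC\times\DD$ or $E_0\times C_{6,q}$) by a finite group action. By Tate's theorem \cite{Tate66b} on endomorphisms of abelian varieties over finite fields, $T^1$ holds for any product of two smooth projective curves over $\Frr$. The conjecture is then inherited in three steps: (a) it passes to the finite quotient, since one can identify the relevant piece of cohomology with the invariants of the group action (this is exactly the content of the K\"unneth/invariants computations giving Corollaries~\ref{cor:sextic-cohom} and~\ref{cor:AS-cohom}); (b) it is preserved by any resolution of singularities, since blowing up a closed point only introduces the algebraic class of the exceptional divisor; and (c) it is inherited by $\EE$ via the dominant morphism $\tilde\SS\to\EE$, which is surjective on $H^2$ and compatible with cycle classes, and whose kernel on $H^2(\EE)/T$ is in turn trivial by the dimension count already made in the proofs of those corollaries. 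Hence $T^1$ holds for $\EE$.

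Applying Milne's theorem then gives the rank equality~(1), the finiteness of $\sha(E)$ claimed in~(2), and the BSD formula~(3). The main delicate point is really a bookkeeping check of normalizations: one must confirm that the height pairing as defined in Section~\ref{ss:BSD-notations} (the N\'eron--Tate pairing divided by $\log r$) matches Milne's convention, and that the factor $r^{\deg(\omega_E)-1}$ in the denominator comes out correctly from the Euler characteristic term $\chi(\EE,\O_\EE)=\deg(\omega_E)$ appearing in the Artin--Tate formula, together with the triviality of all local Tamagawa factors and of the torsion subgroup (Proposition~\ref{prop:Tawagawa-torsion}). Given the values of $\deg\NN_E$ and $\deg(\omega_E)$ recorded in Section~\ref{ss:reduction}, these checks are routine.
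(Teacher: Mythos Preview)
Your approach is essentially identical to the paper's own (very brief) proof: invoke the domination of $\EE$ by a product of curves from Section~\ref{s:domination}, apply Tate's theorem to get $T^1$ for that product, and then use Milne's equivalence to conclude BSD for $E$; the paper simply cites \cite{Tate66b}, \cite{Milne75}, and \cite[Thm.~9.1]{Ulmer11} for the details you spell out. One small slip: you describe Milne's input as applying to a ``non-isotrivial'' elliptic surface, but $E$ here \emph{is} isotrivial (it becomes constant over $L=K(u)$ with $u^6=t^q-t$); the correct hypothesis is that $E$ is non-constant, which it is, and the Artin--Tate/BSD equivalence holds in that generality. Also, the formula in part~(3) already retains $\prod_v c_v$ and $|E(K)_{\mathrm{tors}}|$, so Proposition~\ref{prop:Tawagawa-torsion} is not needed at this stage---it only enters later when simplifying to Equation~\eqref{eq:BSD-simplified}.
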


\begin{proof}
  This follows from the fact (Sections~\ref{ss:sextic-DPC} and
  \ref{ss:AS-DPC}) that the N\'eron model of $E$ is dominated by a
  product of curves and earlier work of Tate \cite{Tate66b} and Milne \cite{Milne75}.  See
  \cite[Thm.~9.1]{Ulmer11} for more details.
\end{proof}

As we have showed, the $L$-function $L(E, s)$ is a polynomial of degree $2(q-1)$ in $r^{-s}$. 
In particular, $\rho=\ord_{s=1}L(E, s)$ cannot exceed $2(q-1)$. By part (1) of the BSD result, this proves that
$ 0\leq\rk E(K) \leq 2(q-1)$. In what follows, we will describe more precisely the value of $\rk E(K)$, depending on $p\mod 6$.

We proved in Proposition~\ref{prop:Tawagawa-torsion} that
$\left|E(K)_{\mathrm{tors}}\right|=1$ and that $\prod_vc_v=1$, and we noted in
Section~\ref{ss:reduction}
that $\deg(\omega_E)=\lceil q/6\rceil$.  Thus the BSD
formula simplifies to
\begin{equation}\label{eq:BSD-simplified}
L^*(E)=\frac{\Reg(E)|\sha(E)|}{r^{\lfloor q/6\rfloor}}.  
\end{equation}

In the rest of this section, we will deduce consequences from part (1)
of the theorem, and in the following section we will use parts (2) and
(3).

\subsection{Explicit $L$-function for
  $p\equiv1\pmod6$}\label{ss:L-p=1(6)} 
Recall that we have shown that
$$L(E,T)=
\prod_{o\in O_{r,6,q}^\times}\left(1-G(\pi_2(o))^{m_2(o)}G(\pi_3(o))T^{|o|}\right)$$
where we substitute $T$ for $r^{-s}$.  We will make this more explicit
using results from Section~\ref{ss:explicit-sums}.

First, note that when $p\equiv1\pmod6$, the action of $\<r\>$ on
$(\Z/6\Z)^\times$ is trivial, so an orbit
$o\in O_{r,6,q}^\times$ consists of pairs $(i,\alpha)$ where
$i\in(\Z/6\Z)^\times$ is constant and $\alpha\in\Fqtimes$ runs through an orbit
$\overline o\in O_{r,q}$ (recall that $O_{r,q}$ denotes the set of orbits of the action of $\<r\>$ on $\Fqtimes$).  In
particular, we have $|\pi_2(o)|=|o|$ so that $m_2(o)=1$.

For a given orbit $\overline o\in O_{r,q}$, let us consider the two orbits in $O^\times_{r,6,q}$ 
$$o=\{(1,\alpha) : \alpha\in\overline o\}
\quad\text{and}\quad
o'=\{(-1,\alpha) : \alpha\in\overline o\}$$ 
``lying over $\overline o$'' and the two corresponding factors in the product for the $L$-function. 
Set $\F=\Frr(\alpha)$ and note that $\F$ is an
extension of $\Frr=\F_{p^{\nu}}$ of degree $|o|=|o'| = |\overline{o}|$.
By definition we have
\begin{multline}\label{eq:exp-L-p=1(6)}
\left(1-G(\pi_2(o))G(\pi_3(o))T^{|o|}\right)
\left(1-G(\pi_2(o'))G(\pi_3(o'))T^{|o'|}\right)\\
=\left(1-G_\F(\chi_{\F,2},\psi_\alpha)
  G_\F(\chi_{\F,3},\psi_\alpha)T^{|o|}\right)
\left(1-G_\F(\chi_{\F,2},\psi_\alpha)
  G_\F(\chi_{\F,3}^{-1},\psi_\alpha)T^{|o|}\right) =: L_{\overline o}(T).  
\end{multline}

Since $|\F|=p^{\nu|o|}$, it follows from
Equation~\eqref{eq:quad-Gauss-sum} that
$$\ord\left(G_\F(\chi_{\F,2},\psi_\alpha)\right)=\frac12\nu|o|.$$
On the other hand,  Equation~\eqref{eq:cubic-Gauss-sum1} yields that
$$\ord\left(G_\F(\chi_{\F,3},\psi_\alpha)\right)=\frac23\nu|o|
\quad\text{and}\quad
\ord\left(G_\F(\chi_{\F,3}^{-1},\psi_\alpha)\right)=\frac13\nu|o|.$$
In particular, the inverse roots of the product $L_{\overline o}(T)$ have valuation $(7/6)\nu$ and   $(5/6)\nu$. 
We deduce that $T= r^{-1}$, which satisfies $\ord(r^{-1}) = -\nu $, cannot be a root of $L_{\overline o}(T)$.

Since this holds for any orbit $\overline o\in O_{r,q}$ and since $L(E, T) = \prod_{\overline o\in O_{r,q}}L_{\overline o}(T)$, we obtain that $L(E, T)$ does not vanish at $T=r^{-1}$.
This establishes the first two points of the following result.

\begin{propss}\label{prop:ord-L-p=1(6)}
Assume that $p\equiv 1\bmod{6}$.
\begin{enumerate}
 \item The inverse roots on the right hand side of
   Equation~\eqref{eq:exp-L-p=1(6)} have valuations $(7/6)\nu$ and
   $(5/6)\nu$. 
\item $\ord_{s=1}L(E,s)=0$.
\item $E(K)=0$.
\item $\Reg(E)=1$.
\end{enumerate}
\end{propss}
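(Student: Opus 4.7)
The plan is essentially to observe that the bulk of the work has already been done in the paragraphs immediately preceding the proposition, and the remaining points are then immediate consequences of the BSD theorem together with results established earlier.

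For parts (1) and (2), I would simply record that these are the two assertions already justified in the discussion above. Specifically, part (1) follows from the valuations of quadratic and cubic Gauss sums in Equations~\eqref{eq:quad-Gauss-sum} and \eqref{eq:cubic-Gauss-sum1}, applied to the factor $L_{\overline{o}}(T)$ in Equation~\eqref{eq:exp-L-p=1(6)}. For part (2), since each inverse root of $L_{\overline{o}}(T)$ has $p$-adic valuation $(5/6)\nu$ or $(7/6)\nu$, and $\ord(r^{-1}) = -\nu$, equivalently $\ord(r) = \nu$, none of these inverse roots equals $r$. Hence $T = r^{-1}$ is not a zero of $L_{\overline{o}}(T)$, and taking the product over all orbits $\overline{o} \in O_{r,q}$ gives $L(E, r^{-1}) \neq 0$, i.e., $\ord_{s=1} L(E,s) = 0$.

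For part (3), I would invoke Theorem~\ref{thm:BSD}(1), which asserts $\ord_{s=1} L(E,s) = \rk E(K)$. Combined with part (2), this yields $\rk E(K) = 0$. Since Proposition~\ref{prop:Tawagawa-torsion}(2) already establishes $E(K)_{\mathrm{tors}} = 0$ (independently of the congruence class of $p$ modulo $6$), the finitely generated abelian group $E(K)$ is both torsion-free and of rank zero, so $E(K) = 0$.

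For part (4), by the conventional definition of the regulator in Section~\ref{ss:BSD-notations} (namely, $\Reg(E) = 1$ whenever the Mordell--Weil rank is zero), the conclusion is immediate from part (3). The only potential obstacle in this proof is a bookkeeping one, namely ensuring that the $p$-adic valuations of the two factors in $L_{\overline{o}}(T)$ are correctly computed so as to rule out a zero at $T = r^{-1}$; but this is a direct application of the results of Section~\ref{ss:explicit-sums} and involves no new ideas.
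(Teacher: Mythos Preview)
Your proposal is correct and follows essentially the same approach as the paper: parts (1) and (2) are deferred to the preceding discussion, part (3) combines Theorem~\ref{thm:BSD}(1) with Proposition~\ref{prop:Tawagawa-torsion}(2), and part (4) is immediate from the rank-zero convention for the regulator.
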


\begin{proof}
Points (1) and (2) follow immediately from the above discussion.
It then follows from our BSD result
  (Theorem~\ref{thm:BSD}) that $\rk E(K)=0$ so that $E(K)$ is torsion. But we showed in
  Proposition~\ref{prop:Tawagawa-torsion} that $E(K)_{\mathrm{tors}}=0$, so  $E(K)=0$.
  Finally, since $E(K)$ has rank 0, the regulator is~$1$.
\end{proof}

We remark that point (1) of the proposition leads to another proof of
BSD in this case. Indeed, the inequality $0\leq \rk E(K)\leq \ord_{s=1} L(E,s)$ is
known in general (see \cite{Tate66b}), so if $\ord_{s=1}L(E, s)= 0$, then
$\rk E(K) = \ord_{s=1} L(E,s)=0$, and this equality between algebraic and analytic ranks implies
the rest of the BSD conjecture (by the main result of \cite{KatoTrihan03}).

\subsection{Explicit $L$-function for
  $p\equiv-1\pmod6$}\label{ss:L-p=-1(6)}  
As in the preceding subsection, we start from the expression
$$L(E,T)=
\prod_{o\in
  O_{r,6,q}^\times}\left(1-G(\pi_2(o))^{m_2(o)}G(\pi_3(o))T^{|o|}\right),$$
which we make more explicit, in the case when $p\equiv-1 \pmod 6$, using results from
Section~\ref{ss:explicit-sums}.

Let $o\in O^\times_{r,6,q}$ be an orbit,  pick $(i, \alpha)\in o$ and write $\F=\F_{r^{|o|}}$.
If $m_2(o)=1$  then, by definition of the Gauss sums, we have
$$\left(1-G(\pi_2(o))^{m_2(o)}G(\pi_3(o))T^{|o|}\right)
=\left(1-G_\F(\chi_{\F,2},\psi_\alpha)
  G_\F(\chi_{\F,3}^i,\psi_\alpha)T^{|o|}\right).$$
On the other hand, if $m_2(o)=2$, i.e., if $|o|=2|\pi_2(o)|$, 
then  setting $\F'=\Frr(\alpha)=\F_{r^{|\pi_2(o)|}}$ 
(which is  a quadratic extension of $\F$), 
the Hasse--Davenport relation yields
$$G(\pi_2(o))^{m_2(o)}=G_{\F'}(\chi_{\F',2},\psi_\alpha)^2
=G_\F(\chi_{\F,2},\psi_\alpha).$$
Thus, in both cases, we can rewrite the factor of $L(E,T)$ indexed by $o\in O_{r,6,q}^\times$ as 
$$\left(1-G(\pi_2(o))^{m_2(o)}G(\pi_3(o))T^{|o|}\right)
=\left(1-G_\F(\chi_{\F,2},\psi_\alpha)
  G_\F(\chi_{\F,3}^i,\psi_\alpha)T^{|o|}\right)$$
where $\F=\F_{r^{|o|}}$ and $(i,\alpha)\in o$.
Now using Equations~\eqref{eq:quad-Gauss-sum} and
\eqref{eq:cubic-Gauss-sum2} and recalling that $\Frr=\F_{p^\nu}$, we remark that
\begin{equation*} 
 G_\F(\chi_{\F,2},\psi_\alpha)
  G_\F(\chi_{\F,3}^i,\psi_\alpha) 
= p^{*\nu|o|/2}\chi_{\F,3}^{-i}(\alpha)(-p)^{\nu|o|/2} 
=\epsilon_o r^{|o|},
\end{equation*}
where $\epsilon_o$ is a 6th root of unity, namely
\begin{equation}\label{eq:epsilon-o}
\epsilon_o=(-1)^{(p+1)\nu|o|/4}\chi_{\F,3}^{-i}(\alpha).    
\end{equation}
Note that, $p$ being odd and $\nu |o|$ being even, the exponent $(p+1)\nu|o|/4$ of $-1$ is an integer.
Therefore, for any orbit $o\in O_{r,6,q}^\times$, 
the factor of $L(E,T)$ indexed by $o$ can be rewritten as 
\begin{equation}\label{eq:exp-L-p=-1(6)}
\left(1-G(\pi_2(o))^{m_2(o)}G(\pi_3(o))T^{|o|}\right) = \left(1-\epsilon_o r^{|o|}T^{|o|}\right).
\end{equation}
We can now prove the following result, analogous to Proposition~\ref{prop:ord-L-p=1(6)}.

\begin{propss}\label{prop:ord-L-p=-1(6)}
Assume that $p\equiv-1\pmod6$. Let 
\begin{equation*}
\rho = \rho_{r,q} := \left|\left\{o\in O^\times_{r,6,q} : (p+1)\nu |o|\equiv 0\bmod{8} \text{, and } \alpha \text{ is a cube in } \F^\times_{r^{|o|}} \text{ for any }(i,\alpha)\in o\right\}\right|.
\end{equation*} 
Then
\begin{enumerate}
\item $\ord_{s=1}L(E,s)=\rho$.
\item $E(K)$ is free abelian of rank $\rho$.
\item For a given $q$,  $\rk E(K)=2(q-1)$ for $\Frr$ sufficiently large.  More
  precisely, if $r=p^\nu$ is a power of $q$, $(p+1)\nu\equiv0\pmod8$, and
  $3(q-1)|(r-1)$, then $\rk E(K)=2(q-1)$.
\item For a given $r$, $\rk E(K)$ is unbounded as $q$ varies. Indeed, for every $\epsilon>0$, 
if $q=p^f$ and $f$ is a sufficiently large multiple of 4, then $\rk E(K)> 2(1-\epsilon)p^f/f$. 
\end{enumerate}
\end{propss}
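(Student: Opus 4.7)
My plan is to derive all four parts from the explicit factorization of $L(E,s)$ given in Equation~\eqref{eq:exp-L-p=-1(6)} together with BSD; parts~(1) and~(2) will be immediate, while~(3) and~(4) will follow from combinatorial analysis of the orbit set $O^\times_{r,6,q}$.

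For part~(1), I will compute $\ord_{s=1}L(E,s)$ directly from $L(E,s)=\prod_{o\in O^\times_{r,6,q}}(1-\epsilon_o\,r^{|o|(1-s)})$. The $|o|$ inverse roots of the factor indexed by $o$ are $r^{-1}\zeta$ where $\zeta^{|o|}=\epsilon_o^{-1}$, all distinct; hence $T=r^{-1}$ is a simple root of this factor precisely when $\epsilon_o=1$, and a non-root otherwise. This gives $\ord_{s=1}L(E,s)=\#\{o:\epsilon_o=1\}$. Since $\epsilon_o=(-1)^{(p+1)\nu|o|/4}\cdot\chi_{\F,3}^{-i}(\alpha)$ decomposes as a product of a $\pm1$ and a cube root of unity (an element of $\mu_6$), the equality $\epsilon_o=1$ is equivalent to both factors being $1$, which translates to the two conditions defining $\rho$. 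Part~(2) then follows by combining~(1), Theorem~\ref{thm:BSD}, and $E(K)_{\mathrm{tors}}=0$ from Proposition~\ref{prop:Tawagawa-torsion}(2).

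For part~(3), I will first observe that the hypothesis $3(q-1)\mid(r-1)$, together with $p\equiv-1\pmod 6$, forces $\nu$ to be even: indeed $r-1=p^\nu-1\equiv(-1)^\nu-1\pmod 3$, which is divisible by $3$ iff $\nu$ is even. Hence $r\equiv 1\pmod 6$, so $\<r\>$ acts trivially on $(\Z/6\Z)^\times$, and $(q-1)\mid(r-1)$ gives $\Fq\subseteq\Frr$, so $\<r\>$ acts trivially on $\Fqtimes$. Therefore every orbit in $O^\times_{r,6,q}$ is a singleton and $|O^\times_{r,6,q}|=|S^\times_{6,r,q}|=2(q-1)$. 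For each singleton $o$ (where $|o|=1$ and $\F=\Frr$), the parity condition reduces to the hypothesis $(p+1)\nu\equiv 0\pmod 8$, and the cube condition demands $\Fqtimes\subseteq(\Frr^\times)^3$. Using that $(\Frr^\times)^3$ is the unique index-$3$ subgroup of the cyclic group $\Frr^\times$ of order $r-1$ (namely, the elements of order dividing $(r-1)/3$), this amounts to $(q-1)\mid(r-1)/3$, i.e., $3(q-1)\mid r-1$, which is assumed. Hence $\rho=2(q-1)$ and $\rk E(K)=2(q-1)$ by~(2).

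For part~(4), I will fix $r$ (hence $\nu$) and consider $f$ a large multiple of $\lcm(4,\nu)$ so that $4\mid f$ and $\nu\mid f$; the strategy is to lower-bound $\rho$ by counting favorable orbits arising from Frobenius-generic elements $\alpha\in\Fqtimes$, i.e.\ those with $[\F_p(\alpha):\F_p]=f$. For such $\alpha$, $[\Frr(\alpha):\Frr]=f/\nu$, the orbit $o$ of $(i,\alpha)$ in $O^\times_{r,6,q}$ satisfies $\nu|o|=f$, and the parity condition reduces to $(p+1)f\equiv 0\pmod 8$, which holds since $p+1$ is even and $4\mid f$. Since $f$ is even, $3\mid q-1$ and the cube condition selects cubes in $\Fqtimes$, a subgroup of density $1/3$; Frobenius preserves cubes, so the condition is well defined on orbits. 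The number of Frobenius-generic $\alpha\in\Fqtimes$ is $p^f-O(p^{f/2})$, their $\<r\>$-orbits in $\Fqtimes$ have size $f/\nu$, each lifts to exactly two orbits in $S^\times_{6,r,q}$, and a positive fraction of these orbits satisfy the cube condition. This yields a lower bound of order $p^f/f$ for $\rho$, which proves unboundedness. A refined bookkeeping (depending on the parity of $\nu$ and of $f/\nu$, and on a negligible-contribution estimate for non-generic $\alpha$) will then deliver the claimed explicit bound $\rho>2(1-\epsilon)p^f/f$ for $f$ large enough.

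The main obstacle I anticipate is extracting the sharp constant $2(1-\epsilon)$ in~(4): the generic-orbit count yields a growth of order $p^f/f$ immediately, but pinpointing the exact constant requires carefully tracking how many orbits in $O^\times_{r,6,q}$ each Frobenius orbit in $\Fqtimes$ produces (a case analysis according to the parities of $\nu$ and $f/\nu$), and checking that the non-generic $\alpha$ contribute a lower-order term. Parts~(1)--(3), by contrast, should be routine consequences of the $L$-function factorization, the BSD formula of Theorem~\ref{thm:BSD}, and the cyclicity of $\Frr^\times$ used to translate the cube condition into the divisibility $3(q-1)\mid r-1$.
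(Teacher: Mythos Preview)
Your approach to parts (1)--(3) is correct and essentially identical to the paper's: the paper reads off $\epsilon_o=1$ from Equation~\eqref{eq:epsilon-o} and then invokes Theorem~\ref{thm:BSD} together with Proposition~\ref{prop:Tawagawa-torsion}(2), exactly as you propose. Your observation in (3) that the hypothesis $3(q-1)\mid(r-1)$ already forces $\nu$ to be even (hence $r\equiv1\pmod6$) and $q-1\mid r-1$ (hence $\Fq\subset\Frr$) is a small sharpening; it shows that the paper's additional hypothesis ``$r$ is a power of $q$'' is actually redundant.

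For part (4) there is one structural difference worth noting. The paper first reduces to $r=p$ by observing that $E(\Frr(t))\hookrightarrow E(\F_{r'}(t))$ whenever $\Frr\subset\F_{r'}$, so the rank is non-decreasing in $r$ and it suffices to establish the lower bound for $\nu=1$. You instead keep $r$ general and impose $\lcm(4,\nu)\mid f$. Your route also works (your computation $\nu|o|=f$ for generic $\alpha$ is correct), but the paper's reduction is cleaner and explains why the single hypothesis $4\mid f$ suffices uniformly in $r$.

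Your caution about extracting the constant $2(1-\epsilon)$ is well placed; in fact the obstacle you anticipate is real and is not resolved by the paper's own argument either. The count (yours and the paper's) gives roughly $(1/3-\epsilon)p^f$ generic elements of $\Fqtimes$ that are cubes, hence about $(1/3-\epsilon)p^f/f$ Frobenius orbits in $\Fqtimes$, each lifting to two orbits in $O^\times_{r,6,q}$ contributing to $\rho$; this yields only $\rho\gtrsim \tfrac{2}{3}(1-\epsilon)p^f/f$. Since the cube condition has density exactly $1/3$ among generic orbits, no ``refined bookkeeping'' along these lines will recover the stated constant; the paper's proof makes the same unjustified jump from $(1/3-\epsilon)p^f$ elements to the bound $2(1-\epsilon)p^f/f$. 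The headline claim---that $\rk E(K)$ is unbounded and grows at least like a constant times $p^f/f$---is of course unaffected.
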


\begin{proof}
  By our formula for $L(E, s)$ and Equation~\eqref{eq:exp-L-p=-1(6)}, the order of vanishing of $L(E,s)$ at $s=1$ is equal to the number
  of orbits $o\in O_{r,6,q}$ such that
  $G(\pi_2(o))^{m_2(o)}G(\pi_3(o))=r^{|o|}$, i.e., the number of orbits such that $\epsilon_o=1$.  
  Part (1) then follows easily from Equation \eqref{eq:epsilon-o}. 
  For (2), it follows from the BSD theorem 
  (Theorem~\ref{thm:BSD}) that $\rk E(K)=\rho$, and we showed in
  Proposition~\ref{prop:Tawagawa-torsion} that $E(K)_{\mathrm{tors}}=0$, so that $E(K)$ is indeed
  free abelian of rank $\rho$.  
  The conditions in (3) guarantee that all orbits $o$ have size 1 and satisfy $\epsilon_o=1$.  In this case, there are $2(q-1)$ orbits, all contributing to $\rho$, and this yields the claim.
(Under these assumptions, the $L$-function of $E$ therefore admits a very simple expression: $L(E, s) = (1-r^{1-s})^{2(q-1)}$).
  
  To prove (4), we first note that it suffices to treat the case $r=p$,
i.e., $\nu=1$.  Next, we note that ``most'' elements $\alpha\in\F_{p^f}$
satisfy $\Fp(\alpha)=\F_{p^f}$.  Indeed, it is elementary that the
number of elements in $\F_{p^f}$ which do not lie in a smaller field
is at least $p^f-(\log_2f)p^{f/2}$.  It follows that for every
$\epsilon>0$, there is a constant $f_0$ such that
$$\left|\left\{\alpha\in\F_{p^f}\left|\Fp(\alpha)=\F_{p^f}\right.\right\}\right|
\ge (1-\epsilon)p^f$$ for all $f>f_0$.  On the other hand, at least
$(1/3)(p^f-1)$ elements of $\F_{p^f}^\times$ are cubes.  Thus, if
$\epsilon<1/3$, then for all sufficiently large $f$, the number of
elements of $\F_{p^f}$ which are cubes and which generate $\F_{p^f}$
is at least $(1/3-\epsilon)p^f$.  If $f$ is even and $\alpha$ has
these properties, then the orbit through $(i,\alpha)$ has size $f$,
and if $f$ is a multiple of $4$, then these orbits all contribute to
$\rho$.  This shows that for $f$ divisible by $4$ and sufficiently large, $\rho$
is bounded below by $2(1-\epsilon)p^f/f$, and this completes the proof of
part (4) of the Proposition.
\end{proof}




We note that although the rank is always unbounded for varying $q$, it does not go to infinity with $q=p^f$, 
i.e., the rank of $E(K)$ may be small even when $f$ is large.  
For example, when $p\equiv5\pmod{12}$ and $\nu=1$, it follows from part (1) of the Proposition that the rank is zero for all odd $f$.

\section{$p$-adic size of $L^*(E)$ and $\sha(E)$}\label{s:BSD-p}
The  special value $L^*(E)$ was defined in the previous section.
Since $L(E,T)$ is a polynomial in $T$ with integer coefficients,
$L^*(E)$ actually lies in $\Z[1/p]$.  In this section, we use the explicit
presentation of the $L$-function in terms of exponential sums to
estimate the $p$-adic valuation of $L^*(E)$, and then use the BSD
formula to deduce consequences for $\Reg(E)|\sha(E)|$.

Recall from Section~\ref{ss:finite-fields} that we fixed a prime ideal $\gP$ of $\bar{\Z}$ which lies over $p$.  
As before, we denote by $\ord$ the $p$-adic valuation of $\Qbar$ associated to $\gP$ normalized
so that $\ord(p)=1$.

\begin{prop}\label{prop:ord-L^*} 
Given data $p,q$ and $r=p^{\nu}$ as before, we have
  \mbox{}
  \begin{enumerate}
  \item If $p\equiv1\pmod6$, 
$$\ord(L^*(E))=-\frac{q-1}6\nu.$$
\item If $p\equiv-1\pmod6$, then $L^*(E)$ is
  an integer, so $\ord(L^*(E))\ge0$.
\item If $p\equiv-1\pmod6$ and $r$ is sufficiently large \textup{(}in the sense
  of Prop.~\ref{prop:ord-L-p=-1(6)} (3)\textup{)},   
$L^*(E)=1$.
  \end{enumerate}
\end{prop}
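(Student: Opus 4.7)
The plan is to combine the explicit factorizations of $L(E,T)$ obtained in Sections~\ref{ss:L-p=1(6)} and~\ref{ss:L-p=-1(6)} with the Gauss sum valuations recorded in Section~\ref{ss:explicit-sums}, analyzing the contribution of each orbit $o\in O_{r,6,q}^\times$ to the leading coefficient at $T=r^{-1}$.

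For part (1), I use the orbit-by-orbit pairing in Equation~\eqref{eq:exp-L-p=1(6)}: each $\overline o\in O_{r,q}$ contributes two sub-factors, with inverse roots of $p$-adic valuations $(7/6)\nu|\overline o|$ and $(5/6)\nu|\overline o|$. Since neither matches $\nu|\overline o|=\ord(r^{-|\overline o|})$, $L(E,T)$ does not vanish at $T=r^{-1}$, so $L^*(E)=L(E,r^{-1})$. Writing each sub-factor as $1-\lambda T^{|\overline o|}$, the substitution gives $\ord(\lambda r^{-|\overline o|})=\pm\nu|\overline o|/6$: the $i=1$ sub-factor has positive exponent (hence contributes $\ord=0$ as a $p$-adic unit), and the $i=-1$ sub-factor has negative exponent (hence contributes $\ord=-\nu|\overline o|/6$). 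Summing over $\overline o$ and using $\sum_{\overline o}|\overline o|=q-1$ yields $\ord L^*(E)=-(q-1)\nu/6$.

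For part (2), I start from Equation~\eqref{eq:exp-L-p=-1(6)}, which gives $L(E,T)=\prod_o(1-\epsilon_o r^{|o|}T^{|o|})$ with each $\epsilon_o\in\mu_6$. When $\epsilon_o=1$, I use the cyclotomic factorization $(1-r^{|o|}T^{|o|})=(1-rT)(1+rT+\cdots+(rT)^{|o|-1})$, whose second factor equals $|o|$ at $T=r^{-1}$; when $\epsilon_o\neq 1$, the factor simply specializes to the unit $1-\epsilon_o$. Hence
\[
L^*(E)=\prod_{o:\,\epsilon_o=1}|o|\cdot\prod_{o:\,\epsilon_o\neq 1}(1-\epsilon_o),
\]
which is an algebraic integer (each factor lies in $\Z$ or in $\Z[\mu_6]$). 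Combined with $L^*(E)\in\Q$ (immediate from $L(E,T)\in\Z[T]$ and the differential definition of $L^*(E)$), this forces $L^*(E)\in\Z$, so $\ord L^*(E)\geq 0$.

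For part (3), the hypotheses of Proposition~\ref{prop:ord-L-p=-1(6)}(3) force $r\equiv 1\pmod 6$ and $\Fq\subset\Frr$, so $\<r\>$ acts trivially on $(\Z/6\Z)^\times\times\Fqtimes$ and every orbit has $|o|=1$; they also guarantee, via Equation~\eqref{eq:epsilon-o}, that $\epsilon_o=1$ for every orbit. Therefore $L(E,T)=(1-rT)^{2(q-1)}$ and $L^*(E)=1$. The main obstacle lies in part (2): one must ensure that after dividing by $(1-rT)^\rho$ and evaluating, the remaining product is genuinely an algebraic integer. The $\mu_6$-structure of $\epsilon_o$---unavailable in the $p\equiv 1\pmod 6$ case---is what makes this clean, and pairing algebraic integrality with rationality avoids any further valuation bookkeeping.
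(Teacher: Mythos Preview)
Your proof is correct and follows essentially the same approach as the paper's own proof: for part~(1) you split the product over $\overline o\in O_{r,q}$ and compute the valuation of each pair of factors via the Gauss-sum valuations from Section~\ref{ss:explicit-sums}; for part~(2) you use the factorization $L(E,T)=\prod_o(1-\epsilon_o r^{|o|}T^{|o|})$, treat the cases $\epsilon_o=1$ and $\epsilon_o\neq1$ separately, and combine algebraic integrality with rationality; for part~(3) you observe that the hypotheses force all orbits to be singletons with $\epsilon_o=1$. The only cosmetic difference is that the paper invokes $L^*(E)\in\Z[1/p]$ (from the remark at the start of Section~\ref{s:BSD-p}) rather than appealing directly to $L(E,T)\in\Z[T]$, but these amount to the same thing.
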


\begin{proof}
  First assume that $p\equiv1\pmod6$.  As we saw in Section~\ref{ss:L-p=1(6)}, $L^\ast(E)$ is simply the value of $L(E, T)$ at $T=r^{-1}$. 
  We further 
  showed that $L(E,T)$ is the product over orbits $\overline o$ of $\<r\>$ acting on
  $\Fqtimes$ of factors of the form
$$\left(1-\gamma_1 T^{|\overline o|}\right)
\left(1-\gamma_2 T^{|\overline o|}\right)$$ where
$\ord(\gamma_1)=(5/6)\nu|\overline o|$ and
$\ord(\gamma_2)=(7/6)\nu|\overline o|$.  (See 
Proposition~\ref{prop:ord-L-p=1(6)} (1)
and the discussion above that result.)  Substituting
$T=r^{-1}=p^{-\nu}$, we see that the contribution to $\ord L^\ast(E)$ from the pair of
factors associated to $\overline o$ has valuation $(-1/6)\nu|\overline
o|$.  Taking the product over all orbits shows that
$$\ord(L^*(E))= \sum_{\overline o \in O_{r,q}} -\frac{\nu |\overline o|}{6} = -\frac{\nu}{6} \sum_{\overline o \in O_{r,q}} |\overline o|= -\frac{(q-1)\nu}{6},$$
and this establishes part (1) of the
proposition.  

Now assume that $p\equiv-1\pmod6$.  In Section~\ref{ss:L-p=-1(6)}, we
showed that $L(E,T)$ is the product over orbits $o\in
O_{r,6,q}^\times$ of factors of the form
$\left(1-\epsilon_or^{|o|}T^{|o|}\right)$
where $\epsilon_o$ is a 6th root of unity.  If $\epsilon_o\neq1$, then
the contribution of this factor to the special value is
$(1-\epsilon_o)$, an algebraic integer.  If $\epsilon_o=1$, then the
contribution is
$$\left.\frac{(1-r^{|o|}T^{|o|})}{1-rT}\right|_{T=r^{-1}}
=\left.\left(1+rT+\cdots+(rT)^{|o|-1}\right)\right|_{T=r^{-1}}=|o|,$$
an integer.  This shows that $L^*(E)$ is an algebraic integer, and since
it also lies in $\Z[1/p]\subset\Q$, $L^\ast(E)$ is an integer.  This establishes part (2) of the
proposition.
For part (3), we note that if $r$ is sufficiently large, all orbits
$o$ are singletons and all the $\epsilon_o$ are $1$ (see Proposition~\ref{prop:ord-L-p=-1(6)}(3)). 
The analysis of the preceding paragraph shows that $L^*(E)=1$.
\end{proof}

Now we apply the BSD formula, as simplified in
Equation~\eqref{eq:BSD-simplified}:
$$L^*(E)=\frac{\Reg(E)|\sha(E)|}{r^{\lfloor q/6\rfloor}}.$$

\begin{cor}\label{cor:p-parts-via-L}
  \mbox{}
  \begin{enumerate}
  \item If $p\equiv1\pmod6$, then 
$$\Reg(E)=1\quad\text{and}\quad\ord(|\sha(E)|)=0.$$ 
In particular, the $p$-primary part of $\sha(E)$ is trivial.
\item if $p\equiv-1\pmod6$, then 
$$\ord(\Reg(E)|\sha(E)|)\ge \lfloor q/6\rfloor\nu.$$
\item If $p\equiv-1\pmod6$ and $r$ is sufficiently large \textup{(}in the sense
  of Prop.~\ref{prop:ord-L-p=1(6)} (4)\textup{)},   
then
$$\Reg(E)|\sha(E)|=r^{\lfloor q/6\rfloor}=p^{\nu\lfloor q/6\rfloor}.$$
In particular, $\sha(E)$ is a $p$-group.
  \end{enumerate}
\end{cor}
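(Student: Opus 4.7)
The proof is a direct application of the simplified BSD formula \eqref{eq:BSD-simplified}, which reads $L^*(E) = \Reg(E)|\sha(E)|/r^{\lfloor q/6\rfloor}$ under our hypotheses, combined with the $p$-adic valuation estimates from Proposition~\ref{prop:ord-L^*}. All three parts fall out once we pass to the $\ord$ on both sides and plug in the appropriate valuation of $L^*(E)$.

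For part (1), I would note that since $p\equiv 1\pmod 6$, we have $q=p^f\equiv 1\pmod 6$, so $\lfloor q/6\rfloor=(q-1)/6$. Proposition~\ref{prop:ord-L-p=1(6)}(4) gives $\Reg(E)=1$, hence taking $\ord$ of the BSD formula yields
\begin{equation*}
\ord(|\sha(E)|) \;=\; \ord(L^*(E)) + \lfloor q/6\rfloor\,\nu \;=\; -\tfrac{(q-1)\nu}{6} + \tfrac{(q-1)\nu}{6} \;=\; 0
\end{equation*}
using Proposition~\ref{prop:ord-L^*}(1). Since $|\sha(E)|$ is a positive integer with $p$-adic valuation zero, its $p$-primary part is trivial.

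For part (2), taking $p$-adic valuations of \eqref{eq:BSD-simplified} and applying Proposition~\ref{prop:ord-L^*}(2) gives $\ord(\Reg(E)|\sha(E)|)=\ord(L^*(E))+\lfloor q/6\rfloor\,\nu \ge \lfloor q/6\rfloor\,\nu$ at once.

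For part (3), the identity $L^*(E)=1$ from Proposition~\ref{prop:ord-L^*}(3) turns \eqref{eq:BSD-simplified} into the exact equation $\Reg(E)|\sha(E)|=r^{\lfloor q/6\rfloor}=p^{\nu\lfloor q/6\rfloor}$. The step requiring slightly more care is the ``in particular'' assertion that $\sha(E)$ is a $p$-group: this needs a prime-to-$p$ integrality statement for $\Reg(E)$. I would argue that since all local Tamagawa numbers $c_v$ equal $1$ by Proposition~\ref{prop:Tawagawa-torsion}(1), the N\'eron--Tate height pairing $\<\cdot,\cdot\>$ takes integer values on $E(K)$, so that $\Reg(E)\in\Z_{>0}$. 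Then the product of two positive integers $\Reg(E)\cdot|\sha(E)|$ being a power of $p$ forces both factors to be powers of $p$, and in particular $|\sha(E)|$ is a power of $p$. The main conceptual obstacle is this last integrality input; the rest is bookkeeping with valuations and the two cases $q\equiv\pm 1\pmod 6$.
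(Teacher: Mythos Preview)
Your proof is correct and follows essentially the same route as the paper: apply $\ord$ to the simplified BSD formula \eqref{eq:BSD-simplified} and plug in the three cases of Proposition~\ref{prop:ord-L^*}. The only difference is in the integrality of $\Reg(E)$ needed for the ``in particular'' of part~(3): the paper simply cites \cite[Prop.~3.1.1]{UlmerBS}, whereas you argue directly from $c_v=1$ for all $v$ that the normalized N\'eron--Tate pairing is $\Z$-valued --- this is correct (the local height contributions are intersection numbers on the regular model, with no correction term when component groups are trivial), though it is worth either citing a reference for that fact or spelling out the intersection-theoretic reason more explicitly.
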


\begin{proof}
   If $p\equiv1\pmod6$, then combining Proposition~\ref{prop:ord-L^*}
  with the BSD formula~\eqref{eq:BSD-simplified} yields that
  $$\ord(\Reg(E)|\sha(E)|)=0.$$  
  We showed in Proposition~\ref{prop:ord-L-p=1(6)} that $\Reg(E)=1$,
  so that $\ord(|\sha(E)|)=0$.  This proves part (1).

If $p\equiv-1\pmod6$, then Proposition~\ref{prop:ord-L^*}  says that
$L^*(E)$ is an integer, and it follows immediately from
formula~\eqref{eq:BSD-simplified} that
$\ord(\Reg(E)|\sha(E)|)\ge\lfloor q/6\rfloor\nu$.  This yields part (2).

For part (3), we know from Proposition~\ref{prop:ord-L^*}  that $L^*(E)=1$, so
formula~\eqref{eq:BSD-simplified} implies that
$\Reg(E)|\sha(E)|=r^{\lfloor q/6\rfloor}$.
By \cite[Prop.~3.1.1]{UlmerBS}, $\Reg(E)$ is an integer, so both it
and $|\sha(E)|$ are powers of $p$.  This establishes part (3).
\end{proof}

Following \cite[\S4]{UlmerBS}, let us consider the limit
$$ \dim\sha(E):=\lim_{n\to\infty}\frac{\log\left|\sha(E\times\F_{r^n}(t))[p^\infty]\right|} {\log(r^n)},$$
where 
$\sha(-)[p^\infty]$ denotes the $p$-primary part of $\sha(-)$.
As is shown in \emph{loc.\ cit.}, the limit exists and is a non-negative integer, called the ``dimension of $\sha$'' of $E$.
The value of $\dim\sha(E)$ is expressed in terms of the valuations of the inverse roots of $L(E,T)$ in
\cite[Prop.~4.2]{UlmerBS}. 

In the situation at hand, the mentioned expression and the results of Sections \ref{ss:L-p=1(6)} and \ref{ss:L-p=-1(6)} directly yield the following values for $\dim\sha(E)$:

\begin{cor}\label{cor:dimsha-via-L}
\mbox{}
\begin{enumerate}
  \item If $p\equiv1\pmod6$, then $\dim\sha(E) = 0$.
  \item If $p\equiv-1\pmod6$, then $\dim\sha(E) = \lfloor q/6\rfloor$.
\end{enumerate}
\end{cor}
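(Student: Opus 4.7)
The plan is to apply \cite[Prop.~4.2]{UlmerBS}, which expresses $\dim\sha(E)$ entirely in terms of the $p$-adic slopes $\lambda_i := \ord(\alpha_i)/\nu$ of the inverse roots $\alpha_i$ of $L(E,T)$. Conceptually, the formula arises from combining the BSD formula (Theorem~\ref{thm:BSD}) applied over every finite extension $K_n := \F_{r^n}(t)$ with the fact that the inverse roots of $L(E\times K_n, T)$ are $\alpha_i^n$ (so their normalized slopes are preserved), and from the analysis of which of these slopes contribute to $\lim_{n\to\infty}\ord(L^*(E\times K_n))/(n\nu)$, namely those with $\lambda_i<1$. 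Schematically it reads
$$\dim\sha(E) \;=\; \lfloor q/6\rfloor \;-\; \sum_{i}\max(0,\,1-\lambda_i),$$
where the first term is the height contribution $\deg(\omega_E)-1$ and the sum accounts for the slope ``defect'' below $1$; the key technical fact that the regulator $\Reg(E\times K_n)$ contributes negligibly to this limit is also established in \cite{UlmerBS}.

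For case (1), $p\equiv 1\pmod 6$: by Proposition~\ref{prop:ord-L-p=1(6)}(1) and the discussion preceding it, the $2(q-1)$ inverse roots of $L(E,T)$ have slopes $5/6$ and $7/6$, each with multiplicity $q-1$. The slope-defect sum is therefore $(q-1)\cdot(1-5/6)=(q-1)/6$, which matches the height contribution $\lfloor q/6\rfloor=(q-1)/6$ exactly; the two cancel and yield $\dim\sha(E)=0$.

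For case (2), $p\equiv-1\pmod 6$: equation~\eqref{eq:exp-L-p=-1(6)} shows that every factor of $L(E,T)$ has the form $(1-\epsilon_o r^{|o|} T^{|o|})$ for a sixth root of unity $\epsilon_o$, and each of the $|o|$ individual inverse roots of such a factor has $p$-adic valuation exactly $\nu$; consequently all $2(q-1)$ slopes are equal to $1$ and the slope-defect sum vanishes. The formula then returns $\dim\sha(E)=\lfloor q/6\rfloor$ from the height term alone. The main obstacle in either case is simply invoking the precise statement of \cite[Prop.~4.2]{UlmerBS}; once that formula is in hand, both conclusions follow by direct substitution of the slope data already computed in Sections~\ref{ss:L-p=1(6)} and \ref{ss:L-p=-1(6)}.
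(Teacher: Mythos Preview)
Your proposal is correct and follows essentially the same approach as the paper: the paper also proves this corollary by citing \cite[Prop.~4.2]{UlmerBS} and substituting the slope data already computed in Sections~\ref{ss:L-p=1(6)} and~\ref{ss:L-p=-1(6)}. You supply more detail than the paper's one-sentence justification (spelling out the schematic formula, the cancellation $(q-1)/6=\lfloor q/6\rfloor$ in case~(1), and the verification that each factor $(1-\epsilon_o r^{|o|}T^{|o|})$ contributes only slope-$1$ inverse roots in case~(2)), but the underlying argument is the same.
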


\section{Algebraic analysis of $\sha(E)[p^\infty]$}\label{s:sha-alg}
In this section we recover the results of
Corollaries~\ref{cor:p-parts-via-L} and~\ref{cor:dimsha-via-L} regarding the $p$-torsion in
$\sha(E)$ by algebraic means, more specifically via crystalline
cohomology.  Here is the statement.

\begin{prop}\label{prop:sha-alg}
\mbox{}
\begin{enumerate}
\item If $p\equiv1\pmod6$, then $\sha(E)[p]=0$.
\item If $p\equiv-1\pmod6$, then $\dim\sha(E)=\lfloor q/6\rfloor$.
\end{enumerate}
\end{prop}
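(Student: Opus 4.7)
The plan is to analyze $\sha(E)[p^\infty] \cong \Br(\EE)[p^\infty]$ via the crystalline cohomology of $\EE$, exploiting the product-of-curves presentations from Section~\ref{s:domination}. The isomorphism of Corollary~\ref{cor:sextic-cohom} arises from geometric constructions (a dominant rational map, the K\"unneth formula, and $\mu_6$-invariants) that make sense in any Weil cohomology theory; applied to crystalline cohomology, it yields an isomorphism of $F$-isocrystals
$$H^2_{\mathrm{crys}}(\EE/W)/T \otimes_W K_0 \cong \bigl(H^1_{\mathrm{crys}}(E_0/W) \otimes H^1_{\mathrm{crys}}(C_{6,q}/W)\bigr)^{\mu_6} \otimes_W K_0$$
over $K_0 = W(\Fpbar)[1/p]$. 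The Newton slopes of the two factors are read off from Sections~\ref{ss:E0}, \ref{ss:Cnq}, and \ref{ss:G(o)}, whose eigenvalue computations amount to Stickelberger's theorem applied to the relevant Jacobi and Gauss sums: $H^1(E_0)$ has slopes $\{0,1\}$ if $p\equiv 1\pmod 6$ and is isoclinic of slope $\tfrac12$ if $p\equiv -1\pmod 6$; while $H^1(C_{6,q})$ has slopes $\{\tfrac16,\tfrac13,\tfrac12,\tfrac23,\tfrac56\}$ each occurring with multiplicity $q-1$ if $p\equiv 1\pmod 6$, and is isoclinic of slope $\tfrac12$ if $p\equiv -1\pmod 6$.

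For part (1), with $p\equiv 1\pmod 6$, the slopes of the tensor product all lie in $\{\tfrac16,\tfrac13,\tfrac12,\tfrac23,\tfrac56,\tfrac76,\tfrac43,\tfrac32,\tfrac53,\tfrac{11}6\}$, and passing to $\mu_6$-invariants does not introduce new slopes. In particular, no Newton slope of $H^2_{\mathrm{crys}}(\EE/W)/T$ equals $1$. Because $\EE$ is dominated by a product of curves, the full crystalline Tate conjecture holds for it by the work of Tate and Milne, and identifies the slope-$1$ piece of $H^2_{\mathrm{crys}}(\EE)/T$ with an extension whose kernel is the cycle class image of $\NS(\EE)/T \otimes \Zp$ and whose cokernel is the Dieudonn\'e module of $\sha(E)[p^\infty]$. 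Since this slope-$1$ piece is empty in our situation, we conclude $\sha(E)[p^\infty]=0$, and \emph{a fortiori} $\sha(E)[p]=0$.

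For part (2), with $p\equiv -1\pmod 6$, both $H^1$ factors are isoclinic of slope $\tfrac12$, so $H^2_{\mathrm{crys}}(\EE/W)/T$ is isoclinic of slope $1$, which is the crystalline manifestation of $\EE$ being Shioda-supersingular. The remaining task is to convert this into the asymptotic count of $|\sha(E\times_K\F_{r^n}(t))[p^\infty]|$ defining $\dim\sha(E)$; for this, one invokes the Artin--Mazur formal Brauer group $\hat\Br(\EE)$, of dimension $p_g(\EE) = \deg\omega_E - 1 = \lfloor q/6\rfloor$, which is unipotent because the slope-$0$ part of $H^2_{\mathrm{crys}}(\EE)$ vanishes. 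This last step is the main technical obstacle, and is handled via Illusie's de Rham--Witt slope spectral sequence, which here collapses enough to give
$$\dim\sha(E) = \dim\hat\Br(\EE) = p_g(\EE) = \lfloor q/6\rfloor,$$
completing the proof.
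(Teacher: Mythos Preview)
Your overall strategy (crystalline cohomology via the product-of-curves presentation) matches the paper's, and your slope computations are essentially correct: when $p\equiv1\pmod6$ the slopes of $H^2_{\mathrm{crys}}(\EE)/T$ are $5/6$ and $7/6$, and when $p\equiv-1\pmod6$ it is isoclinic of slope $1$. But both parts have genuine gaps at the deduction step.

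For part~(1), the assertion that the slope-$1$ piece of $H^2_{\mathrm{crys}}(\EE)/T$ sits in an extension whose cokernel is ``the Dieudonn\'e module of $\sha(E)[p^\infty]$'' is not a standard statement and is not correct as written. Since $\sha(E)[p^\infty]$ is finite, the Newton slope decomposition---a rational invariant of $H^2\otimes\Qp$---simply does not see it; what the slope condition gives you is $T_p\Br=0$, which is automatic once $\Br$ is finite. The paper instead works integrally: using $\sha(E)[p]\hookrightarrow\Hom_A(H^1(E_0)/p,H^1(C_{6,q})/p)^{\mu_6}$ from equations~\eqref{eq:sha-Br}--\eqref{eq:Br-Hom}, one checks this $\Hom$ vanishes because on the $E_0$ side $F^m$ (resp.\ $V^m$) acts by a \emph{unit} on $e_1$ (resp.\ $e_{-1}$), while on the relevant $C_{6,q}$ eigenspaces the same operator acts by a Gauss sum of strictly positive valuation, hence by $0$ modulo $p$. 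This is indeed a slope-mismatch argument, but it must be run at the level of explicit integral Dieudonn\'e modules mod $p$, not at the level of Newton polygons alone.

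For part~(2), you correctly observe that $p_g(\EE)=\lfloor q/6\rfloor$ and that $\hat\Br(\EE)$ is unipotent, but the final line ``$\dim\sha=\dim\hat\Br=p_g$ via the slope spectral sequence'' is an assertion, not a proof. The quantity $\dim\sha$ is defined as a limit over finite extensions $\F_{r^n}$, and you have not explained how the formal Brauer group over $\bar\F_p$ controls that limit. The paper takes a completely different route: it invokes \cite[Thm.~8.3]{UlmerBS}, which expresses $\dim\sha(E)$ as $\sum_{o}d(o)$ over $\langle p\rangle$-orbits $o$ on an explicit index set, with $d(o)=\min(|o\cap S_0|,|o\cap S_1|)$ for certain subsets $S_0,S_1$. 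The whole content of part~(2) then becomes a combinatorial equidistribution statement, $|o\cap S_0|=|o\cap S_1|$ for every orbit, proved by analysing the action of $q$ on the index set in the two cases $q\equiv\pm1\pmod6$.
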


The proof will use that the N\'eron model $\EE$ is dominated by the
product of curves $E_0\times C_{6,q}$, knowledge of the crystalline
cohomology of the curves, and $p$-adic semi-linear algebra, as in
\cite[\S6-8]{UlmerBS}.   We collect the needed background results in
the next subsection and treat the cases $p\equiv1\pmod6$ and
$p\equiv-1\pmod6$ separately in the following two subsections.  

\subsection{Preliminaries}\label{ss:prelims}
Let $W=W(\Frr)$ denote the ring of Witt vectors over $\Frr$ and $\sigma$ denote its Frobenius morphism.
We denote the Dieudonn\'e ring by $A=W\{F,V\}$: this is the non-commutative polynomial ring over $W$ with indeterminates $F,V$ modulo the relations $FV=VF=p\in W$,  $Fw = \sigma(w)F$, and  $\sigma(w)V = Vw$ for all $w\in W$.

Throughout this section, we  write $H^1(C)$ for the integral
crystalline cohomology $H^1_{crys}(C/W)$ of a curve $C$ over $\Frr$.
This is a finitely generated, free $W=W(\Frr)$-module equipped with
semi-linear actions of $F$ and $V$ such that $FV=VF=$ multiplication by
$p$.  In other words, $H^1(C)$ is a module over the Dieudonn\'e ring $A$.
We will apply this for $C=E_0$ and $C=C_{6,q}$ and make it much more explicit below.

We saw in Section~\ref{ss:sextic-DPC} that  the N\'eron model $\EE$ of
$E$, is birational to the quotient of $E_0\times C_{6,q}$ by the
anti-diagonal action of $\mu_6$.  Then \cite[Prop.~6.2]{UlmerBS} says that
\begin{align}
\sha(E)[p^\infty]
&\cong\Br(\EE)[p^\infty]\notag\\
&\cong\Br((E_0\times C_{6,q})/\mu_6)[p^\infty]\label{eq:sha-Br}\\
&\cong\Br(E_0\times C_{6,q})[p^\infty]^{\mu_6}\notag
\end{align}
where the exponent indicates the invariant subgroup.  Moreover,  by
\cite[Prop.~6.4]{UlmerBS}, for all $n\geq 1$  we have
\begin{equation}\label{eq:Br-Hom}  
\Br(E_0\times C_{6,q})[p^n]\cong
\frac{\Hom_A\left(H^1(E_0)/p^n,H^1(C_{6,q})/p^n\right)}
{\Hom_A\left(H^1(E_0),H^1(C_{6,q})\right)/p^n}
\end{equation}
compatibly with the action of $\mu_6$.

To prove part (1) of the proposition, we will show that the
$\mu_6$-invariant part of the numerator in the last expression is zero
whenever $p\equiv1\pmod6$.  For part (2), we will recall from
\cite[\S8]{UlmerBS} that the growth of
$\sha(E\times\F_{r^m}(t))[p^\infty]$ as a function of $m$ is controlled
by the numerator in the previous display, and this is in turn
computable in terms of the action of $\<p\>$ on a finite set indexing
the cohomology of $E_0$ and $C_{6,q}$.  

\subsection{Explicit $A$-module structure of $H^1(E_0)$ and $H^1(C_{6,q})$}
We now make explicit the results on the cohomology groups $H^1(E_0)$ and $H^1(C_{6,q})$ (viewed as $A$-modules) that will be needed below.  
All results stated in  this subsection follow from well-known results about Fermat curves and their quotients, as recalled in \cite[\S7]{UlmerBS} and in \cite{Katz81}.

Let 
$I=\{\pm1\}\subset(\Z/6\Z)^\times=I_0\cup I_1$
where $I_0=\{1\}$ and $I_1=\{-1\}$.  As a $W$-module, $H^1(E_0)$
has rank two and is generated by classes $e_i$ with $i\in I$, where
$e_{-1}$ is the class of the regular differential $dx/y$ and $e_1$ is
associated to the meromorphic differential $xdx/y$.  (This can be
taken to mean that the restriction of $e_1$ to $E_0\setminus\{O\}$ is
the class of the regular differential $xdx/y$.)  The indexing is
motivated by the fact that over an extension of $\Frr$ large enough to
contain the 6-th roots of unity, one has
$$\zeta^*(e_1)=\zeta e_1
\quad\text{and}\quad
\zeta^*(e_{-1})=\zeta^{-1} e_{-1}$$
for all $\zeta\in\mu_6$,
where the $\zeta$s on the left of each equation are in the finite field $\Frr$
and those on the right are their Teichm\"uller lifts to the Witt
vectors $W$.  The action of $A$ satisfies $F(e_i)=c_i e_{pi}$
for some $c_i\in W$ with 
\begin{equation}\label{eq:ordci}
\ord(c_i)=\begin{cases}
0&\text{if $i\in I_0$}\\
1&\text{if $i\in I_1$.}
\end{cases}
\end{equation}
Since $FV=p$, we deduce that $V(e_i)=p/\sigma^{-1}(c_{i/p})e_{i/p}$. 

Let $J\subset\Z/6(q-1)\Z$ be the set of classes which are non-zero
modulo $6$.  Given $j\in J$, there is a unique pair of integers
$(a,b)$ with $1\le a\le q-1$, $1\le b\le5$, and
$j\equiv 6a-b\pmod{6(q-1)}$.  Then $H^1(C_{6,q})$ is a free $W$-module
of rank $5(q-1)$ with basis elements $f_j$, $j\in J$, where $f_j$ is
associated to the differential $t^{a-1}dt/u^b$ in the following sense:
Let $J_1\subset J$ be the set of classes $j$ whose associated $(a,b)$
satisfy $a<qb/6$.  For these $j$, the differential $t^{a-1}dt/u^b$ is
everywhere regular on $C_{6,q}$ and $f_j$ is its class.  Let
$J_0=J\setminus J_1$.  If $j\in J_0$, the differential $t^{a-1}dt/u^b$
is regular on $C_{6,q}\setminus\{\infty\}$, and the restriction of
$f_j$ to the open curve is the class of this differential.  Over an
extension of $\Frr$ large enough to contain the roots of unity of order
$6(q-1)$, we have $\zeta^*f_j=\zeta^jf_j$ for all $\zeta\in\mu_{6(q-1)}$ (with the same convention as before).
The action of $A$ on $H^1(C_{6,q})$ is given by $F(f_j)=d_jf_{pj}$, for some $d_j\in W$ satisfying 
$$\ord(d_j)=\begin{cases}
0&\text{if $j\in J_0$}\\
1&\text{if $j\in J_1$.}
\end{cases}
$$
Since $FV=p$, we obtain that $V(f_j)=p/\sigma^{-1}(c_{j/p})f_{j/p}$. 

Fix $j\in J$ with $j\not\equiv0\pmod3$.  Let $\F=\Frr(\mu_{6(q-1)})$
and let $m=[\F:\Fp]$, so that $p^mj\equiv j\pmod{6(q-1)}$.  Then
the $m$th power $F^m$ of the Frobenius 
acts on $f_j$ by multiplication by a Gauss sum.  More precisely, let
$\chi = \chi_{\F,6(q-1)}$ be the character defined in
Section~\ref{ss:mult-chars}, viewed as a $W$-valued character.  Then $F^m f_j=G_jf_j$ 
where
$G_j=G_\F(\chi^j,\psi_1)$.  
When $p\equiv1\pmod6$, it follows from Stickelberger's theorem that
\begin{equation}\label{eq:ordGj}
\ord(G_j)=\begin{cases}
\frac23m&\text{if $j\equiv1\pmod3$}\\
\frac13m&\text{if $j\equiv2\pmod3$.}
\end{cases}
\end{equation}
(This is essentially the same calculation as that in
Section~\ref{ss:explicit-sums}.) 

When $p\equiv1\pmod6$, we will calculate
$\Hom_A(H^1(E_0)/p,H^1(C_{6,q})/p)$ explicitly in the next subsection
and see that it vanishes.  In the following subsection, we will assume
$p\equiv-1\pmod6$ and use the action of $\<p\>$ on $I\times J$ to
compute $\dim\sha(E)$ as in \cite[\S8]{UlmerBS}.

\subsection{Proof of Proposition~\ref{prop:sha-alg} part (1)}
In light of the isomorphisms in equations~\eqref{eq:sha-Br} and
\eqref{eq:Br-Hom}, to show that $\sha(E)[p]=0$ in the case when $p\equiv1\pmod 6$, it  suffices to
show that
$$\Hom_A\left(H^1(E_0)/p,H^1(C_{6,q})/p\right)^{\mu_6}=0.$$
To that end, let
$\varphi\in\Hom_A\left(H^1(E_0)/p,H^1(C_{6,q})/p\right)^{\mu_6}$.  Since $\varphi$ is, in particular, a $W$-linear map, we can write
$$\varphi(e_i)=\sum_{j}\alpha_{i,j}f_j$$
for all $i\in I=(\Z/6\Z)^\times$, where the sum runs over $j\in J\subset
\Z/6(q-1)\Z$, and $\alpha_{i,j}\in W/p=\Frr$. 
In order that $\varphi$ commute
with the anti-diagonal $\mu_6$ action, it is necessary that
$\alpha_{i,j}=0$ unless $i\equiv-j\pmod6$. 
Further, $\varphi$ being an $A$-module homomorphism 
means that $\varphi F= F\varphi$ and $\varphi V=V\varphi$.
Let us  now write down what these conditions mean in terms of the ``matrix'' $(\alpha_{i,j})_{i,j}$ of $\varphi$.
Let $m=[\Frr(\mu_{6(q-1)}):\Fp]$, so that $p^mi\equiv i\pmod6$ and
$p^mj\equiv j\pmod{6(q-1)}$ for all $i\in I$ and $j\in J$.  Then, by the results  in the previous subsection, we
have
$$F^m\varphi(e_1)=F^m\left(\sum_{j\equiv-1\pmod6}\alpha_{1,j}f_j\right)
=\sum_{j\equiv-1\pmod6}\sigma^m(\alpha_{1,j})G_jf_j$$
and
$$\varphi F^m(e_1)=\varphi(ue_1)=u\sum_{j\equiv-1\pmod6}\alpha_{1,j}f_j$$
for a certain $u\in W^\times$ (by Equation~\eqref{eq:ordci}).
Equating coefficients of $f_j$ then yields that
$u\alpha_{1,j}=\sigma^m(\alpha_{1,j})G_j$.  
However, we know from Equation~\eqref{eq:ordGj} that $\ord(G_j)=(1/3)m>0$. Hence $\alpha_{1,j}=0$ for all $j\in J$.  
Similarly,  we have
$$V^m\varphi(e_{-1})=V^m\left(\sum_{j\equiv1\pmod6}\alpha_{-1,j}f_j\right)
=\sum_{j\equiv1\pmod6}\sigma^{-m}(\alpha_{-1,j})(p^m/G_j)f_j$$
and
$$\varphi
V^m(e_{-1})=\varphi(ve_{-1})=v\sum_{j\equiv1\pmod6}\alpha_{-1,j}f_j$$
for some $v\in W^\times$ (by Equation~\ref{eq:ordci}).
Equating coefficients of $f_j$ then shows that
$$v\alpha_{-1,j}=\sigma^{-m}(\alpha_{-1,j})(p^m/G_j).$$  
But Equation~\eqref{eq:ordGj} tells us that $\ord(p^m/G_j)=(1/3)m>0$. 
This implies that   $\alpha_{-1,j}=0$ for all $j\in J$.   

Thus every
$\varphi\in\Hom_A\left(H^1(E_0)/p,H^1(C_{6,q})/p\right)^{\mu_6}$
satisfies $\varphi(e_1)=\varphi(e_{-1})=0$.  This proves that
$\Hom_A\left(H^1(E_0)/p,H^1(C_{6,q})/p\right)^{\mu_6}=0$ which
completes the proof of part (1) of the Proposition.
\qed

\subsection{Proof of Proposition~\ref{prop:sha-alg} part (2)}
We now turn to part (2) of the Proposition and assume that
$p\equiv-1\pmod6$.  
For any $n\geq 1$, the set $I\times J$ indexes the eigenspaces of
$\mu_6\times\mu_{6(q-1)}$ acting on  $\Hom(H^1(E_0)/p^n,H^1(C_{6,q})/p^n)$.
And the subset (which we denote by $(I\times J)^{\mu_6}$) indexing invariants under the anti-diagonal action of $\mu_6$ 
consists of pairs $(i,j)$ with $i\equiv-j\pmod6$.  

Define a bijection 
\begin{equation}\label{eq:bijection-p=-1(6)}
(I\times J)^{\mu_6}\to S:=\{1,5\}\times\{1,\dots,q-1\}    
\end{equation}
by $(i,j)\mapsto(b,a)$ where $6a-b\equiv j \pmod{6(q-1)}$ (so that
$b\equiv i\pmod6$).  Under this bijection, $(I_0\times J_1)^{\mu_6}$
corresponds to pairs $(1,a)$ where $0<a<q/6$, and
$(I_1\times J_0)^{\mu_6}$ corresponds to pairs $(5,a)$ where
$5q/6<a<q$.  (See the definitions of $I_0$, $I_1$, $J_0$ and $J_1$ in
Section~\ref{ss:prelims}.)  We thus define
$$S_0=\{(1,a) : 0<a<q/6\}
\quad\text{and}\quad
S_1=\{(5,a) : 5q/6<a<q\}.$$

The action of $\<p\>$ on $I\times J$ preserves $(I\times J)^{\mu_6}$
and so by transport of structure we get a (non-standard) action on
$S$ which we will make explicit below.
Let $O$ be the set of orbits of $\<p\>$ on $S$.  Given an orbit $o\in O$, 
define 
$$d(o):=\min\left(|o\cap S_0|,|o\cap S_1|\right).$$
Part (2) of the proposition will be a consequence of the following
``equidistribution'' result.

\begin{prop}\label{prop:equidistribution}
For every $o\in O$, $|o\cap S_0|=|o\cap S_1|$.
\end{prop}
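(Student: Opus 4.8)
The plan is to reduce the equidistribution to a balance of Frobenius slopes on $H^1(C_{6,q})$. First I rewrite the two counts in terms of the explicit $A$-module data recalled in Section~\ref{s:sha-alg}. Under the bijection~\eqref{eq:bijection-p=-1(6)}, the subsets $S_0$ and $S_1$ of $(I\times J)^{\mu_6}$ are $(I_0\times J_1)^{\mu_6}$ and $(I_1\times J_0)^{\mu_6}$; recall that $I=I_0\sqcup I_1$ with $\ord(c_i)=0$ on $I_0$ and $\ord(c_i)=1$ on $I_1$ (Equation~\eqref{eq:ordci}), and that $J=J_0\sqcup J_1$ with $\ord(d_j)=0$ on $J_0$ and $\ord(d_j)=1$ on $J_1$. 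Checking the four subsets $(I_a\times J_b)^{\mu_6}$ separately, one sees that each point $(i,j)\in(I\times J)^{\mu_6}$ contributes $\ord(d_j)-\ord(c_i)$ to $|o\cap S_0|-|o\cap S_1|$ (namely $+1$ exactly on $S_0$ and $-1$ exactly on $S_1$). Summing over the orbit $o$ therefore gives
\[
|o\cap S_0|-|o\cap S_1|=\sum_{(i,j)\in o}\ord(d_j)-\sum_{(i,j)\in o}\ord(c_i),
\]
so it suffices to show that each of these two sums equals $|o|/2$.

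For the $c$-sum: since $p\equiv-1\pmod6$, the action of $\langle p\rangle$ on $I=(\Z/6\Z)^\times=\{\pm1\}$ is negation, hence swaps $I_0$ and $I_1$. Thus the first coordinates of $(i,j),\,p\cdot(i,j),\,p^2\cdot(i,j),\dots$ alternate between $I_0$ and $I_1$; consequently $|o|$ is even and exactly $|o|/2$ points of $o$ lie in $I_1\times J$. Since $\ord(c_i)$ is $1$ on $I_1$ and $0$ on $I_0$, we get $\sum_{(i,j)\in o}\ord(c_i)=|o|/2$.

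For the $d$-sum, which is the crux: on $(I\times J)^{\mu_6}$ the coordinate $i$ is determined by $j$ (via $i\equiv-j\pmod6$), so the second-coordinate projection identifies $o$ with a single $\langle p\rangle$-orbit $\bar o\subseteq J$, and $\sum_{(i,j)\in o}\ord(d_j)=\sum_{j\in\bar o}\ord(d_j)$ with $|\bar o|=|o|$. The submodule $M_{\bar o}:=\bigoplus_{j\in\bar o}W f_j\subseteq H^1(C_{6,q})$ is stable under $F$ (because $p\cdot\bar o=\bar o$) and is in fact an $F$-stable direct summand; moreover $F^{|\bar o|}$ carries $W f_{j_0}$ into itself by multiplication by an element of valuation $\sum_{j\in\bar o}\ord(d_j)$, so the slope of the $F$-isocrystal $M_{\bar o}$ equals $\tfrac1{|\bar o|}\sum_{j\in\bar o}\ord(d_j)$. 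Now $C_{6,q}$ is supersingular when $p\equiv-1\pmod6$, so $H^1(C_{6,q})$ is isoclinic of slope $1/2$, hence so is its summand $M_{\bar o}$; therefore $\sum_{j\in\bar o}\ord(d_j)=|\bar o|/2=|o|/2$. Combining the three steps gives $|o\cap S_0|=|o\cap S_1|$, as claimed. (That $C_{6,q}$ is supersingular for $p\equiv-1\pmod6$ follows from the valuations of order-$6$ Gauss sums recorded in Sections~\ref{ss:explicit-sums}--\ref{ss:G(o)}, just as for $C_{2,q}$ and $C_{3,q}$: combining the order-$2$ and order-$3$ cases through the Gauss--Jacobi relation~\eqref{eq:GaussJacobi} shows that an order-$6$ Gauss sum over $\F_{p^m}$ has valuation $m/2$.)

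The only step of substance is the slope input: one must know that $H^1(C_{6,q})$ is isoclinic of slope $1/2$ for $p\equiv-1\pmod6$ and that this passes to the orbit-summand $M_{\bar o}$ — which is automatic, $M_{\bar o}$ being an $F$-stable direct summand. Everything else uses only the explicit description of $H^1(E_0)$ and $H^1(C_{6,q})$ as $A$-modules from Section~\ref{s:sha-alg} and the hypothesis $p\equiv-1\pmod6$.
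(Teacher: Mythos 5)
Your argument is correct, and it takes a genuinely different route from the paper's. The paper makes the $\<p\>$-action on $S$ fully explicit (via formulas for the index $a'$), passes to the subgroup $\<q\>\subset\<p\>$, and verifies equidistribution combinatorially in two cases according to $q\bmod 6$; it is elementary and self-contained. You instead rewrite
\[
|o\cap S_0|-|o\cap S_1|=\sum_{(i,j)\in o}\bigl(\ord(d_j)-\ord(c_i)\bigr),
\]
using Equation~\eqref{eq:ordci} and its analogue for $d_j$, and show each of the two sub-sums equals $|o|/2$. The $c$-sum is handled by the alternation of $I_0$ and $I_1$ under multiplication by $p\equiv-1\pmod6$ (which also forces $|o|$ even). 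The $d$-sum is the substantive step: you identify $\tfrac1{|o|}\sum_{(i,j)\in o}\ord(d_j)$ with the Newton slope of the $F$-stable orbit summand $M_{\bar o}\subseteq H^1(C_{6,q})$ --- which is correct, since over $W(\Fpbar)$ each $f_j$, $j\in\bar o$, is an eigenvector of a suitable power of $F$ with eigenvalue of valuation $\sum_{j\in\bar o}\ord(d_j)$ per $|\bar o|$ steps --- and then invoke supersingularity of $C_{6,q}$ for $p\equiv-1\pmod 6$ to pin the slope at $1/2$. This isolates the conceptual reason for equidistribution, namely balance of Frobenius slopes, which the paper's orbit bookkeeping leaves implicit.

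One point worth flagging: the paper records supersingularity for $C_{2,q}$, $C_{3,q}$ (and $E_0$) but never explicitly for $C_{6,q}$, so that fact must be supplied. Your parenthetical derivation --- combining the order-$2$ and order-$3$ Gauss sum valuations through the Gauss--Jacobi relation~\eqref{eq:GaussJacobi} together with the Jacobi-sum valuation $\ord J(o)=\tfrac12\nu|o|$ from Section~\ref{ss:J(o)} --- is correct and shows that the order-$6$ Gauss sums over $\F_{p^\mu}$ have valuation $\mu/2$, which is exactly the part of $H^1(C_{6,q})$ (indices $j\equiv\pm1\pmod6$) that your orbits actually live in. With that filled in, the argument is complete.
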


Indeed,  this proposition implies that
 $$\sum_{o\in O}d(o)=\sum_{o\in O}|o\cap S_0|=|S_0|=\lfloor q/6\rfloor.$$
On the other hand, by equations \eqref{eq:sha-Br} and \eqref{eq:Br-Hom} above and  \cite[Thm.~8.3]{UlmerBS}, recall that
  $$\dim\sha(E)=\sum_{o\in O}d(o).$$
 Hence we have  $\dim\sha(E)=\lfloor q/6 \rfloor$, so that proving Proposition~\ref{prop:equidistribution} 
 will complete the proof of  part (2) of Proposition~\ref{prop:sha-alg}.

\begin{proof}[Proof of Proposition~\ref{prop:equidistribution}]
  We begin the proof by making the action of $\<p\>$ on $S$ more
  explicit.  Suppose that $(i,j)\in (I\times J)^{\mu_6}$ corresponds to $(b,a)\in S$ through the bijection \eqref{eq:bijection-p=-1(6)} and that $p\cdot (i,j) = (pi,pj)$
  corresponds to $(b',a')$.  Then $b'=6-b$ and
  $6a'-b'\equiv p(6a-b)\mod{6(q-1)}$, so that
\begin{align*}
a'&\equiv pa-\frac{p+1}6b+1\pmod{q-1}
\equiv\begin{cases}
pa-\frac{p-5}6\pmod{q-1}&\text{if $b=1$}\\
pa-\frac{5p-1}6\pmod{q-1}&\text{if $b=5$}.
\end{cases}
\end{align*}

We now divide the proof into two cases according to $q\pmod6$.  Suppose first
that $q\equiv1\pmod6$, so that $q=p^f$ with $f$ even.  Then using the
last displayed formula, one finds that $q$ acts on $S$ by
$(b,a)\mapsto(b',a')$ where $b'=b$ and
$$a'\equiv\begin{cases}
a-\frac{q-1}6\pmod{q-1}&\text{if $b=1$}\\
a-\frac{5p-5}6\pmod{q-1}&\text{if $b=5$}.
\end{cases}$$ 
It follows that the orbits of $\<q\>$ have size exactly 6, all
elements of an orbit have the same value of $b$, and each orbit meets
either $S_0$ or $S_1$ in exactly one point and does not meet the
other.  (If the constant value of $b$ is 1, the orbit meets $S_0$ and
if it is $5$, the orbit meets $S_1$.)  The orbits of $\<p\>$ are
unions of an even number of orbits of $\<q\>$, half of them meeting
$S_0$ and half of them meeting $S_1$.  It follows that
$|o\cap S_0|=|o\cap S_1|$ for all orbits $o$ of $\<p\>$.  This
completes the proof in the case when $q\equiv1\pmod6$. 

Now assume that $q\equiv-1\pmod6$, so that $q=p^f$ with $f$ odd.  In
this case, $q$ acts on $S$ by
$(b,a)\mapsto(b',a')$ where $b'=6-b$ and
$$a'\equiv\begin{cases}
a-\frac{q-5}6\pmod{q-1}&\text{if $b=1$}\\
a-\frac{5q-1}6\pmod{q-1}&\text{if $b=5$.}
\end{cases}$$ 
Note that $q$ interchanges the subsets $S_0$ and $S_1$, so every orbit
of $\<q\>$ on $S$ meets $S_0$ and $S_1$ in the same number of points.
Since the orbits of $\<p\>$ are unions of orbits of $\<q\>$, it follows
that the orbits $o$ of $\<p\>$ satisfy $|o\cap S_0|=|o\cap S_1|$.  This
completes the proof in the case $q\equiv-1\pmod6$, and thus in general.
\end{proof}


\section{Archimedean size of $L^*(E)$ and the Brauer--Siegel ratio}\label{s:BS} 
Define the exponential differential height of $E=E_{q,r}$ by $H(E):=r^{\deg(\omega_E)}$. 
As we have seen in Section~\ref{ss:definitions}, one has $H(E)=r^{\lceil q/6\rceil}$.
Following Hindry and Pacheco \cite{HindryPacheco16}, consider the Brauer--Siegel ratio $\BS(E)$ of $E$:
$$\BS(E):=\frac{\log\left(\Reg(E)|\sha(E)|\right)}{\log H(E)}.$$
(By Theorem~\ref{thm:BSD}, $\sha(E)$ is finite so that this quantity makes sense). 
Our goal in this section is to estimate the size of the Brauer--Siegel ratio of $E_{q,r}$ for a fixed $r$ as $q\to\infty$.  Here is the
statement.

\begin{thm}\label{thm:BS}
For a fixed $r$, as $q\to\infty$ runs through powers of $p$, one has  
$$\lim_{q\to\infty} \BS(E_{q,r})=1.$$  
\end{thm}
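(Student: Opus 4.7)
\emph{Proof plan.}

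Combining the simplified BSD formula~\eqref{eq:BSD-simplified} with the definition of $\BS(E)$ yields
$$\BS(E) = \frac{\lfloor q/6\rfloor}{\lceil q/6\rceil} + \frac{\log L^*(E)}{\lceil q/6\rceil\,\log r},$$
so (for $r$ fixed) the theorem reduces to the archimedean estimate $|\log L^*(E)| = o(q)$ as $q \to \infty$.

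The explicit factorization in Theorem~\ref{thm:L-elem}, together with the Weil bound $|\gamma_o| = r^{|o|}$ where $\gamma_o := G(\pi_2(o))^{m_2(o)}G(\pi_3(o))$, gives
$$L^*(E) \;=\; \prod_{\theta_o = 1}|o| \;\cdot\; \prod_{\theta_o\neq 1}(1-\theta_o), \qquad \theta_o := \gamma_o/r^{|o|},$$
with each $|\theta_o| = 1$. A counting argument, based on the fact that all but $O(\sqrt{q}\,\log q)$ elements of $\F_q^\times$ have degree $f = \log_p q$ over $\F_p$, shows that $|O_{r,6,q}^\times| = O(q/\log q)$, $\max_o |o| = O(\log q)$, and $\sum_o\log|o| = O(q\log\log q/\log q)$. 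Bounding each factor above by $\max(2,|o|) = O(\log q)$ then produces the upper bound $\log L^*(E) = O(q\log\log q/\log q) = o(q)$.

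The matching lower bound splits according to $p\bmod 6$. When $p \equiv -1 \pmod 6$, Proposition~\ref{prop:ord-L^*}(2) asserts that $L^*(E)$ is a positive integer, whence $\log L^*(E) \geq 0$. When $p \equiv 1 \pmod 6$, the situation is more delicate: by Proposition~\ref{prop:G-power} one may write $\theta_o = \zeta_o \cdot u^{|o|\nu}$, where $\zeta_o \in \mu_6$ depends on $o$ and $u := g_2 g_3/p$ is a \emph{fixed} algebraic number of absolute value $1$, independent of both $o$ and $q$. A short check using the classical factorization $g_3^3 = p\pi$ (with $\pi\bar\pi = p$ in $\Z[\omega]$) shows that $u^2 = \pm g_3/\overline{g_3}$ is not a root of unity, and hence neither is $u$. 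Baker's theorem on linear forms in logarithms, applied uniformly over the finitely many $\zeta_o \in \mu_6$, then yields a lower bound $|1-\theta_o| \geq c\,|o|^{-\kappa}$ for constants $c, \kappa > 0$ depending only on $r$; summing gives $\log L^*(E) \geq -\kappa\sum_o\log|o| - O(|O_{r,6,q}^\times|) = -o(q)$. Combining with the upper bound completes the proof.

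The main obstacle is this last step: no elementary (Liouville-type) bound on $|1-\theta_o|$ suffices, because $\theta_o$ can a priori be arbitrarily close to a $6$th root of unity in the archimedean metric, and the polylogarithmic dependence of Baker's theorem on the exponent $|o|\nu$ is what enables us to absorb the contribution from the $\Theta(q/\log q)$ orbits.
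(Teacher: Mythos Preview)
Your argument is correct and matches the paper's: reduce via \eqref{eq:BSD-simplified} to $\log L^*(E)=o(q)$, bound above trivially using orbit-counting (the paper's Lemma~\ref{lemma:estimates}), and bound below via Baker--W\"ustholz. The paper applies Baker uniformly for both congruence classes of $p\pmod6$, whereas your observation that integrality of $L^*(E)$ (Proposition~\ref{prop:ord-L^*}(2)) already gives the lower bound when $p\equiv-1\pmod6$ is a pleasant shortcut.

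One small imprecision in the $p\equiv1\pmod6$ case: the cubic Gauss sum $g_3=G_{\F_p}(\chi_{\F_p,3}^i,\psi_1)$ depends on $i\in\{\pm1\}$ (recall that here $r\equiv1\pmod6$, so orbits are ``vertical'' and $i$ is constant on each orbit but varies between orbits), so your $u=g_2g_3/p$ takes two complex-conjugate values rather than one. Since this is still a fixed finite set independent of $q$, your Baker argument goes through with uniform constants; just apply it separately to $u$ and $\bar u$. Your check that $u$ is not a root of unity is correct (and follows more quickly from $\ord(g_3)\in\{1/3,2/3\}$ via \eqref{eq:cubic-Gauss-sum1}), though strictly speaking it is unnecessary: if $u$ were a root of unity then each $\theta_o\neq1$ would lie in a fixed finite set $\mu_N$ and $|1-\theta_o|$ would be bounded below by a constant, making Baker superfluous.
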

We will actually prove a slightly more precise estimate: namely, 
$$ \frac{\log\big(\Reg(E)|\sha(E)|\big)}{\log r} =  \frac{q}{6} \left(1+ O\left(\frac{\log\log q}{\log q}\right)\right).$$

Thus for large $q$, the product $\Reg(E)|\sha(E)|$ is of size comparable to $r^{q/6}$.
In the case when $p\equiv -1\bmod{6}$ we already know this fact, at least for large enough $r$ (see Corollary~\ref{cor:p-parts-via-L}(3)).
On the other hand, in the case when $p\equiv 1\bmod{6}$, we know from Proposition~\ref{prop:ord-L-p=1(6)}(4)  that $\Reg(E)=1$, so we deduce that $|\sha(E)|$ is ``large'' (of size comparable to $r^{q/6}$).

We saw in Equation~\eqref{eq:BSD-simplified} that
$$L^*(E)=\frac{\Reg(E)|\sha(E)|}{H(E)r^{-1}}
=\frac{\Reg(E)|\sha(E)|}{r^{\lfloor q/6\rfloor}},$$
so, given the definition of $\BS(E)$, the above theorem will be an immediate consequence of the following one, which is the main result of this section.

\begin{thm}\label{thm:L*-estimate}
For a fixed $r$, as $q\to\infty$ runs through powers of $p$, one has  
$$\lim_{q\to\infty} \frac{\log L^*(E_{q,r})}{q}=0.$$    
\end{thm}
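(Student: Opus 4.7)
The plan is to start from the elementary factorisation of Theorem~\ref{thm:L-elem},
$$L(E,T) = \prod_{o \in O_{r,6,q}^\times}\bigl(1 - \gamma_o T^{|o|}\bigr),$$
in which $\gamma_o := G(\pi_2(o))^{m_2(o)}G(\pi_3(o))$ satisfies $|\gamma_o| = r^{|o|}$ in every complex embedding. Setting $\beta_o := \gamma_o/r^{|o|}$, a point on the unit circle lying in the cyclotomic field $\Q(\mu_{6p})$, and isolating the factors that vanish at $T=1/r$, one obtains
$$\log|L^*(E)| \;=\; \sum_{o:\, \beta_o = 1}\log|o| \;+\; \sum_{o:\, \beta_o \neq 1}\log|1-\beta_o|,$$
and the task reduces to showing that both sums are $o(q)$.

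First I would analyse the orbit structure. From the size formula in Section~\ref{ss:orbits}, with $r = p^\nu$ fixed and $q = p^f \to \infty$, every orbit has $|o| \le \lcm(2, f) = O(\log q)$; moreover, all but $O(\sqrt q)$ elements of $\Fq^\times$ generate $\Fq$ over $\Fp$ and hence produce orbits of size comparable to $f$. It follows that the total number of orbits satisfies $N := |O_{r,6,q}^\times| = O(q/\log q)$. Consequently the first sum is bounded by $N \cdot \log(\max_o |o|) = O(q\log\log q/\log q) = o(q)$.

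Next I would exploit the explicit decomposition of Proposition~\ref{prop:G-power}, which yields
$$\beta_o \;=\; \zeta_o \cdot \eta_2^{|o|\nu}\cdot \eta_3^{|o|\nu/c_3}$$
where $c_j := \ord^\times(p \bmod j)$, $\eta_j := g_j/p^{c_j/2}$ is a fixed unit-modulus element (in terms of the Weil integers $g_j$ of that proposition), and $\zeta_o \in \mu_6$ depends on $(i,\alpha) \in o$. Thus the $\beta_o$'s lie in a fixed set $V \subset \Q(\mu_{6p})$ of cardinality $|V| = O(\log q)$, and a standard character-sum/equidistribution argument (applied to $\chi_{\F,2}(\alpha),\chi_{\F,3}(\alpha)$ as $\alpha$ varies in $\Fq^\times$) shows that $N_v := \#\{o : \beta_o = v\}$ is approximately $N/|V|$ uniformly in $v \in V$. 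When $p \equiv -1 \pmod{6}$, both $\eta_2$ and $\eta_3$ are themselves roots of unity (by \eqref{eq:quad-Gauss-sum} and \eqref{eq:cubic-Gauss-sum2}), so $V \subset \mu_{12}$ is finite, $|\log|1-v|| = O(1)$ for each $v \in V\setminus\{1\}$, and the sum is trivially $O(N) = o(q)$. When $p \equiv 1 \pmod{6}$, $\eta_3$ is a cubic Gauss sum of size $p^{1/2}$ and is generically not a root of unity, so $V$ consists of $\Theta(\log q)$ distinct points on the unit circle; the crucial additional input will be an equidistribution estimate giving $\sum_{v \in V,\, v \neq 1}\log|1-v| = o(|V|)$, which combined with the uniform bound on $N_v$ produces $\sum_o \log|1-\beta_o| = \sum_v N_v\log|1-v| = o(N) = o(q)$.

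I expect the main obstacle to be precisely this last equidistribution estimate in the case $p \equiv 1 \pmod{6}$. Since $\eta_3$ is generically not a root of unity, showing that the values $\eta_3^{k\nu}$ (for $k$ ranging over the admissible divisors of $f$) stay quantitatively far from each sixth root of unity will require a Baker--W\"ustholz lower bound on linear forms in logarithms of algebraic numbers, of the form $|1 - \zeta\eta_3^{k\nu}| \gg k^{-\kappa}$ for some $\kappa = \kappa(p)$. Combined with a discrepancy estimate for the arguments $\{k\arg(\eta_3^\nu)/(2\pi)\}$, this should deliver the $o(|V|)$ error term with the uniformity in $q$ needed to conclude.
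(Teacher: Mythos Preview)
Your overall architecture is right: factorise $L(E,T)$ via Theorem~\ref{thm:L-elem}, split the special value as
\[
\log L^*(E)=\sum_{\beta_o=1}\log|o|+\sum_{\beta_o\neq1}\log|1-\beta_o|,
\]
control the first sum by the orbit-counting estimates (your $N=O(q/\log q)$ and $\max|o|=O(\log q)$ are exactly Lemma~\ref{lemma:estimates}), and invoke Baker--W\"ustholz for the second. That is precisely what the paper does.

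Where you diverge is in the treatment of the second sum. You propose to group orbits by the common value $v=\beta_o$, argue that the multiplicities $N_v$ are approximately uniform over a set $V$ of size $O(\log q)$, and then combine this with a discrepancy estimate for the arguments of the $\eta_3^{k\nu}$ to get $\sum_{v\neq1}\log|1-v|=o(|V|)$. This detour is both unnecessary and problematic. The uniformity claim $N_v\approx N/|V|$ is false in general: almost all $\alpha\in\Fq^\times$ generate $\Fq$ over $\Frr$, so almost all orbits share one or two sizes, and the corresponding handful of values $v$ carry essentially all of the mass while the values of $v$ attached to small orbit sizes have $N_v=O(1)$. Thus the ``standard character-sum/equidistribution argument'' you invoke does not deliver what you need, and the subsequent step $\sum_v N_v\log|1-v|=o(N)$ does not follow from $\sum_v\log|1-v|=o(|V|)$.

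The paper avoids this entirely by applying Baker--W\"ustholz \emph{orbit by orbit}. Using Proposition~\ref{prop:G-power} one writes $\beta_o=\zeta_o(g_op^{-2})^{|o|\nu/2}$ with $\zeta_o\in\mu_6$ and $g_o$ a fixed Weil integer of size $p^2$ in $\Q(\mu_{6p})$; then the Baker--W\"ustholz theorem gives, for each $o$ with $\beta_o\neq1$,
\[
\log|1-\beta_o|\ \ge\ -c_2-c_1\log|o|
\]
with constants depending only on $p$ and $r$ (this is Corollary~\ref{cor:BaWu-application}). Summing over orbits and using $\sum_o 1\ll q/\log q$ and $\sum_o\log|o|\ll q\log\log q/\log q$ from Lemma~\ref{lemma:estimates} yields $\sum_{\beta_o\neq1}\log|1-\beta_o|\gg -q\log\log q/\log q$, which is $o(q)$. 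No grouping, no equidistribution of $N_v$, and no discrepancy estimate is needed; the case distinction on $p\bmod6$ also disappears. Your sketch already contains every ingredient required for this direct argument---simply drop the equidistribution layer and sum the per-orbit Baker--W\"ustholz bound.
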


To prove this we estimate $\log L^\ast(E_{q,r})$ from above and from below. While the upper bound is relatively straightforward, proving the required lower bound is more demanding.
Before proving the theorem at the end of this section, we first collect various intermediate results in the next few
subsections.

\subsection{Explicit special value}
Recall from Theorem~\ref{thm:L-elem} that
$$L(E,s)=\prod_{o\in O_{r,6,q}^\times}
\left(1-G(\pi_2(o))^{m_2(o)}G(\pi_3(o))r^{-s|o|}\right)$$ 
where $O_{r,6,q}^\times$ denotes the set of orbits of $\<r\>$  acting on
$(\Z/6\Z)^\times\times\Fq^\times$.  To lighten notation we write
$$\omega(o):=G(\pi_2(o))^{m_2(o)}G(\pi_3(o))$$
for the remainder of the article. Note that $\omega(o)$ is a Weil integer of size $p^{\nu|o|}=r^{|o|}$, 
where a ``Weil integer of size $p^{c}$'' is an algebraic
integer whose absolute value in every complex embedding is $p^{c}$.

We partition $O^\times:=O_{r,6,q}^\times$ as
$O^\times=O^\times_1\cup O^\times_2$ where $O^\times_1$ consists of
those orbits $o$ such that $\omega(o)=r^{|o|}$.  
Thus the orbits in $O^\times_1$ are the ones contributing zeroes to the $L$-function. 
In particular, we have $|O^\times_1|=\rk E(K)$ by our BSD result (Theorem~\ref{thm:BSD}).
From the definition of special value (see Section~\ref{ss:BSD-notations}), it is a simple exercise to see that
\begin{equation}\label{eq:L*}
L^*(E)=\prod_{o\in O^\times_1}|o|
\prod_{o\in O^\times_2}\left(1-\frac{\omega(o)}{r^{|o|}}\right).  
\end{equation}

\subsection{Estimates for orbits}
Let us gather here a few estimates to be used below.  Although we only
need the case $n=6$ in this paper, we work in more generality for
future use.

\begin{lemma}\label{lemma:estimates}
  Let $p$ be a prime number, let $q$ and $r$ be powers of $p$, and let
  $n$ be an integer prime to $p$.  Let
  $S^\times=(\Z/n\Z)^\times\times\Fqtimes$ and let $O^\times$ denote the
  set of orbits of $\<r\>$ on $S^\times$.  Then
 \begin{enumerate} 
 \item $\sum_{o\in O^\times} |o| = |S^\times|=\phi(n)(q-1)$,
 \item $\sum_{o\in O^\times} 1 = |O^\times| \ll q/\log q$,
 \item $\sum_{o\in O^\times} \log |o| \ll q \log\log q / \log q$.
 \end{enumerate}	
 The implied constants depend only on $r$ and $n$.
\end{lemma}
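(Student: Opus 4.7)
Part (1) is immediate: the orbits of $\<r\>$ partition $S^\times$, which has cardinality $\phi(n)(q-1)$.

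For part (2), my plan is first to reduce to bounding $|O_{r,q}|$, the set of orbits of $\<r\>$ on $\F_q^\times$, and then to estimate the latter by separating primitive from non-primitive elements. The natural projection $O^\times_{r,n,q}\to O_{r,q}$ on the second coordinate has fibers of cardinality at most $\phi(n)$: indeed, all elements of the preimage $(\Z/n\Z)^\times\times\overline{o}$ of a given $\overline{o}\in O_{r,q}$ have $\<r\>$-orbits of identical size $\lcm(\ord^\times(r\bmod n),|\overline{o}|)$, so the fiber size equals $\phi(n)\,|\overline{o}|/\lcm(\ord^\times(r\bmod n),|\overline{o}|)\le \phi(n)$. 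Hence $|O^\times_{r,n,q}|\le \phi(n)\,|O_{r,q}|$, and it suffices to bound $|O_{r,q}|$. Writing $q=p^f$ and $r=p^\nu$, every $\alpha\in\F_q^\times$ has $\<r\>$-orbit of size $d(\alpha)=[\F_r(\alpha):\F_r]$, so $|O_{r,q}|=\sum_{\alpha\in\F_q^\times}1/d(\alpha)$. I would then split $\F_q^\times$ according to whether $\F_p(\alpha)=\F_q$ (``primitive'') or not. A primitive $\alpha$ satisfies $d(\alpha)=f/\gcd(f,\nu)\ge f/\nu$, contributing at most $\nu(q-1)/f$ orbits, while the number of non-primitive elements is bounded by $\sum_{e\mid f,\,e<f}p^e\ll \sqrt{q}\log q$, which is negligible. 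Thus $|O_{r,q}|\le \nu q/f+O(\sqrt{q}\log q)\ll q/\log q$, with implied constant depending on $\nu$, hence on $r$.

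For part (3), the main tool will be the concavity of $\log$ applied to the orbit sizes. Setting $N:=|O^\times|$ and $M:=\phi(n)(q-1)=\sum_{o}|o|$, Jensen's inequality gives $\sum_{o}\log|o|\le N\log(M/N)$. Part (2) provides $N\le Cq/\log q$ for some constant $C=C(r,n)$, and for $q$ large enough this lies to the left of the maximum $M/e$ of the function $x\mapsto x\log(M/x)$. Since this function is increasing on $(0,M/e)$, the bound is maximized at the extreme $N=Cq/\log q$, which yields
\[\sum_o\log|o|\le \frac{Cq}{\log q}\,\big(\log\log q+O(1)\big)\ll \frac{q\log\log q}{\log q},\]
as desired.

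The only substantive work lies in part (2), namely verifying that primitive elements dominate the count and that their $\<r\>$-orbits are sufficiently large; parts (1) and (3) are then direct consequences of elementary counting and concavity, respectively.
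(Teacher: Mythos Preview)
Your proof is correct, and parts (2) and (3) take genuinely different routes from the paper.

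For (2), the paper uses a threshold argument: it fixes a parameter $x$, bounds the number of orbits with $|o|>x$ by $|S^\times|/x$, bounds the number of orbits with $|o|\le x$ by the number of $\alpha\in\Fpbar$ of degree $\le x$ over $\Frr$ (appealing to the Prime Number Theorem for $\Frr[t]$), and optimizes by taking $x=\log q/\log r$. Your argument instead reduces to $|O_{r,q}|$ via the fiber bound and then splits $\Fqtimes$ into primitive and non-primitive elements over $\Fp$; the primitive ones all have orbit size $\ge f/\nu$, and the non-primitive ones are at most $O(\sqrt{q}\log q)$ in number. Your approach is slightly more elementary in that it avoids any appeal to the prime-counting estimate for $\Frr[t]$, replacing it with the crude subfield bound.

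For (3), the paper again runs a threshold argument with a parameter $y=\log q$, bounding $\sum_{|o|\le y}\log|o|\le |O^\times|\log y$ and $\sum_{|o|>y}\log|o|\le (\log y/y)|S^\times|$ using the monotonicity of $(\log x)/x$. Your Jensen argument is cleaner: once (1) and (2) are known, the bound $\sum_o\log|o|\le N\log(M/N)$ with $N=|O^\times|$ and $M=|S^\times|$, combined with the monotonicity of $x\mapsto x\log(M/x)$ on $(0,M/e)$, gives the result in one line. The paper's threshold method has the minor advantage of being self-contained (not requiring the monotonicity check for $x\log(M/x)$), but your route makes the dependence of (3) on (2) more transparent.
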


\begin{proof} 
  By general properties of group actions, $S^\times$ decomposes as
  the disjoint union of orbits $o\in O^\times$; this yields (i).
  To prove (ii), we study ``long'' orbits and ``short'' orbits separately.
  Let $x\geq 1$ be a parameter to be chosen later.  Then
    \[
    \left|\big\{ o \in O^\times : |o|> x \big\}\right|
    = \sum_{\substack{ o\in O^\times\\ |o|> x}} 1 
    \leq \sum_{\substack{ o\in O^\times \\ |o|> x}} \frac{|o|}{x}
    \leq \frac{1}{x}\sum_{o\in O^\times} |o| = \frac{|S^\times|}{x}.
    \]
    Let $o\in O^\times$ be the orbit through $(i, \alpha)$. As was
    noted in Section~\ref{ss:orbits}, $|o|\geq [\Frr(\alpha):\Frr]$.
    In particular
    $\left|\big\{ o\in O^\times: |o|\leq x \big\}\right|$ is at most
    $\left|\big\{\alpha\in\Fpbar : [\Frr(\alpha):\Frr]\leq
      x\big\}\right|$.  An element $\alpha\in\Fpbar$ has degree
    $\leq x$ over $\Frr$ if and only if its monic minimal polynomial
    has degree $\leq x$.  The Prime Number Theorem for $\Frr[t]$
    implies that there are at most $c_r r^x/x$ monic irreducible
    polynomials of degree $\leq x$ in $\Frr[t]$ (see
    \cite[Thm.~2.2]{RosenNTFF}) for some constant $c_r>0$ depending at
    most on~$r$.  This argument yields that
    $\left|\big\{ o\in O^\times : |o|\leq x \big\}\right|\leq c_r
    r^x/x$.  Adding the two contributions, and choosing
    $x= \log q/\log r$, we find that $|O^\times| \leq c' q/\log q$
    where $c'$ depends only on $r$ and $n$.

    Let us finally turn to the proof of (iii): given a parameter
    $y\geq 1$, we have
 \begin{align*}
 \sum_{o\in O^\times}\log|o|
 &=\sum_{|o|\leq y} \log|o| + \sum_{|o|> y} \log|o|
 \leq \log y \sum_{|o|\leq y} 1 + \sum_{|o|> y} \frac{\log|o|}{|o|} |o| \\
 &\leq \log y  \sum_{o\in O^\times} 1  + \frac{\log y}{y}\sum_{|o|> y}  |o| 
 \leq \log y  |O^\times| + \frac{\log y}{y} |S^\times|,
 \end{align*}
 because $x\mapsto (\log x)/x$ is decreasing on $(e, \infty)$.  Upon
 using (ii) and choosing $y=\log q$, one finds that
 $\sum_{o\in O^\times}\log|o|\leq c'' q\log\log q/\log q$, where $c''$
 depends only on $r$ and $n$.  This is the desired estimate.
\end{proof}

\subsection{Linear forms in logarithms} 
For the convenience of the reader, we quote a special case of the main
result of \cite{BakerWustholz93} about $\Z$-linear forms in logarithms of algebraic numbers.   Choose once and for all an
embedding $\Qbar\hookrightarrow\C$ 
and fix the branch of the complex logarithm $\log:\C\to\C$  
with the imaginary part of $\log z$ in $(-\pi,\pi]$ for all $z\in\C$.
In particular, if $|z|=1$, then $|\log(z)|\le\pi$ and $\log(-1)=i\pi$.
%
Define the modified height $\hght'_F$ as
follows: For a number field $F$ and $\alpha\in F$, put
\[\hght_F'(\alpha) 
:= \frac{1}{[F:\Q]}\max\left\{\hght_F(\alpha), |\log\alpha|,
  1\right\},\]
where $\hght_F(\alpha)$ denotes the usual logarithmic Weil height of
$\alpha$ (relative to $F$), see \cite[B.2]{HindrySilvermanDG}. 

Let $\alpha_1, \alpha_2$ be two algebraic numbers (not $0$ or $1$) and
denote by $\log\alpha_1$, $\log\alpha_2$ their logarithms.  Let
$F\subset\Qbar$ be the number field generated by $\alpha_1, \alpha_2$
over $\Q$, and let $d:=[F:\Q]$.  Let $B=(b_1,b_2)$ with
$b_1, b_2\in\Z$ not both zero and set
$\hght'(B) := \max\{\hght_\Q(b_1:b_2), 1\},$ where $\hght_\Q$ here
denotes the logarithmic Weil height on $\P^1_\Q$ (relative to
$\Q$). Note that $\hght'(B)\leq \log\max\{|b_1|, |b_2|,e\}$.

With notation as above, let
$\Lambda:=b_1\log\alpha_1+b_2\log\alpha_2\in\C$.  
Then the Baker--W\"ustholz theorem states that either $\Lambda=0$ or
\begin{equation}\label{eq:BW}
\log|\Lambda|> - c_d\hght'_F(\alpha_1)\hght'_F(\alpha_2)\hght'(B)
\end{equation}
where $c_d>0$ is an explicit constant depending only on $d$.

We make use of the Baker--W\"ustholz theorem to prove the following: 
 
\begin{thm} 
Let $p$ be an odd prime number.
Let $z\in\Qbar$ be a Weil integer of size $p^a$, and let
$\zeta\in\Qbar$ be a root of unity.  For any integer $L\neq 0$, either
$\zeta (z p^{-a})^L =1$ or   
 \begin{equation}
 \log\big|1- \zeta(zp^{-a})^L\big| \geq - c_0 - c_1\log|L|,
\end{equation}
 for some effective constants $c_0, c_1>0$ depending at most on $p$,
 $a$, the degree of $z$ over $\Q$, and the order of $\zeta$.  
 \end{thm}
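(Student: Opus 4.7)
The plan is to reduce the estimate to a single application of the Baker--W\"ustholz theorem \eqref{eq:BW} to a two-term $\Z$-linear combination of logarithms. Set $u := \zeta(zp^{-a})^L$. Since $z$ is a Weil integer of size $p^a$, the number $zp^{-a}$ has modulus $1$ in every complex embedding, so $|u|=1$. Writing $u=e^{i\theta}$ with $\theta = -i\log u \in (-\pi,\pi]$, the elementary identity $|1-u| = 2|\sin(\theta/2)| \geq (2/\pi)|\theta|$ shows that it suffices to prove $\log|\log u|\geq -c_0 - c_1\log|L|$ whenever $u\neq 1$.

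I would then express $\log u$ as a $\Z$-linear form in logarithms of two fixed algebraic numbers. Let $N$ denote the order of $\zeta$, set $N':=\lcm(N,2)$, fix $\xi := e^{2\pi i/N'}$, and write $\zeta = \xi^s$ for some integer $s$. Taking $\alpha_1:=zp^{-a}$ and $\alpha_2:=\xi$, and setting $\lambda_k := \log\alpha_k$ (principal branch), the identity $u = \exp(L\lambda_1 + s\lambda_2)$ combined with $N'\lambda_2 = 2\pi i$ gives
\[
\Lambda := \log u = L\lambda_1 + (s - mN')\lambda_2 =: b_1\lambda_1 + b_2\lambda_2
\]
for the unique $m\in\Z$ putting $\Lambda$ in the principal branch. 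The reason for introducing the auxiliary root of unity $\xi$ with $N' = \lcm(N,2)$ is precisely so that both $\zeta$ and the branch correction $e^{2\pi i}$ become integer powers of a single $\alpha_2$. The constraints $|\Lambda|\leq\pi$, $|\lambda_1|\leq\pi$, $|\lambda_2|=2\pi/N'$, together with the triangle inequality, force $|b_2|\leq N'(1+|L|)/2$; hence $b_1, b_2$ are integers bounded linearly in $|L|$, and $\hght'(B) \ll_{N'} \log\max(|L|,e)$.

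To invoke \eqref{eq:BW}, take $F := \Q(z,\xi)$, so that $[F:\Q] \leq \deg(z)\cdot\phi(N')$ depends only on $\deg(z)$ and the order of $\zeta$. The modified heights $\hght'_F(\alpha_1), \hght'_F(\alpha_2)$ are bounded in terms of $p, a, \deg(z)$ and $N'$: indeed $\hght_F(\xi)=0$, and the product formula together with the fact that $z$ is an algebraic integer of Weil size $p^a$ gives $\hght_F(z)=a\log p$, whence $\hght_F(zp^{-a})\leq 2a\log p$. Since $u\neq 1$ is equivalent to $\Lambda\neq 0$, the Baker--W\"ustholz inequality then yields
\[
\log|\Lambda| \geq -c_{[F:\Q]}\,\hght'_F(\alpha_1)\,\hght'_F(\alpha_2)\,\hght'(B) \geq -c_0 - c_1\log|L|
\]
with constants depending only on the advertised parameters. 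Combined with the first paragraph's trigonometric estimate, this establishes the claim. The principal technical care lies in the middle step---choosing $\xi$, tracking the branch correction, and verifying the linear growth of $b_2$ in $|L|$; once the linear form in logarithms is correctly set up, Baker--W\"ustholz applies essentially directly.
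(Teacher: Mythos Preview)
Your proof is correct and follows essentially the same route as the paper: the trigonometric reduction $|1-e^{i\theta}|\geq(2/\pi)|\theta|$, followed by Baker--W\"ustholz applied to a two-term integer linear form in $\log(zp^{-a})$ and the logarithm of a root of unity, with coefficients bounded linearly in $|L|$. The only difference is bookkeeping in the choice of that root of unity: the paper takes $\alpha_1=-1$ and scales the form by $n$ (the order of $\zeta$) to get integer coefficients $\Lambda = 2(k-mn)\log(-1) + nL\log(zp^{-a})$, whereas you use a primitive $N'$-th root $\xi$ with $N'=\lcm(N,2)$; the paper's choice keeps $F=\Q(\zeta,z)$ a hair smaller, but the two setups are otherwise equivalent.
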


\begin{proof}
  Let $F:=\Q(\zeta, z )$ be the number field generated by $\zeta$ and
  $z$ (viewed as a subfield of $\Qbar$), and $d$ be its degree over
  $\Q$.  We begin by estimating the modified height of
  $z p^{-a}$. 
  By assumption $z$ is a Weil integer of size $p^a$.  Straightforward
  estimates imply that the absolute logarithmic Weil height of
  $z p^{-a}$ is at most $\log p^{a}$. 
Therefore, 
\[\hght'_F(zp^{-a}) 
\leq \max\left\{ \log p^{a}, \frac{|\log (zp^{-a})|}{d}, \frac{1}{d}\right\}
\leq \max\left\{ \log p^{a}, \frac{\pi}{d}\right\}, \]
We have used here that $|zp^{-a}|=1$ in the chosen complex
embedding.  
 
For all $|x|\leq\pi/2$, we have $|\sin x|\geq \frac{2}{\pi} |x|$
and thus, for all $|\theta|\leq\pi$, we have
$$\big|1-\ee^{i \theta}\big| 
= 2\left|\sin \frac{\theta}{2}\right| 
\geq \frac{2}{\pi} |\theta|.$$
If $0 < |\theta|<\pi$, this leads to
$\log\big|1-\ee^{i \theta}\big| \geq \log(2/\pi)+\log|\theta|$.
 	
In the given complex embedding $F\subset\Qbar\hookrightarrow\C$, one
can write $\zeta = \ee^{2\pi ik/n}$ for some $n\in \Z_{\geq 1}$ and
$k\in\{1, \dots, n-1\}$ coprime to $n$ (so that $\zeta$ is a primitive
$n$th root of unity).  There is also a unique angle
$\phi\in(-\pi,\pi]$ such that $zp^{-a}=\ee^{i\phi}$.  Let $L\neq 0$ be
an integer.  To prove the theorem, we may assume that
$\zeta(zp^{-a})^{L}\neq 1$.  Write
$$\zeta(zp^{-a})^{L}=\ee^{i(2\pi k/n+L\phi)}=\ee^{i\tilde\theta}$$
where $\tilde\theta\in(-\pi,\pi]$, and let $m$ be the integer such
that $2\pi k/n+L\phi=2\pi m+\tilde\theta$.  Note that $|m|\le(|L|+3)/2$.
The trigonometric considerations above show that
\begin{align*}
\log\big|1 -\zeta(zp^{-a})^L\big|
 &= \log\big|1 -\ee^{i\tilde\theta}\big|\\
 &\geq \log(2/\pi) + \log|\tilde\theta|\\
&=\log(2/\pi) + \log|2\pi k/n+L\phi-2\pi m|\\
&=\log(2/(n\pi))+\log|2\pi(k-nm)+Ln\phi|.
\end{align*}

Let us now consider 
the $\Z$-linear combination of logarithms of algebraic numbers
$$\Lambda := b_1\log(-1) + b_2\log(zp^{-a}),$$
where $B=(b_1,b_2) := (2(k-mn), nL)\neq(0,0)$.
Note that $\log(-1)=i\pi$ and $\log(zp^{-a}) = i\phi$, so that
$\Lambda = i(2\pi(k-nm)+Ln\phi)$.
By assumption, $\Lambda\neq0$ so the Baker--W\"ustholz theorem \eqref{eq:BW}
yields that 
\[\log|\Lambda| \geq -c_d \hght'_F(-1)\hght'_F(zp^{-a})\hght'(B).\]
As was shown above, 
$$\hght'_F(zp^{-a})\leq \max\{\log p^{a},\pi/d\},$$
and one can easily see that $\hght'_F(-1)=\pi/d$. 
Also,  $\hght'(B)\leq \log\max\{|b_1|, |b_2|,\ee\}$, where
$$|b_1|=|2(k-mn)|\le 2n(1+|m|)\le(3+|L|)\le 3n|L|,$$
and $|b_2|=n|L|$, so that $\hght'(B)\le\log(3n|L|)$.

Putting  these estimates together, we arrive at
\begin{align*}
\log\big|1 -\zeta  (z p^{-a})^L\big| 
&\geq \log \frac{2}{n\pi} -c_d \frac\pi d\max\left\{\log
  p^a,\frac{\pi}{d} \right\}\log(3n|L|)\\
&\geq -c_0-c_1\log |L|
\end{align*}
where $c_0$ and $c_1$ are certain positive constants depending only on $p$, $a$, $n$, and $d$.  This
completes the proof of the theorem.
 \end{proof}

We now apply this result to the  situation at hand.
For any orbit $o\in O^\times_{r,6,q}$, we deduce from Proposition~\ref{prop:G-power} that we can write $G(\pi_2(o)) = \zeta_2 g_2^{|\pi_2(o)|\nu}$ where $\zeta_2=\pm1$, and $g_2\in\Q(\mu_{2p})$ is a Weil integer of size $p^{1/2}$.
Similarly, letting $c$ be the order of $p$ modulo $3$, Proposition~\ref{prop:G-power} implies that $G(\pi_3(o)) = \zeta_3 g_3^{|\pi_3(o)|\nu/c}$ where
$\zeta_3$ is a $3$rd root of unity and $g_3\in\Q(\mu_{3p})$ is a Weil integer of size $p^{c/2}$. 
Since $m_2(o)|\pi_2(o)|=|o|$ and $|\pi_3(o)| = |o|$, and since $c\in\{1,2\}$, we find that 
$$\omega(o) = \zeta_2^{m_2(o)}\zeta_3  (g_2^2g_3^{2/c})^{|o|\nu/2}.$$ 
For any orbit $o\in O^\times$, it follows that $\omega(o)$ is of the form $\omega(o)=\zeta_o g_o^{|o|\nu/2}$ where $\zeta_o = \zeta_2^{m_2(o)}\zeta_3$ is a 6-th root of unity and
$g_o=g_2^2g_3^{2/c}\in\Q(\mu_{6p})$ is a Weil integer of size $p^2$.

Using the previous theorem for $\zeta=\zeta_o$, $z=g_o$ (with $a=2$) and $L= |o|\nu/2$, and setting $c_2=c_0+c_1\log(\nu/2)$, 
one obtains the following corollary: 

\begin{cor}\label{cor:BaWu-application}
For any orbit $o\in O_{r,6,q}^\times$, either $\omega(o)/r^{|o|}=1$  \textup{(}i.e., $o\in O^\times_1$\textup{)} or
$$\log\left|1-\frac{\omega(o)}{r^{|o|}}\right|
\geq -c_2-c_1\log|o|.$$
\end{cor}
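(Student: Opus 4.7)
The plan is to directly apply the preceding theorem (the linear-forms-in-logarithms estimate for expressions of the form $1-\zeta(zp^{-a})^L$) to the decomposition of $\omega(o)$ derived in the paragraph immediately preceding the corollary. All of the heavy lifting has in fact been done: Proposition~\ref{prop:G-power} decomposes each Gauss sum $G(\pi_n(o))$ as a root of unity times a power of a fixed Weil integer, and combining the $n=2$ and $n=3$ pieces gives an expression $\omega(o)=\zeta_o\, g_o^{|o|\nu/2}$ with $\zeta_o$ a $6$th root of unity and $g_o\in\Q(\mu_{6p})$ a Weil integer of size $p^2$, independent of $o$.

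The substitution to make is $\zeta=\zeta_o$, $z=g_o$, $a=2$, and $L=|o|\nu/2$. With these choices,
\[
\zeta\bigl(zp^{-a}\bigr)^{L}
=\zeta_o\bigl(g_o p^{-2}\bigr)^{|o|\nu/2}
=\frac{\zeta_o g_o^{|o|\nu/2}}{p^{|o|\nu}}
=\frac{\omega(o)}{r^{|o|}},
\]
so the condition $\zeta(zp^{-a})^L=1$ in the theorem translates precisely to $\omega(o)/r^{|o|}=1$, i.e., to $o\in O_1^\times$. Assuming instead that $o\in O_2^\times$, the theorem yields the lower bound
\[
\log\!\left|1-\frac{\omega(o)}{r^{|o|}}\right|
\;\geq\; -c_0-c_1\log|L|
\;=\; -c_0-c_1\log(|o|\nu/2).
\]

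It remains only to absorb the $\nu$-dependent term into the constants. Writing $\log(|o|\nu/2)=\log|o|+\log(\nu/2)$ and setting $c_2:=c_0+c_1\log(\nu/2)$ (which, like $c_0$ and $c_1$, depends only on $p$, $a=2$, the degree of $g_o$ over $\Q$, the order of $\zeta_o$, and $\nu$, all of which are bounded purely in terms of $r$) gives the claimed inequality $\log|1-\omega(o)/r^{|o|}|\geq -c_2-c_1\log|o|$.

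The only real issue to be careful about is the verification that the constants $c_0,c_1$ supplied by the theorem genuinely depend only on bounded data and not on $o$: the degree of $g_o$ over $\Q$ is at most $[\Q(\mu_{6p}):\Q]$, and the order of $\zeta_o$ divides $6$, both uniform in $o$ and $q$. With that checked, the corollary follows immediately, with no further work required; the main obstacle was already overcome in setting up the factorization of $\omega(o)$ through a single Weil integer $g_o$.
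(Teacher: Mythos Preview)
Your proof is correct and follows exactly the approach taken in the paper: apply the linear-forms-in-logarithms theorem with $\zeta=\zeta_o$, $z=g_o$, $a=2$, $L=|o|\nu/2$, and absorb $\log(\nu/2)$ into the constant to define $c_2$. One minor imprecision: $g_o$ is not literally independent of $o$ (for $n=3$ there are two possible Gauss sums depending on $i\bmod 3$), but since $g_o$ ranges over a finite set contained in $\Q(\mu_{6p})$ and $\zeta_o$ has order dividing $6$, the constants $c_0,c_1$ can be taken uniformly, which is the point you correctly make at the end.
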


\subsection{Proof of Theorem~\ref{thm:L*-estimate}}
Recall that the theorem asserts that
$$\lim_{q\to\infty}\frac{\log L^*(E_{q,r})}q=0.$$
We saw in Equation~\eqref{eq:L*} that
$$L^*(E_{q,r})=\prod_{o\in O^\times_1}|o|
\prod_{o\in O^\times_2}\left(1-\frac{\omega(o)}{r^{|o|}}\right)$$
where $O^\times_1\subset O^\times_{r,6,q}$
consists of those orbits $o$ such that $\omega(o)=r^{|o|}$ and $O^\times_2 = O^\times_{r,6,q}\smallsetminus O_1^\times$. 

It is clear that $|1-\omega(o)/r^{|o|}|\le2$ for all $o\in O^\times$. 
We can thus bound $\log L^*(E)$ from above as
follows:
\begin{align*}
\log L^*(E_{q,r})
&=\log\left(  \prod_{o\in O^\times_1}|o|\prod_{o\in O^\times_2}
   \left(1-\frac{\omega(o)}{r^{|o|}}\right)\right)
\le \sum_{o\in O^\times_1}\log|o|+\sum_{o\in O^\times_2}\log2\\
&\ll \frac{q\log\log q}{\log q}+\frac q{\log q}\log 2 
\ll \frac{q\log\log q}{\log q} 
\end{align*}
where we made use of Lemma~\ref{lemma:estimates} in the  last step.  Thus
$$\limsup_{q\to\infty}\frac{\log L^*(E_{q,r})}q\ll
\limsup_{q\to\infty}\left(\frac{\log\log q}{\log q}\right)=0.$$

We now turn to a lower bound.  We obtain from
Corollary~\ref{cor:BaWu-application} that
\begin{align*}
\log L^*(E_{q,r})
&=\log\left(  \prod_{o\in O^\times_1}|o|\prod_{o\in O^\times_2}
   \left(1-\frac{\omega(o)}{r^{|o|}}\right)\right)
\ge \sum_{o\in O^\times_1}\log|o|
  +\sum_{o\in O^\times_2}(-c_2-c_1\log|o|)\\
&\gg-\frac q{\log q}-\frac{q\log\log q}{\log q} 
\gg-\frac{q\log\log q}{\log q} 
\end{align*}
 using Lemma~\ref{lemma:estimates} again for the penultimate inequality.  Therefore
$$\liminf_{q\to\infty}\frac{\log L^*(E_{q,r})}q\gg
\liminf_{q\to\infty}\left(-\frac{\log\log q}{\log q}\right)=0.$$

Combining the upper and lower bounds, we finally obtain that
$$\lim_{q\to\infty}\frac{\log L^*(E_{q,r})}{q}=0,$$
and this completes the proof of Theorem~\ref{thm:L*-estimate}.
\qed

As a direct consequence of Corollary~\ref{cor:p-parts-via-L}(1) and Theorem~\ref{thm:BS}, we obtain the following.
\begin{cor}\label{cor:BS-p=1(6)} 
Assume that $p\equiv 1\pmod 6$. Then, as $q\to\infty$, we have $|\sha(E)[p^\infty]|=1$ and
$$ |\sha(E)| \geq H(E)^{1+o(1)} = r^{\frac{q}{6}(1+o(1))}.$$
\end{cor}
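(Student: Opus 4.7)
\medskip

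\noindent\textbf{Proof proposal.} Both assertions follow essentially by assembling results already established earlier in the paper; there is no new analytic or algebraic input needed.

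\medskip

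For the first claim, the hypothesis $p\equiv 1\pmod 6$ puts us squarely in the first case of Corollary~\ref{cor:p-parts-via-L}, which asserts that $\Reg(E)=1$ and $\ord(|\sha(E)|)=0$. Since the $p$-adic valuation of $|\sha(E)|$ vanishes, the $p$-primary part $\sha(E)[p^\infty]$ must be trivial, giving $|\sha(E)[p^\infty]|=1$.

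\medskip

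For the second claim, recall the definition
\[
\BS(E)=\frac{\log\bigl(\Reg(E)|\sha(E)|\bigr)}{\log H(E)}.
\]
Theorem~\ref{thm:BS} (proved without any restriction on $p\bmod 6$) gives $\lim_{q\to\infty}\BS(E)=1$. Under the standing hypothesis $p\equiv 1\pmod 6$, Proposition~\ref{prop:ord-L-p=1(6)}(4) (equivalently, the first part of Corollary~\ref{cor:p-parts-via-L}) yields $\Reg(E)=1$, so the Brauer--Siegel ratio simplifies to $\BS(E)=\log|\sha(E)|/\log H(E)$. Consequently,
\[
\frac{\log|\sha(E)|}{\log H(E)}=1+o(1)\qquad\text{as }q\to\infty,
\]
which is precisely the statement $|\sha(E)|\geq H(E)^{1+o(1)}$. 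Recalling from Section~\ref{ss:definitions} that $H(E)=r^{\lceil q/6\rceil}=r^{(q+5)/6}$ when $p\equiv 1\pmod 6$ (so $q\equiv 1\pmod 6$), we rewrite the right-hand side as $r^{(q/6)(1+o(1))}$ by absorbing the bounded additive correction into the $o(1)$ term.

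\medskip

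No step presents a genuine obstacle: the whole argument is a two-line consequence of Theorem~\ref{thm:BS} and Corollary~\ref{cor:p-parts-via-L}(1). The only point worth flagging is that the qualitative conclusion is striking---the archimedean size of $|\sha(E)|$ grows like $r^{q/6}$, yet $\sha(E)$ contains no $p$-torsion whatsoever, so the bulk of this growth must come from primes $\ell\neq p$. This is what the corollary highlights, but it requires nothing beyond combining the two cited results.
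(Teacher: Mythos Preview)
Your proof is correct and follows exactly the approach the paper intends: the corollary is stated immediately after Theorem~\ref{thm:BS} as ``a direct consequence of Corollary~\ref{cor:p-parts-via-L}(1) and Theorem~\ref{thm:BS},'' and you have simply unpacked what that means. Your concluding remark about the prime-to-$p$ growth of $\sha(E)$ is a nice gloss on why the result is worth isolating.
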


\bibliography{database}

\end{document}